\theoremstyle{plain}%
\newtheorem{theorem}{Theorem}%
\newtheorem{proposition}{Proposition}%
\newtheorem{lemma}{Lemma}%
\theoremstyle{definition}%
\newtheorem{definition}{Definition}%
\newtheorem{example}{Example}%
\newtheorem{assumption}{Assumption}%
\theoremstyle{remark}%
\newtheorem{remark}{Remark}%
\newcommand{\bE}{\mathbb{E}}
\newcommand{\bN}{\mathbb{N}}
\newcommand{\bP}{\mathbb{P}}
\newcommand{\bR}{\mathbb{R}}
\newcommand{\bS}{\mathbb{S}}
\newcommand{\cA}{\mathcal{A}}
\newcommand{\cB}{\mathcal{B}}
\newcommand{\cI}{\mathcal{I}}
\newcommand{\cP}{\mathcal{P}}
\newcommand{\cR}{\mathcal{R}}
\newcommand{\fc}{\mathfrak{c}}
\newcommand{\rc}{\mathrm{c}}
\newcommand{\rd}{\mathrm{d}}
\newcommand{\+}[1]{{\bm{#1}}}
\newcommand{\dist}{\mathrm{dist}}
\newcommand{\eps}{\epsilon}
\newcommand{\id}{\mathrm{id}}
\newcommand{\inn}[1]{{\left\langle{#1}\right\rangle}}
\newcommand{\inns}[1]{{\langle{#1}\rangle}}
\newcommand{\norm}[1]{\left\|{#1}\right\|}
\newcommand{\od}[2]{\dfrac{\rd{#1}}{\rd{#2}}}
\newcommand{\pd}[2]{\dfrac{\partial{#1}}{\partial{#2}}}
\newcommand{\tpd}[2]{\tfrac{\partial{#1}}{\partial{#2}}}
\title{Natural Model Reduction for Kinetic Equations}
\author{Zeyu Jin \footnote{School of Mathematical Sciences, Peking University, Beijing 100871, China (\texttt{jinzy@pku.edu.cn}).} 
\and Ruo Li
\footnote{CAPT, LMAM and School of Mathematical Sciences, 
Peking University, Beijing 100871, China
(\texttt{rli@math.pku.edu.cn}).}}
\date{}
\begin{document}
  \maketitle

  \begin{abstract}
    A promising approach to investigating high-dimensional problems 
    is to identify their intrinsically low-dimensional features, 
    which can be achieved through recently developed techniques for 
    effective low-dimensional representation of functions such as 
    machine learning.
    Based on available finite-dimensional approximate 
    solution manifolds,
    this paper proposes a novel model reduction framework for kinetic 
    equations. The method employs projections 
    onto tangent bundles of approximate manifolds, naturally 
    resulting in first-order hyperbolic systems. 
    Under certain conditions on the approximate manifolds,
    the reduced models preserve several crucial properties, including 
    hyperbolicity,
    conservation laws, entropy dissipation, finite propagation 
    speed, and linear stability.
    For the first time, this paper rigorously discusses 
    the relation between the H-theorem of kinetic equations and 
    the linear stability conditions of reduced systems, 
    determining the choice of Riemannian metrics 
    involved in the model reduction.
    The framework is widely applicable for the model reduction of 
    many models in kinetic theory.
    \newline

    \noindent \textbf{Keywords}: 
    Model reduction; Kinetic equations; Hyperbolicity; 
    Linear stability; H-theorem
    \newline

    \noindent \textbf{MSC Classification}:  
    76P05; 82C03; 35F20
  \end{abstract}

\section{Introduction}
Kinetic equations \cite{Villanireview} have wide-ranging applications 
in various areas of science and engineering, such as in the study of 
rarefied gas dynamics \cite{Grad}, plasma physics \cite{Balescu}, 
radiation astronomy \cite{mihalas}, 
neuroscience \cite{cai2004effective}, 
and population dynamics \cite{ramkrishna2000population}. 
These equations describe the evolution of the distribution functions 
of a large number of particles or agents interacting with each other 
through various mechanisms. 
However, their high-dimensional nature can pose significant 
computational challenges in solving them.
Therefore, it is often necessary to reduce their complexity to gain 
insight into the underlying physical phenomena and to perform 
numerical simulations. 

Numerous approaches have been developed for the simplification, 
reduction, and numerical approximation of kinetic equations. 
For instance, in the context of the 
Boltzmann equation \cite{Boltzmann,Cercignani}, 
a range of models, including 
Bhatnagar--Gross--Krook (BGK) \cite{BGK}, 
Shakhov \cite{Shakhov}, and 
Ellipsoidal BGK (ES-BGK) models \cite{Holway}, have been 
proposed to simplify the complicated collision term. Moreover, many  
reduced models and numerical methods 
for kinetic equations are based on an ansatz
or a priori knowledge of the distribution functions.
For instance, Levermore's moment models \cite{Levermore} suppose
that the distribution functions minimize the entropy under the moment 
constraints, Hermite spectral methods 
\cite{Hu2020Numerical,QuadraticCol} employ an ansatz characterized by 
a global Maxwellian multiplied by a polynomial, while globally 
hyperbolic moment equations (HME) \cite{Fan_new}, roughly speaking, 
utilize an ansatz of a local Maxwellian times a polynomial. 
Discrete velocity methods (DVM) rely on an ansatz supported on 
discrete microscopic velocity points, 
quadrature methods of moments (QMOM) \cite{McGraw} employ a linear 
combination of Dirac delta functions, and 
extended quadrature methods of moments (EQMOM) 
\cite{yuan2012extended} 
use Gaussian functions. These methods 
underscore the multifaceted efforts to reduce  
kinetic equations based on an ansatz of the distribution functions, 
setting the stage for exploring model reduction in this 
paper.

While the kinetic equations, and indeed many other 
problems, are formulated in high-dimensional
(resp. infinite-dimensional) spaces, the 
solutions to these equations often lie on 
low-dimensional (resp. finite-dimensional) 
manifolds. Firstly, the problems of interest may 
possess initial and boundary data on 
low-dimensional manifolds. Secondly, the potential dissipative nature 
of these equations tends to attract solutions toward low-dimensional 
manifolds \cite{Carr1981,jin2022}. 
Consequently, what appears as a
high-dimensional problem can possess essentially
low-dimensional characteristics. Furthermore, in practical terms, 
achieving a complete and detailed resolution of a high-dimensional 
function without any a priori knowledge is a formidable challenge in 
numerical simulations due to the notoriously difficult problem known 
as the curse of dimensionality.
Therefore, a promising approach to reducing the complexity of kinetic 
equations is to identify these 
intrinsically low-dimensional features, accompanied by the 
development of 
corresponding numerical methods and model reduction techniques
\cite{antoulas,Benner}.

Unfortunately, acquiring the 
structure of the inherently low-dimensional solution manifolds 
a priori is challenging. 
Nevertheless, 
it is feasible to construct low-dimensional approximate solution 
manifolds that can serve as an ansatz for the solutions. Such an 
ansatz can be formulated through several approaches, such as 
mathematically straightforward formulas and a priori 
knowledge based on physical insight. 
In particular, machine learning techniques provide effective 
low-dimensional expressions of high-dimensional functions
\cite{ghojogh2023elements,HanE2018,Lee2007,vidal2016}, 
which can be captured by learning the data of 
solutions generated from a specific low-dimensional set of input data.
Our primary focus in this 
study centers on developing reduced models, which preserve specific 
structural properties based on the given ansatz manifolds.

This paper introduces a novel framework for model reduction in the 
context of general kinetic equations from a geometric point of view
\cite{Lang1995}. Our approach involves a 
fundamental procedure: the projection from the tangent bundle 
of the solution manifold onto the tangent bundle of the ansatz 
manifold. To illustrate this concept, consider the following model 
problem without the convection term:
\begin{displaymath}
  \od{u}{t} = Q |_{u} \in T_u M, \quad u(0) = u_0 \in M,
\end{displaymath}
where $u : t \in \bR \mapsto u(t) \in M$ is the unknown function,
$M$ represents the solution manifold, which may be either finite- or 
infinite-dimensional, $T M$ denotes the tangent bundle of 
the manifold $M$, and $Q \in \Gamma(T M) : M \to T M$ 
represents a tangent vector 
field on $M$. To reduce this system, we introduce an ansatz manifold 
$\hat{M}$ as an embedded submanifold of $M$ via the embedding 
$i : \hat{M} \to M$. The objective is to confine the solution to the 
ansatz manifold $\hat{M}$, and to obtain the approximate solution
$\hat{u} : t \in \bR \mapsto \hat{u}(t) \in \hat{M}$.
Two issues arise: firstly, the 
initial value $u_0$ may not generally lie on $i(\hat{M})$; secondly,  
even if a solution $i(\hat{u})$ on the ansatz manifold 
$i(\hat{M})$ is obtained at some point, 
the right-hand term $Q|_{i(\hat{u})}$ may not 
necessarily belong to the space $i_* T_{\hat{u}} \hat{M}$, where 
$i_* : T \hat{M} \to T M$ is the pushforward of the embedding $i$. 
To address these problems, we need to introduce a projection 
$p : M \to \hat{M}$, such that $p \circ i$ is the identity
on $\hat{M}$, to project the initial value onto the ansatz manifold 
$\hat{M}$. 
The projection $p$ induces a pushforward 
$p_* : T M \to T \hat{M}$, which can project the right-hand term 
onto the tangent space $T_{\hat{u}} \hat{M}$ of the ansatz manifold 
$\hat{M}$. 
The reduced model can be presented as 
follows:
\begin{displaymath}
  \od{\hat{u}}{t} = \hat{Q} |_{\hat{u}} \in T_{\hat{u}} \hat{M}, 
  \quad \hat{u}(0) = p(u_0) \in \hat{M}, 
\end{displaymath}
where $\hat{Q} = p_* \circ Q \circ i \in \Gamma(T \hat{M}) : 
\hat{M} \to T \hat{M}$ 
represents a tangent vector field on $\hat{M}$.
Alternatively, when examining the dynamics on the solution manifold 
$M$, 
the reduced model can be represented as:
\begin{displaymath}
  \od{i(\hat{u})}{t} = i_* p_* Q|_{i(\hat{u})} \in T_{i(\hat{u})} M, 
  \quad i(\hat{u}(0)) = (i \circ p)(u_0) \in M.
\end{displaymath}
Here $i \circ p$ and $i_* p_*$ correspond to the (possibly nonlinear) 
projection from the manifold $M$ to its submanifold $\hat{M}$, and 
the linear projection from the tangent space 
$T_{i(\hat{u})} M$ onto its subspace $i_* T_{\hat{u}} \hat{M}$, 
respectively, with the property that $(i \circ p)^2 = i \circ p$ and 
$(i_* p_*)^2 = i_* p_*$. 

When it comes to the model reduction for kinetic equations, we 
project both the 
convection and collision terms onto the tangent space of the ansatz 
manifold. 
The reduced systems obtained using our framework are
first-order partial differential equations (PDEs) with the same 
structure as the kinetic equations.
Here, we only require $i_* p_*$ to be a projection between 
the tangent bundle but have yet to specify 
with respect to which inner product the 
projection is orthogonal. Once each point 
$u$ on the smooth manifold $M$ is associated with an inner product 
$g|_u$ on the tangent space $T_u M$ that varies smoothly with $u$, a 
Riemannian manifold $(M, g)$ is 
obtained. We find that the choice of the Riemannian metric 
$g$ depends on the hyperbolicity, which is crucial to ensure the 
well-posedness of the equations. Furthermore, we 
express several other crucial structural properties  
in geometric terms within the unified framework for kinetic 
equations and reduced models, 
including conservation laws, entropy dissipation, 
finite propagation speed, and linear stability conditions. 
We prove that model reduction preserves 
these structural properties under certain conditions on the ansatz 
manifold and the Riemannian metric.

With the structure-preserving properties of model reduction, studying 
the properties of the reduced models reduces to studying the kinetic 
equations. In particular, we investigate when kinetic equations 
possess properties of generalized hyperbolicity and linear stability, 
and thus, reduced models can inherit these properties. We find that 
these properties are related to the choice of the Riemannian metric. 
We also study conservation laws and entropy dissipation in the 
context of kinetic equations.

Firstly, we give an equivalent characterization of the 
Riemannian metric that makes the kinetic equations hyperbolic. A 
necessary and sufficient condition is that the Riemannian metric is 
an $L^2$ inner product, which means that the Riemannian metric 
involved in the model reduction should not contain any information 
about the derivatives w.r.t. the ordinate variables.

Secondly, we discuss rigorously the 
relation between the H-theorem of the kinetic equations and the 
structural stability conditions of the reduced models for the first 
time. The structural stability conditions proposed by 
\cite{yong1999singular} are another crucial property of the reduced 
models. They characterize the dissipation properties and ensure 
well-posedness under zero relaxation limits. It is widely believed 
that these conditions for the reduced models are a proper counterpart 
of the H-theorem for the kinetic equations 
\cite{Di2017stability,Huang2022stability,Huang2020stability,
zhang2023stability,Zhao2017stability}, and numerous works aim to 
verify these conditions for various reduced models. However, there 
has never been a rigorous discussion on the relation between 
these two properties in the literature. Applying the novel framework, 
we can prove that, under a natural requirement on the Riemannian 
metric determined by the entropy, a non-degenerate version of 
H-theorem (resp. H-theorem) for kinetic equations yields linear 
stability conditions (resp. weak linear stability conditions), which 
the model reduction can preserve.
We also prove a converse result that a stronger version of linear 
stability conditions can yield the H-theorem in some sense.

Thirdly, we give an equivalent and verifiable characterization to 
determine whether a quantity has a flux. 
Roughly speaking, the meaning of this characterization is that the 
Hessian operator of this quantity in the ordinate representation 
is analogous to a diagonal matrix. 
This result can help to rule 
out a specific quantity as a candidate for a conserved quantity or 
an entropy when studying a new kinetic equation.

The rest of this paper is structured as follows. 
\Cref{sec:framework} establishes an abstract framework for 
first-order PDEs that take values on 
Riemannian manifolds, unifying discussions on kinetic equations and 
the reduced models. 
\Cref{sec:reduction} presents our natural model reduction framework 
for these PDEs, including the projection onto tangent spaces, 
structure-preserving properties, and explicit coordinate 
representations of reduced models. 
\Cref{sec:kinetic} applies our framework to kinetic equations in a 
general form, which, in particular, provides equivalent 
characterizations to ensure hyperbolicity and quantities with fluxes, 
a rigorous discussion on the relation between H-theorem and 
structural stability conditions, and 
a posteriori error estimate. 
Several examples are presented to illustrate our framework. 
Finally, the paper ends with a summary and concluding remarks 
in \Cref{sec:concl}.

\section{The unified framework}
\label{sec:framework}
This section presents an abstract theory of our framework, 
which is suitable for our novel model reduction method.
To maximize the generality of our statement, we consider first-order 
PDEs that take values on a Riemannian manifold $(M, g)$,
with source terms, where the manifold $M$ may 
be either finite- or infinite-dimensional, and the Riemannian metric 
$g$ is related to the hyperbolicity and linear stability. 
This methodology, which integrates kinetic equations and 
traditional first-order PDEs into a unified framework, enables a 
comprehensive and systematic approach to studying the 
structure-preserving properties of model reduction methods.

\subsection{First-order PDEs taking values on a Riemannian manifold}
Consider the first-order PDE taking values on a 
Riemannian manifold $(M, g)$ as follows,
\begin{equation}\label{eq:1stonM}
  \begin{aligned}
    & \pd{u}{t} + A^j \pd{u}{x^j} = Q |_{u}, \\
    & u(0, \cdot) = u_0.
  \end{aligned}
\end{equation}
Here, Einstein notation is adopted. 
The time variable is $t \in J$, where $J$ is an open interval 
on $\bR$ containing $0$, 
the spatial coordinates are $\+x = (x^1, x^2, \ldots, x^d) \in \bR^d$,
and $u : J \times \bR^d \to M$ is an unknown function of $(t, \+x)$ 
taking values on the \textit{solution manifold} $M$,
which may be either finite- or infinite-dimensional.
We denote by $A^j \in \Gamma(T^{1,1} M) : T M \to T M$ 
a $(1,1)$-tensor field on $M$ for each $j = 1, 2, \ldots, d$, and by
$Q \in \Gamma(T M) : M \to T M$ a tangent vector field on $M$.
Roughly speaking, when the manifold $M$ is finite-dimensional, 
\cref{eq:1stonM} represents a first-order PDE in the classical 
sense, whereas, in the infinite-dimensional case, 
it represents kinetic 
equations such as the Boltzmann equation. 
We will demonstrate how the framework can be applied to 
the kinetic equations in \Cref{sec:kinetic}.

In many practical cases, the spatial variable is  
$\+x \in \Omega$, where $\Omega$ is a domain in $\bR^d$.
This paper focuses exclusively on the model reduction for Cauchy 
problems, i.e., when $\Omega = \bR^d$.

\subsection{Structural properties}
\label{sec:framework_structure}
We will discuss several structural properties of 
\cref{eq:1stonM}, including hyperbolicity, conservation laws, 
entropy dissipation, finite propagation speed, and linear stability.

\subsubsection{Hyperbolicity}
When the manifold $M$ is finite-dimensional, \cref{eq:1stonM} reduces 
to a first-order quasi-linear PDE in the 
classical sense. For such equations, the concept of 
hyperbolicity plays a crucial role in ensuring the well-posedness. 
For symmetric hyperbolic systems, there are abundant results on the 
local existence and uniqueness of the solution for both initial value 
and initial-boundary value problems 
\cite{Chen, Kato, Secchi, Trebeschi}. 
We now propose a new definition of hyperbolicity for 
\cref{eq:1stonM} w.r.t. the Riemannian metric $g$, which 
generalizes the definition of symmetrizable hyperbolic systems and 
is  
suitable for studying model reduction methods for kinetic equations. 

\begin{definition}[Hyperbolicity w.r.t. a Riemannian metric]
  \label{def:hyperbolicity}
  We say that \cref{eq:1stonM} is \textit{hyperbolic} w.r.t. the 
  Riemannian metric $g$ if the $(1, 1)$-tensor field $A^j$ is 
  symmetric w.r.t. $g$, i.e.,
  \begin{equation}\label{eq:hyperbolic}
    g(A^j X, Y) = g(X, A^j Y), 
  \end{equation}
  for each $X, Y \in \Gamma(T M)$ and each $j = 1, 2, \ldots, d$.
\end{definition}

The novel definition of hyperbolicity generalizes the conventional 
definition of first-order symmetrizable hyperbolic systems, as 
illustrated by the following example.

\begin{example}
  Suppose that the manifold $M$ is an open set in $\bR^n$ and that 
  \cref{eq:1stonM} is \textit{hyperbolic} w.r.t. the Riemannian 
  metric $g$.
  For each $u \in M$ and each $j = 1, 2, \ldots, d$, the operator 
  $A^j |_{u}$ is an $n \times n$ matrix. By definition, there exists 
  a $(1, 1)$-tensor field $A^0 \in \Gamma(T^{1,1} M)$ such that 
  $A^0 |_{u}$ is a symmetric and positive definite $n \times n$ 
  matrix for each $u \in M$ and that 
  $g(X, Y) |_{u} = \inn{A^0 X, Y} |_{u}$
  for any $X, Y \in \Gamma(T M)$. Thus, the hyperbolicity of 
  \cref{eq:1stonM} w.r.t. $g$ is equivalent to 
  that $A^0 A^j = (A^j)^\top A^0$ for each $j = 1, 2, \ldots, d$.
  This statement is exactly the conventional definition of first-order
  symmetrizable hyperbolic systems \cite{evans}, 
  and here the matrix $A^0 |_u$ 
  is the \textit{symmetrizer} of \cref{eq:1stonM}.
\end{example}

\begin{remark}\label{remark:hyperbolicity_dense}
  When the tangent space $T_u M$ is an infinite-dimensional Hilbert 
  space equipped with 
  the inner product $g|_u$ for each $u \in M$, the operator 
  $A^j |_u : T_u M \to T_u M$ is typically unbounded and densely 
  defined. In this case, when we refer to $A^j X|_u$ where 
  $X|_u \in T_u M$ is the tangent vector, as in \cref{eq:hyperbolic}, 
  it should be agreed that the tangent vector $X|_u$ lies in the 
  domain $D(A^j|_u)$ 
  of the operator $A^j|_u$.
\end{remark}

\subsubsection{Conservation laws and entropy dissipation}
\label{sec:framework_flux}
Conserved quantities and entropy are among the significant concerns
in kinetic theory \cite{Villanireview}. 
It is also important to consider whether these 
properties are preserved in reduced models. 
The first requirement to make a quantity either conserved or monotone
is that such a quantity has a flux.
In light of this, let us first introduce the 
concept of flux for \cref{eq:1stonM}.

\begin{definition}[Flux]\label{def:flux}
  Let $c : M \to \bR$ and $\+F_{c} = 
  (F_{c}^1, F_{c}^2\ldots, F_{c}^d) : M \to \bR^d$
  be smooth functions.
  We say that $\+F_{c}$ is the \textit{flux} of $c$ for 
  \cref{eq:1stonM} if the following equality holds,
  \begin{displaymath}
    \rd F_{c}^j = C (\rd c \otimes A^j) \in \Gamma(T^* M),
  \end{displaymath}
  for each $j = 1, 2, \ldots, d$, where $C$ denotes the tensor 
  contraction.
\end{definition}

\Cref{def:flux} is equivalent to the statement that 
$\inn{\rd F_c^j, X} = \inn{\rd c, A^j X}$ 
for each $X \in \Gamma(T M)$.
We define conserved quantities and entropy for \cref{eq:1stonM} 
as follows.

\begin{definition}[Conserved quantities and entropy]
  \label{def:cqe}
  Let $\+F_{c}$ be the flux of $c$ for \cref{eq:1stonM}.
  We say that $c$ is a \textit{conserved quantity} if 
  $\inn{\rd c, Q} = 0$; 
  and that $c$ is an \textit{entropy} if $\inn{\rd c, Q} \le 0$.
\end{definition}

\begin{remark}\label{rmk:flux}
  If $\+F_{c}$ is the flux of $c$ for \cref{eq:1stonM}, then
  \begin{displaymath}
    \pd{c}{t} + \nabla_{\+x} \cdot \+F_{c} = 
      \inn{\rd c, \pd{u}{t}} + 
      \inn{\rd F_{c}^j, \pd{u}{x^j}} = 
      \inn{\rd c, \pd{u}{t} + A^j \pd{u}{x^j}} = 
      \inn{\rd c, Q} |_{u},
  \end{displaymath}
  where $c \circ u$ and $\+F_{c} \circ u$ are abbreviated as 
  $c$ and $\+F_{c}$, respectively.
  By integrating over $\bR^d$, one obtains that 
  \begin{displaymath}
    \od{}{t} \int_{\bR^d} c \,\rd \+x = 
      \int_{\bR^d} \inn{\rd c, Q} |_{u} \,\rd \+x,
  \end{displaymath}
  assuming the sufficiently fast decay of $\+F_c \circ u$ for the 
  solution $u$ to \cref{eq:1stonM} at infinity.
  Therefore, if $c$ is a conserved quantity or an entropy, 
  the quantity $\int_{\bR^d} c \,\rd \+x$ is conserved or 
  monotonically decreasing, respectively.
  In addition, if there exists a smooth injective function 
  $\+c : M \to \bR^n$
  such that each of its components is a conserved quantity, then 
  \cref{eq:1stonM} is a \textit{hyperbolic system of balanced laws}.
\end{remark}

\begin{remark}
  Our current discussion focuses not strictly on entropy but 
  rather on monotone quantities since \Cref{def:cqe} does not
  consider an essential property of entropy, namely convexity, 
  which is crucial to establishing linear stability conditions. 
  We intend to explore this issue further in 
  \Cref{sec:framework_linear,sec:kinetic_linear}.
\end{remark}

\subsubsection{Finite propagation speed}
Certain kinetic equations, particularly those incorporating 
relativistic effects, exhibit a notable property that their 
propagation speed is finite. 
Examples of such equations include the radiative transfer equation
\cite{chandrasekhar} and the relativistic Boltzmann equation
\cite{relativistic}, both of which uphold the 
constraint of the propagation speed remaining within the 
speed of light. 
When constructing reduced models, it is desirable that 
the propagation speed of these models also respects this condition. 

Let us introduce the property of finite 
propagation speed for \cref{eq:1stonM}.
Suppose that the tangent space $T_u M$ constitutes a Hilbert space 
equipped with the inner product $g|_{u}$ for each $u \in M$ and that
\cref{eq:1stonM} is hyperbolic w.r.t. the Riemannian metric $g$.
These assumptions ensure the symmetry of the operator 
$A^j |_{u} : T_u M \to T_u M$ for each $j = 1, 2, \ldots, d$. 
For the sake of simplicity, suppose that the operator $A^j |_{u}$ is 
self-adjoint. Therefore, the spectrum of the operator 
$\+\sigma \cdot \+A |_{u} := \sigma_j A^j |_{u}$ is confined to a 
subset of $\bR$ for each 
$\+\sigma = (\sigma_j)_{j = 1}^d \in \bS^{d - 1} \subset \bR^d$. 
Denote the spectrum radius of $\+\sigma \cdot \+A |_{u}$ by 
$\rho(\+\sigma \cdot \+A |_{u})$. 

\begin{definition}[Finite propagation speed]
  The \textit{maximum propagation speed} of \cref{eq:1stonM} at 
  the point $u \in M$ is defined as $\rho(\+A|_{u}) := 
  \sup_{\+\sigma \in \bS^{d - 1}} \rho(\+\sigma \cdot \+A |_{u})$. 
  We say that \cref{eq:1stonM} has \textit{finite propagation speed}
  at $u \in M$ if $\rho(\+A|_{u}) < +\infty$.
\end{definition}

\begin{remark}
  If the manifold $M$ is finite-dimensional, the propagation 
  speed is always finite at a fixed point $u \in M$.
  A good reduced model is expected to have a propagation speed that 
  does not exceed the propagation speed of the original kinetic 
  equations.
\end{remark}

\subsubsection{Linear stability}
\label{sec:framework_linear}
Structural stability conditions \cite{yong1999singular,yong2001basic} 
play a fundamental role in establishing the linear stability of the 
equilibrium \cite{Cercignani} and ensuring well-posedness under the 
relaxation limit \cite{yong1999singular} for first-order hyperbolic 
relaxation systems. These conditions serve as the counterpart of the 
H-theorem for kinetic equations.
Let us introduce several new linear stability 
conditions for \cref{eq:1stonM} within the abstract framework 
to investigate the property of model reduction on preserving linear 
stability conditions and to discuss the relation between these  
conditions and H-theorem.
These conditions extend the scope of structural stability conditions 
originally designed for first-order hyperbolic relaxation systems.

The equilibrium state of \cref{eq:1stonM} is particularly interesting 
as it characterizes the continuum limit of \cref{eq:1stonM}. 
Before presenting the new linear stability conditions, we need to 
make some necessary assumptions on the equilibrium state of 
\cref{eq:1stonM}.

\begin{assumption}
  \label{assump:equilibrium}
  \Cref{eq:1stonM} satisfies the following assumptions:
  \begin{itemize}
    \item The equilibrium set $\tilde{M}$ defined by 
    \begin{equation}\label{eq:tildeM}
      \tilde{M} := \left\{ u \in M \,\big|\, 
        Q |_{u} = 0 \in T_u M \right\}
    \end{equation}
    is a smooth embedded submanifold of the manifold $M$. 
    We call the manifold $\tilde{M}$ the 
    \textit{equilibrium manifold}.
    The embedding mapping here is denoted by 
    $\iota : \tilde{M} \rightarrow M$.
    \item There exists a projection $\pi : U \rightarrow \tilde{M}$,
    which is defined on an open set $U \subset M$ containing 
    the equilibrium manifold $\tilde{M}$,
    such that $\pi \circ \iota = \id_{\tilde{M}}$ and that 
    \begin{equation}\label{eq:proj_require}
      g(X, \iota_* \tilde{Y}) |_{\iota(\tilde{u})} = 
        \tilde{g}(\pi_* X, \tilde{Y}) |_{\tilde{u}},
    \end{equation}
    for each $\tilde{u} \in \tilde{M}$, $X \in \Gamma(T M)$ and
    $\tilde{Y} \in \Gamma(T \tilde{M})$, 
    where the Riemannian metric $\tilde{g} := \iota^* g$ on 
    $\tilde{M}$ is defined as the pullback of $g$, i.e.,
    \begin{displaymath}
      \tilde{g}(\tilde{X}, \tilde{Y}) := 
      g(\iota_* \tilde{X}, \iota_* \tilde{Y}), 
    \end{displaymath}
    for each $\tilde{X}, \tilde{Y} \in \Gamma(T \tilde{M})$.
  \end{itemize}
\end{assumption}

\begin{example}
  To illustrate the meaning of \Cref{assump:equilibrium}, 
  let us consider the Boltzmann equation with two-body 
  collision term,
  \begin{equation}\label{eq:boltzmann}
    \pd{f}{t} + \+\xi \cdot \nabla_{\+x} f = Q[f], 
  \end{equation}
  where $\+\xi \in \+\Xi = \bR^d$ with $d = 3$,
  \begin{equation}\label{eq:boltzmann_twobody}
    Q[f] = \int_{\bR^d} \int_{S^{d-1}} 
    B(|\+\xi - \+\xi_*|, \cos \chi) 
    \big( f(\+\xi') f(\+\xi_*') - f(\+\xi) f(\+\xi_*) \big)
    \,\rd \+\sigma \,\rd \+\xi_*,
  \end{equation}
  the function $B = B(|\+\xi - \+\xi_*|, \cos \chi)$ is the Boltzmann 
  collision kernel, and
  \begin{equation}\label{eq:collision}
    \+\xi' := \frac{\+\xi + \+\xi_*}{2} + \frac{|\+\xi - \+\xi_*|}{2} 
    \+\sigma, \quad \+\xi_*' := 
    \frac{\+\xi + \+\xi_*}{2} - \frac{|\+\xi - \+\xi_*|}{2} \+\sigma, 
    \quad 
    \cos \chi := \+\sigma \cdot 
    \frac{\+\xi - \+\xi_*}{|\+\xi - \+\xi_*|}.
  \end{equation}
  The correspondence between the Boltzmann equation 
  \eqref{eq:boltzmann} and \cref{eq:1stonM} in the 
  abstract form is discussed in \Cref{sec:kinetic_general}.
  The submanifold $\tilde{M}$ in the first assumption is a 
  generalization of the manifold of Maxwellian distributions, which  
  represent the equilibrium states and play a crucial role in 
  understanding the macroscopic behavior of a gas. 
  The Maxwellian $f \in \tilde{M}$ can be expressed as 
  follows,
  \begin{displaymath}
    f(\+\xi) = \frac{\rho}{(2 \pi \theta)^{d / 2}} 
    \exp \left( - \frac{|\+\xi - \+u|^2}{2 \theta} \right),
  \end{displaymath}
  which is determined by several macroscopic quantities, 
  including 
  \begin{displaymath}
    \begin{aligned}
      \text{density: } \quad & 
      \rho := \int_{\bR^d} f(\+\xi) \,\rd \+\xi, \\
      \text{flow velocity: } \quad & 
      \+u := \frac{1}{\rho} \int_{\bR^d} \+\xi f(\+\xi) 
      \,\rd \+\xi, \\
      \text{temperature: } \quad & 
      \theta := \frac{1}{d \rho} \int_{\bR^d} |\+\xi - \+u|^2 
      f(\+\xi) \,\rd \+\xi.
    \end{aligned}
  \end{displaymath}
  It is natural to define a projection from the general distribution 
  function to the Maxwellian that preserves these macroscopic 
  quantities. 
  The projection $\pi$ introduced in the second assumption 
  generalizes this idea.
  We will investigate this point in detail later 
  in \Cref{thm:HassumpEqui}.
\end{example}

\begin{remark}\label{rmk:proj}
  According to the first item of \Cref{assump:equilibrium},
  the mapping $\iota$ yields a diffeomorphism 
  between $\tilde{M}$ and $\iota(\tilde{M})$ and an injective 
  pushforward mapping $\iota_* |_{\tilde{u}} : 
  T_{\tilde{u}} \tilde{M} \to T_{\iota(\tilde{u})} M$ at each point 
  $\tilde{u} \in \tilde{M}$.
  \Cref{eq:proj_require} in the second item of 
  \Cref{assump:equilibrium} is equivalent to 
  \begin{displaymath}
    g(X - \iota_* \pi_* X, \iota_* \pi_* Y) |_{\iota(\tilde{u})} = 0,
  \end{displaymath}
  for each $\tilde{u} \in \tilde{M}$ and $X, Y \in \Gamma(T M)$,
  which is equivalent to the statement that the 
  mapping $\iota_* \pi_* |_{\iota(\tilde{u})}$ is an orthogonal 
  projection from $T_{\iota(\tilde{u})} M$ to 
  $\iota_* T_{\tilde{u}} \tilde{M}$
  w.r.t. the inner product $g |_{\iota(\tilde{u})}$.
  Given a Riemannian metric $g$ on the manifold $M$,
  a projection $\pi$ can be constructed as follows,
  \begin{equation}\label{eq:RProj}
    \pi(u) \in \mathop{\arg \min}_{\tilde{u} \in \tilde{M}} \,
    \dist_g(u, \iota(\tilde{u})), \quad u \in M,
  \end{equation}
  which is uniquely determined and smooth near each 
  $\iota(\tilde{u}) \in M$ for each $\tilde{u} \in \tilde{M}$. Here, 
  $\dist_g$ is the distance on $M$ induced by the Riemannian metric 
  $g$. However, the projections $\pi$ satisfying 
  \cref{eq:proj_require} are not necessarily 
  unique; see \Cref{rmk:diff_g}.
\end{remark}

We can now present our definitions of generalized 
stability conditions, which will be shown later as generalizations 
of Yong's structural stability conditions 
\cite{yong1999singular,yong2001basic}.

\begin{definition}
  We say that \cref{eq:1stonM} satisfies the 
  \textit{generalized stability condition} (GSC) on $(M, g)$ 
  if it satisfies the following requirements:
  \begin{enumerate}[label=(\roman*),ref=(\roman*)]
    \item \label{item:stability_1}
    For each $\tilde{u} \in \tilde{M}$ and $X \in \Gamma(T M)$, 
    \begin{align}
      \label{eq:stability_1_1}
      D_{\iota_* \pi_* X} Q |_{\iota(\tilde{u})} = 0, \\ 
      \label{eq:stability_1_2}
      \iota_* \pi_* D_X Q |_{\iota(\tilde{u})} = 0,
    \end{align}
    where $D$ is the Levi-Civita connection on $(M, g)$;
    \item \label{item:stability_2}
    For each $X, Y \in \Gamma(T M)$ and $j = 1, 2, \ldots, d$,
    \begin{displaymath}
      g(A^j X, Y) = g(X, A^j Y);
    \end{displaymath}
    \item \label{item:stability_3}
    For each 
    $\tilde{u} \in \tilde{M}$ and $X \in \Gamma(T M)$ satisfying 
    that $X |_{\iota(\tilde{u})} \ne 0$ and 
    $\iota_* \pi_* X |_{\iota(\tilde{u})} = 0$,
    \begin{displaymath}
      g(D_X Q, X) |_{\iota(\tilde{u})} < 0.
    \end{displaymath}
  \end{enumerate}
\end{definition}

\begin{definition}
  We say that \cref{eq:1stonM} satisfies the 
  \textit{generalized uniform stability condition} (GUSC) 
  on $(M, g)$ if it satisfies 
  \ref{item:stability_1}, \ref{item:stability_2}, and 
  \begin{enumerate}[label=(\roman*),ref=(\roman*)]
    \setcounter{enumi}{3}
    \item \label{item:uniform_stability}
    For each $\tilde{u} \in \tilde{M}$ and each $X \in \Gamma(T M)$,
    there exists a constant $\lambda > 0$ such that
    \begin{equation}\label{eq:stability_3}
      g(D_X Q, X) |_{\iota(\tilde{u})} \le - \lambda 
      \norm{X - \iota_* \pi_* X}_g^2 \big|_{\iota(\tilde{u})}.
    \end{equation}
  \end{enumerate}
\end{definition}

\begin{definition}
  We say that \cref{eq:1stonM} satisfies the 
  \textit{generalized weak stability condition} (GWSC) on $(M, g)$
  if it satisfies \ref{item:stability_1}, \ref{item:stability_2}. and 
  \begin{enumerate}[label=(\roman*),ref=(\roman*)]
    \setcounter{enumi}{4}
    \item \label{item:weak_stability}
    for each $\tilde{u} \in \tilde{M}$ and each $X \in \Gamma(T M)$, 
    \begin{displaymath}
      g(D_X Q, X) |_{\iota(\tilde{u})} \le 0.
    \end{displaymath}
  \end{enumerate}
\end{definition}

\begin{remark}\label{rmk:stability_1_1}
  \Cref{eq:stability_1_1} is unconditionally satisfied 
  thanks to \Cref{assump:equilibrium}.
\end{remark}

\begin{remark}\label{rmk:diff_g}
  Here is a simple fact: for each $X, Y \in \Gamma(T M)$, if the 
  tangent vector $Y|_u = 0$ for some $u \in M$, then the covariant 
  derivative $D_X Y |_u$ is independent of the choice of the 
  Riemannian metric $g$. 
  Given a function 
  $F \in C^\infty(M)$ satisfying that $F(u) > 0$ for each $u \in M$, 
  define a new Riemannian metric $g_F$ as follows:
  \begin{displaymath}
    g_F(X, Y) |_u := F(u) g(X, Y) |_u,
  \end{displaymath}
  for each $u \in M$ and $X, Y \in \Gamma(T M)$. If 
  \cref{eq:1stonM} satisfies GSC (resp. GUSC or GWSC) on $(M, g)$,
  then it also satisfies GSC (resp. GUSC or GWSC) on $(M, g_F)$.
  In addition, different Riemannian metrics $g$ on the manifold $M$ 
  induce different projections $\pi$ by \cref{eq:RProj}, 
  which leads to the non-uniqueness of projections $\pi$ in 
  \Cref{rmk:proj}.
\end{remark}

\begin{lemma}\label{lemma:finiteD}
  In general, the following relations hold:
  \begin{displaymath}
    \text{GUSC} \Longrightarrow \text{GSC} 
    \Longrightarrow \text{GWSC}.
  \end{displaymath}
  Furthermore, if the manifold $M$ is finite-dimensional, 
  then GSC is equivalent to GUSC.
\end{lemma}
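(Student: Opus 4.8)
The plan is to observe that all three conditions GSC, GUSC, GWSC share items \ref{item:stability_1} and \ref{item:stability_2}, so the content of each implication lies entirely in comparing the three sign/coercivity requirements \ref{item:stability_3}, \ref{item:uniform_stability}, and \ref{item:weak_stability} imposed on the quadratic form $X \mapsto g(D_X Q, X)|_{\iota(\tilde{u})}$. The engine of the whole argument is a single reduction identity. Writing $X_\perp := X - \iota_* \pi_* X$, I would first establish that
\begin{equation*}
  g(D_X Q, X)|_{\iota(\tilde{u})} = g(D_{X_\perp} Q, X_\perp)|_{\iota(\tilde{u})}
\end{equation*}
for every $X \in \Gamma(T M)$ and $\tilde{u} \in \tilde{M}$. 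To prove it I decompose $X = \iota_* \pi_* X + X_\perp$ and use \cref{eq:stability_1_1} to kill $D_{\iota_* \pi_* X} Q = 0$, so that $D_X Q = D_{X_\perp} Q$; then, pairing with $\iota_* \pi_* X$ and using that $\iota_* \pi_*|_{\iota(\tilde{u})}$ is the $g$-orthogonal (hence self-adjoint, idempotent) projection onto $\iota_* T_{\tilde{u}} \tilde{M}$ established in \Cref{rmk:proj}, together with \cref{eq:stability_1_2} giving $\iota_* \pi_* D_{X_\perp} Q = 0$, the cross term $g(D_{X_\perp} Q, \iota_* \pi_* X)$ vanishes. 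Note also that $\iota_* \pi_* X_\perp = 0$ by idempotency, so $X_\perp$ always lies in the kernel $N := \ker(\iota_* \pi_*|_{\iota(\tilde{u})}) = (\iota_* T_{\tilde{u}} \tilde{M})^{\perp}$, which is exactly the subspace on which \ref{item:stability_3} provides information.

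Granting this identity, the two routine implications are immediate. For GUSC $\Rightarrow$ GSC, take any $X$ with $X|_{\iota(\tilde{u})} \ne 0$ and $\iota_* \pi_* X|_{\iota(\tilde{u})} = 0$; then $X_\perp = X$ at the point, so $\norm{X - \iota_* \pi_* X}_g^2 = \norm{X}_g^2 > 0$, and \cref{eq:stability_3} forces $g(D_X Q, X) \le -\lambda \norm{X}_g^2 < 0$, which is \ref{item:stability_3}. For GSC $\Rightarrow$ GWSC, I apply the reduction identity: if $X_\perp|_{\iota(\tilde{u})} = 0$ then $g(D_X Q, X)|_{\iota(\tilde{u})} = 0$; otherwise $X_\perp|_{\iota(\tilde{u})} \ne 0$ satisfies $\iota_* \pi_* X_\perp = 0$, so \ref{item:stability_3} applies to $X_\perp$ and yields $g(D_{X_\perp} Q, X_\perp) < 0$. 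In either case $g(D_X Q, X)|_{\iota(\tilde{u})} \le 0$, which is \ref{item:weak_stability}.

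The substantive part is GSC $\Rightarrow$ GUSC when $\dim M < \infty$; here the point is to upgrade the pointwise strict negativity \ref{item:stability_3} to the uniform coercive bound \ref{item:uniform_stability}, i.e.\ to produce a single $\lambda > 0$ valid for all $X$ at a fixed $\tilde{u}$. By the reduction identity it suffices to bound $g(D_V Q, V) \le -\lambda \norm{V}_g^2$ for $V \in N$. I would consider the continuous quadratic form $V \mapsto g(D_V Q, V)$ restricted to the $g$-unit sphere of the finite-dimensional subspace $N$; this sphere is compact, and \ref{item:stability_3} says the form is strictly negative on it, so it attains a maximum $-\lambda$ with $\lambda > 0$. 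Homogeneity of degree two then gives $g(D_V Q, V) \le -\lambda \norm{V}_g^2$ for all $V \in N$, and setting $V = X_\perp$ recovers \ref{item:uniform_stability}.

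The main obstacle — and the sole place finite-dimensionality is used — is precisely this compactness step. In infinite dimensions the unit sphere of $N$ is not compact, the supremum of the quadratic form over it may equal $0$ without being attained, and no uniform $\lambda > 0$ need exist; this is exactly why the equivalence of GSC and GUSC holds only in the finite-dimensional setting, whereas the chain $\text{GUSC} \Rightarrow \text{GSC} \Rightarrow \text{GWSC}$ uses only the reduction identity and the orthogonal-projection structure and therefore holds in general.
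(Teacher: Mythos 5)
Your proof is correct and follows essentially the same route as the paper: the same reduction identity $g(D_X Q, X)|_{\iota(\tilde{u})} = g(D_{X - \iota_*\pi_* X}\, Q,\; X - \iota_*\pi_* X)|_{\iota(\tilde{u})}$, and the same compactness argument on the $g$-unit sphere of the kernel subspace for the finite-dimensional equivalence. The only difference is that you spell out why the identity holds (linearity in the direction via \cref{eq:stability_1_1}, and the cross term killed by \cref{eq:stability_1_2} together with self-adjointness of the orthogonal projection $\iota_*\pi_*$), a step the paper compresses into the phrase ``thanks to \ref{item:stability_1}.''
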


\begin{proof}
  If \cref{eq:1stonM} satisfies GUSC on $(M, g)$, then 
  for each $\tilde{u} \in \tilde{M}$ and $X \in \Gamma(T M)$ 
  satisfying that $X |_{\iota(\tilde{u})} \ne 0$ and 
  $\iota_* \pi_* X |_{\iota(\tilde{u})} = 0$, one has that 
  \begin{displaymath}
    g(D_X Q, X) |_{\iota(\tilde{u})} \le - \lambda 
    \norm{X - \iota_* \pi_* X }_g^2 \big|_{\iota(\tilde{u})} = 
    - \lambda \norm{X}_g^2 \big|_{\iota(\tilde{u})} < 0.
  \end{displaymath}
  Therefore, \cref{eq:1stonM} satisfies GSC.

  Suppose that \cref{eq:1stonM} satisfies GSC on $(M, g)$.
  For each $\tilde{u} \in \tilde{M}$, 
  define the linear space $V$ as follows,
  \begin{displaymath}
    V := \left\{ X|_{\iota(\tilde{u})} \in T_{\iota(\tilde{u})} M 
    \,\big|\, \iota_* \pi_* X |_{\iota(\tilde{u})} = 0 \right\}.
  \end{displaymath}
  For each $X \in \Gamma(T M)$, one has that 
  $(X - \iota_* \pi_* X) |_{\iota(\tilde{u})} \in V$, and thus,
  \begin{displaymath}
    g(D_X Q, X) |_{\iota(\tilde{u})} = 
      g(D_{X - \iota_* \pi_* X} Q, X - \iota_* \pi_* X)
      |_{\iota(\tilde{u})} \le 0,
  \end{displaymath}
  thanks to \ref{item:stability_1}.
  Furthermore, if the manifold $M$ is finite-dimensional, then 
  the linear space $V$ is 
  finite-dimensional. Define the set 
  $S := \left\{ X|_{\iota(\tilde{u})} \in V \,\big|\, 
  \|{X|_{\iota(\tilde{u})}}\|_g = 1 \right\}$. 
  Note that $S$ is compact in $V$,
  and $g(D_X Q, X) |_{\iota(\tilde{u})} < 0$ for each 
  $X|_{\iota(\tilde{u})} \in S$. Thus, there exists 
  $\lambda > 0$ such that 
  \begin{displaymath}
    g(D_X Q, X) |_{\iota(\tilde{u})} \le - \lambda, 
  \end{displaymath}
  for each $X|_{\iota(\tilde{u})} \in S$, 
  which yields that 
  \begin{displaymath}
    g(D_X Q, X) |_{\iota(\tilde{u})} \le - \lambda 
      \norm{X}_g^2 |_{\iota(\tilde{u})}, 
  \end{displaymath}
  for each $X|_{\iota(\tilde{u})} \in V$.
  Therefore, for each $X \in \Gamma(T M)$, one has that
  \begin{displaymath}
    g(D_X Q, X) |_{\iota(\tilde{u})} = 
      g(D_{X - \iota_* \pi_* X} Q, X - \iota_* \pi_* X)
      |_{\iota(\tilde{u})} \le - \lambda 
      \norm{X - \iota_* \pi_* X}_g^2 |_{\iota(\tilde{u})},
  \end{displaymath}
  which yields that \cref{eq:1stonM} satisfies GUSC.
\end{proof}

\begin{example}\label{exp:Yong}
  Suppose that the manifold $M$ is an open set in a Euclidean space 
  $V := \bR^n$. 
  A tangent vector field on $M$ can be seen as a mapping from $M$ to 
  $V$. For each $u \in M$, there exists a symmetric and positive 
  definite matrix $A^0|_u$ such that 
  \begin{displaymath}
    g(X, Y) |_{u} = X^\top A^0 Y |_{u},
  \end{displaymath}
  for each $X, Y \in \Gamma(T M)$. Let $r := \dim \tilde{M}$,
  and define $B := \left( \begin{smallmatrix}
    0 & 0 \\ 0 & I_r
  \end{smallmatrix} \right) \in \bR^{n \times n}$.
  For each $\tilde{u} \in \tilde{M}$, the mapping 
  $\iota_* \pi_* |_{\iota(\tilde{u})}$ is an orthogonal projection 
  w.r.t. the inner product $g |_{\iota(\tilde{u})}$ 
  from $T_{\iota(\tilde{u})} M$ to 
  $\iota_* T_{\tilde{u}} \tilde{M}$. Therefore, for each 
  $u \in \iota(\tilde{M})$, there exists an invertible matrix $P|_u$ 
  such that 
  \begin{displaymath}
    \iota_* \pi_* X |_{u} = P^{-1} (I - B) P X |_{u},
  \end{displaymath}
  satisfying that $B P^{-\top} A^0 P^{-1} (I - B)|_u = 0$.
  Note that $Q |_{u} = 0$ for $u \in \iota(\tilde{M})$, which yields 
  that $D_X Q |_{u} = Q_u X |_{u}$, 
  where $Q_u$ is the gradient of the vector-valued function $Q$ 
  w.r.t. $u$.
  \Cref{item:stability_1} is equivalent to 
  \begin{displaymath}
    Q_u P^{-1} (I - B) P |_{u} = 0, \quad 
    P^{-1} (I - B) P Q_u |_{u} = 0,
  \end{displaymath}
  for each $u \in \iota(\tilde{M})$, 
  which is equivalent to the statement that there exists 
  $S|_u \in \bR^{r \times r}$ such that 
  \begin{displaymath}
    P Q_u P^{-1} |_{u} = \begin{pmatrix}
      0 & 0 \\ 0 & S
    \end{pmatrix} \bigg|_{u}.
  \end{displaymath}
  \Cref{item:stability_2} is equivalent to 
  \begin{displaymath}
    A^0 A^j |_u = (A^j)^\top A^0 |_u, 
  \end{displaymath}
  for each $u \in M$ and $j = 1, 2, \ldots, d$.
  \Cref{item:uniform_stability} is equivalent to
  \begin{displaymath}
    g(D_X Q, X) |_{u} = \frac{1}{2} X^\top (A^0 Q_u + Q_u^\top A^0) X 
    |_{u} \lesssim - \|{P^{-1} B P X}\|_g^2 \big|_u \lesssim 
    - X^\top P^\top B P X |_u,
  \end{displaymath}
  for each $u \in \iota(\tilde{M})$ and $X \in \Gamma(T M)$,
  which is equivalent to 
  \begin{displaymath}
    (A^0 Q_u + Q_u^\top A^0) |_u \lesssim - P^\top B P |_u.
  \end{displaymath}
  Similarly, \Cref{item:weak_stability} is equivalent to 
  \begin{displaymath}
    (A^0 Q_u + Q_u^\top A^0) |_u \le 0,
  \end{displaymath}
  for each $u \in \iota(\tilde{M})$.
\end{example}

We summarize the discussions in \Cref{exp:Yong} and obtain the 
following theorem. The theorem shows that our generalized stability
conditions generalize Yong's first and second stability conditions; 
see \cite[p. 277--278]{yong2001basic}. Note that the first stability 
condition we mention here is modified and is weaker than the original 
version in \cite{yong2001basic} since GWSC does not require 
the matrix $S|_u$ to be stable.

\begin{theorem}
  If $M$ is an open set in a Euclidean space, the following 
  relations hold:
  \begin{displaymath}
    \begin{aligned}
      & \text{Yong's first stability condition} 
        \Longleftrightarrow \text{GWSC}, \\
      & \text{Yong's second stability condition} 
        \Longleftrightarrow \text{GSC} 
        \Longleftrightarrow \text{GUSC}.
    \end{aligned}
  \end{displaymath}
\end{theorem}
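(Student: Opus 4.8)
The plan is to read the theorem off directly from the matrix translations already carried out in \Cref{exp:Yong}, supplemented by \Cref{lemma:finiteD}. Since $M$ is an open set in a Euclidean space, it is finite-dimensional, so \Cref{lemma:finiteD} immediately yields $\text{GSC} \Longleftrightarrow \text{GUSC}$. This settles the right-hand equivalence, and it remains to identify GWSC with Yong's first stability condition and GUSC (equivalently GSC) with Yong's second.

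First I would recall the precise matrix form of Yong's conditions from \cite[p.~277--278]{yong2001basic}, which decompose into three ingredients: (a) a block-diagonalization of the linearized source at equilibrium, $P Q_u P^{-1} = \left(\begin{smallmatrix} 0 & 0 \\ 0 & S \end{smallmatrix}\right)$, with $S$ required stable in the original first condition, a requirement deliberately dropped here; (b) symmetrizability of the convection via a symmetric positive definite symmetrizer $A^0$, i.e. $A^0 A^j = (A^j)^\top A^0$; and (c) a dissipation inequality for $A^0 Q_u + Q_u^\top A^0$, non-strict ($\le 0$) for the first condition and negative definite transverse to the equilibrium manifold for the second.

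Then I would match these ingredient-by-ingredient with the characterizations obtained in \Cref{exp:Yong}: condition \ref{item:stability_1} is exactly the block structure (a); condition \ref{item:stability_2} is exactly the symmetrizability (b); condition \ref{item:weak_stability} is exactly $A^0 Q_u + Q_u^\top A^0 \le 0$, the weak dissipation in (c); and condition \ref{item:uniform_stability} is exactly $A^0 Q_u + Q_u^\top A^0 \lesssim - P^\top B P$, the strict transverse dissipation. Since GWSC consists of \ref{item:stability_1}, \ref{item:stability_2}, \ref{item:weak_stability} and GUSC of \ref{item:stability_1}, \ref{item:stability_2}, \ref{item:uniform_stability}, this gives $\text{GWSC} \Longleftrightarrow$ Yong's first condition and $\text{GUSC} \Longleftrightarrow$ Yong's second condition. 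Combined with $\text{GSC} \Longleftrightarrow \text{GUSC}$ from \Cref{lemma:finiteD}, the theorem follows.

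I expect the main obstacle to be essentially bookkeeping: aligning the exact algebraic shape of Yong's dissipation inequality with the right-hand side $- P^\top B P$ produced in \Cref{exp:Yong}, and confirming that the domain of validity (pointwise on $\iota(\tilde{M})$ versus on all of $M$) coincides in both formulations. A secondary point to check carefully is that the $\lesssim$ appearing in \ref{item:uniform_stability} — negative definiteness up to a positive constant on the transverse subspace — is genuinely equivalent to Yong's strict dissipation, and that dropping the stability of $S$ is precisely what turns Yong's original first condition into its weaker counterpart GWSC, so that the stated equivalence is with this modified version rather than the original one.
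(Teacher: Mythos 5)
Your proposal is correct and follows essentially the same route as the paper: the paper's proof is precisely to ``summarize the discussions in \Cref{exp:Yong}'' (matching \ref{item:stability_1}, \ref{item:stability_2}, \ref{item:uniform_stability}, \ref{item:weak_stability} with the matrix forms of Yong's conditions) together with \Cref{lemma:finiteD} for GSC $\Longleftrightarrow$ GUSC in finite dimensions. You also correctly flag the one subtlety the paper itself notes, namely that the equivalence with the first condition holds for the modified version in which the stability of the matrix $S|_u$ is dropped.
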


  \section{Natural model reduction}
\label{sec:reduction}
This section presents our novel natural model reduction method and 
proves that the reduced models preserve several crucial physical 
properties under certain conditions.
We also give an explicit expression of the reduced models in 
coordinate form.

\subsection{Motivations and methodologies}
Once applied to a specific scenario, the kinetic equation is 
equipped with prescribed initial and boundary values. We aim to 
solve such a problem with input data in a low-dimensional 
configuration space. The solution operator of the equation maps each 
point in the input data space to a corresponding solution function. 
Therefore, the solution functions of the equation with all
the input data form a low-dimensional manifold in the solution
space, provided that the solution operator is continuous. This 
observation motivates us to develop a strategy for reducing the 
kinetic equations to low-dimensional models.

Although it is known that the solution for low-dimensional input data 
is confined to a low-dimensional manifold, the explicit expression 
of such a manifold is often unknown a priori, posing a significant 
challenge in this procedure. It is generally 
believed that the functions on such a solution manifold cannot be 
expressed in elementary functions, and obtaining its explicit 
expression can be elusive. Despite 
these difficulties, it is often possible to obtain an approximate 
solution manifold through physical intuition, 
a priori knowledge of the structure of the solution functions, or the 
need for mathematical simplicity. In particular, machine learning 
techniques can also be employed to approximate the 
solution manifold. 

For our present purposes, suppose that an approximate 
solution is obtained through an ansatz, which postulates that the 
approximate solution lies on a submanifold $\hat{M}$ embedded in the 
Riemannian manifold $(M, g)$.
We call the manifold $\hat{M}$ the \textit{ansatz manifold} or 
the \textit{approximate manifold}.
We denote the embedding as 
$i : \hat{M} \to M$, which induces a
pushforward $i_* : T \hat{M} \to T M$, 
and a Riemannian metric on $\hat{M}$ defined as 
$\hat{g} := i^* g \in \Gamma(T^{0,2} \hat{M})$.
Notably, the manifold $\hat{M}$ is typically finite-dimensional.

To obtain a reduced model of \cref{eq:1stonM} with the unknown 
$\hat{u}: \hat{J} \times \bR^d \to \hat{M}$, 
one may attempt to plug $i \circ \hat{u}$ into \cref{eq:1stonM} 
directly, resulting in  
\begin{equation}\label{eq:reduced_wrong}
  \begin{aligned}
    & i_* \pd{\hat{u}}{t} + A^j i_* \pd{\hat{u}}{x^j} = 
      Q |_{i(\hat{u})}, \\
    & (i \circ \hat{u}) (0, \cdot) = u_0.
  \end{aligned}
\end{equation}
However, note that $i_* \tfrac{\partial \hat{u}}{\partial t}$ and 
$(i \circ \hat{u}) (0, \cdot)$ 
take values in $i_*(T \hat{M})$ and $i(\hat{M})$, respectively, while 
$Q |_{i(\hat{u})} - A^j i_* \tfrac{\partial \hat{u}}{\partial x^j}$ 
and $u_0$ may not in general. 
To resolve this issue, we need a projection $p : M \to \hat{M}$
such that $p \circ i$ is the identity on $\hat{M}$, which 
yields a pushforward $p_* : T M \to T \hat{M}$ 
and a pullback $p^* : T^* \hat{M} \to T^* M$.
Applying $p_*$ and $p$ to both sides of the first and second lines 
of \cref{eq:reduced_wrong}, respectively, yields the reduced 
model given by
\begin{equation}\label{eq:reduced}
  \begin{aligned}
    & \pd{\hat{u}}{t} + \hat{A}^j \pd{\hat{u}}{x^j} = 
      \hat{Q} |_{\hat{u}}, \\
    & \hat{u}(0, \cdot) = p \circ u_0,
  \end{aligned}
\end{equation}
where $\hat{A}^j := p_* \circ A^j \circ i_* \in 
\Gamma(T^{1,1} \hat{M}) : T \hat{M} \to T \hat{M}$ is a 
$(1, 1)$-tensor field on $\hat{M}$, and 
$\hat{Q} := p_* \circ Q \circ i \in \Gamma(T \hat{M}) : 
\hat{M} \to T \hat{M}$ is 
a tangent vector field on $\hat{M}$.

\subsection{Structure-preserving properties}
\label{sec:reduction_preserve}

The reduced model \eqref{eq:reduced} has the same form as the 
original equation \eqref{eq:1stonM}, indicating that the model 
reduction preserves the structure of the equations, 
which is a benefit 
of incorporating Riemannian geometry into the model reduction method. 
We can now investigate the kinetic equations and the 
reduced models within a unified framework.
We can further explore the structure-preserving properties of the 
method, such as hyperbolicity, conservation laws, entropy 
dissipation, finite propagation speed, and linear stability.

\subsubsection{Hyperbolicity}
We begin by examining the preservation of hyperbolicity of the model 
reduction method, summarized by the following result.

\begin{theorem}\label{thm:reduce_hyperbolicity}
  If $i_*$ and $p_*$ are adjoint mutually, i.e., if
  \begin{equation}\label{eq:reduce_hyperbolicity}
    \hat{g} (p_* X, \hat{Y}) |_{\hat{u}} = 
      g(X, i_* \hat{Y}) |_{i(\hat{u})},
  \end{equation}
  holds for each $\hat{u} \in \hat{M}$, $X \in \Gamma(T M)$ and 
  $\hat{Y} \in \Gamma(T \hat{M})$, then the model reduction 
  preserves hyperbolicity.
\end{theorem}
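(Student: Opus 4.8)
The statement ``the model reduction preserves hyperbolicity'' should be read as: if \cref{eq:1stonM} is hyperbolic w.r.t. $g$ in the sense of \Cref{def:hyperbolicity}, then the reduced model \eqref{eq:reduced} is hyperbolic w.r.t. the pulled-back metric $\hat{g} = i^* g$. Since $\hat{A}^j = p_* \circ A^j \circ i_*$ by construction, the goal is therefore to verify the single symmetry identity
\[
  \hat{g}(\hat{A}^j \hat{X}, \hat{Y}) = \hat{g}(\hat{X}, \hat{A}^j \hat{Y}),
\]
for all $\hat{X}, \hat{Y} \in \Gamma(T \hat{M})$ and each $j = 1, 2, \ldots, d$. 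The plan is to prove this by a short symbolic chain that alternates between the mutual-adjoint hypothesis \eqref{eq:reduce_hyperbolicity} and the assumed symmetry of $A^j$ w.r.t. $g$.

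First I would expand the left-hand side using $\hat{A}^j \hat{X} = p_* A^j i_* \hat{X}$ and apply \eqref{eq:reduce_hyperbolicity} with the $M$-vector field $X = A^j i_* \hat{X}$, which transports the pairing from $\hat{M}$ up to $M$:
\[
  \hat{g}(\hat{A}^j \hat{X}, \hat{Y}) |_{\hat{u}} = \hat{g}(p_* (A^j i_* \hat{X}), \hat{Y}) |_{\hat{u}} = g(A^j i_* \hat{X}, i_* \hat{Y}) |_{i(\hat{u})}.
\]
Next I would invoke the hyperbolicity of the original equation, i.e.\ \eqref{eq:hyperbolic} applied to the two tangent vectors $i_* \hat{X}, i_* \hat{Y} \in \Gamma(T M)$, to flip $A^j$ across the inner product:
\[
  g(A^j i_* \hat{X}, i_* \hat{Y}) |_{i(\hat{u})} = g(i_* \hat{X}, A^j i_* \hat{Y}) |_{i(\hat{u})}.
\]
Finally I would apply \eqref{eq:reduce_hyperbolicity} a second time, now with $X = A^j i_* \hat{Y}$, using the symmetry of the inner products $g$ and $\hat{g}$ to reverse their arguments, which brings the expression back down to $\hat{M}$ and identifies it as $\hat{g}(\hat{X}, \hat{A}^j \hat{Y})$. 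Concatenating the three equalities yields the desired symmetry of $\hat{A}^j$, hence the hyperbolicity of \eqref{eq:reduced}.

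I do not expect a serious obstacle here, as the argument is a three-step manipulation. The only points requiring care are the bookkeeping of the \emph{direction} in which \eqref{eq:reduce_hyperbolicity} is applied — it is an adjunction between $p_*$ and $i_*$, not a symmetry, so the two invocations use different free $M$-vector fields and rely on the symmetry of $g$ and $\hat{g}$ to exchange arguments — and, in the infinite-dimensional setting, the domain caveat of \Cref{remark:hyperbolicity_dense}. For the latter, the identities should be read for $i_* \hat{X}, i_* \hat{Y}$ lying in the domain of the possibly unbounded operator $A^j |_{i(\hat{u})}$; since $\hat{M}$ is finite-dimensional, its pushforward vectors can be taken in this domain, so the chain closes without difficulty.
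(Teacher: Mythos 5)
Your proposal is correct and follows essentially the same three-step chain as the paper's proof: expand $\hat{A}^j = p_* A^j i_*$, apply the adjunction \eqref{eq:reduce_hyperbolicity}, use the symmetry of $A^j$ w.r.t.\ $g$, and apply the adjunction once more. Your closing remark on domains in the unbounded case matches the caveat the paper itself defers to the remark following \Cref{thm:reduce_hyperbolicity}.
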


\begin{proof}
  Suppose that \cref{eq:1stonM} is hyperbolic w.r.t. 
  the Riemannian metric $g$.
  Note that 
  \begin{displaymath}
      \hat{g} (p_* A^j i_* \hat{X}, \hat{Y}) = 
      g (A^j i_* \hat{X}, i_* \hat{Y}) = 
      g (i_* \hat{X}, A^j i_* \hat{Y}) = 
      \hat{g} (\hat{X}, p_* A^j i_* \hat{Y}),
  \end{displaymath}
  for each $\hat{X}, \hat{Y} \in \Gamma(T \hat{M})$ and each 
  $j = 1, 2, \ldots, d$. Therefore, \cref{eq:reduced} is hyperbolic 
  w.r.t. the Riemannian metric $\hat{g}$.
\end{proof}

\begin{remark}
  According to the definition of $\hat{g}$, 
  \cref{eq:reduce_hyperbolicity} is equivalent to the statement that 
  the mapping $i_* p_*$ is an orthogonal projection from 
  $T_{i(\hat{u})} M$ to $i_* T_{\hat{u}} \hat{M}$ w.r.t. the 
  inner product $g|_{i(\hat{u})}$, similar to the 
  discussions in \Cref{rmk:proj}.
\end{remark}

\begin{remark}
  Suppose that $T_u M$ is a Hilbert space with the inner 
  product $g|_u$. The proof of \Cref{thm:reduce_hyperbolicity} 
  presented above applies to bounded operators $A^j$. In the case 
  that $A^j$ is unbounded, an issue arises that the domain 
  $D(\+\sigma \cdot \+{\hat{A}}|_{\hat{u}})$ with 
  $\hat{u} \in \hat{M}$ 
  and $\+\sigma \in \bS^{d - 1}$ may not be dense in the space 
  $T_{\hat{u}} \hat{M}$ or may even contain only the zero element 
  under the assumption that the domain 
  $D(\+\sigma \cdot \+A|_u)$ is dense in the space $T_u M$ for each 
  $u \in M$ and each $\+\sigma \in \bS^{d - 1}$, as mentioned 
  in \Cref{remark:hyperbolicity_dense}. 
  To address this issue, we need to make additional assumptions on 
  the ansatz manifold $\hat{M}$. 
  Specifically, we require that $i_* T_{\hat{u}} \hat{M}$ is a closed 
  subspace of $T_{i(\hat{u})} M$ and that $D(\hat{A} |_{\hat{u}}) = 
  (i_*)^{-1} \big( D(A|_{i(\hat{u})}) \cap 
  i_* T_{\hat{u}} \hat{M} \big)$ is dense in the space 
  $T_{\hat{u}} \hat{M}$.
  In practical applications, these requirements are easily satisfied.
  These requirements hold when $\hat{M}$ is finite-dimensional, and 
  $i_* T_{\hat{u}} \hat{M}$ is a subset of $D(A|_{i(\hat{u})})$.
  From now on, we shall always make these assumptions. Furthermore, 
  at this time, the operator $\+\sigma \cdot \+{\hat{A}} |_{\hat{u}}$ 
  is also symmetric (resp. self-adjoint) if the operator 
  $\+\sigma \cdot \+A|_{i(\hat{u})}$ is symmetric 
  (resp. self-adjoint).
\end{remark}

\subsubsection{Conservation laws and entropy dissipation}
Before exploring the preservation of conserved quantities and 
entropy, let us first consider the preservation of fluxes. 
We state the following theorem.

\begin{theorem}\label{thm:reduce_flux}
  Suppose that $\+F_c$ is the flux of $c$ for 
  \cref{eq:1stonM}. Let $\hat{c} := c \circ i$ and 
  $\+{\hat{F}}_{\hat{c}} := \+F_c \circ i$ be naturally induced by 
  $c$ and $\+F_c$, respectively.
  If 
  \begin{equation}\label{eq:flux_proj}
    p^* \rd \hat{c} |_{\hat{u}} = \rd c |_{i(\hat{u})},
  \end{equation}
  for each 
  $\hat{u} \in \hat{M}$, then $\+{\hat{F}}_{\hat{c}}$ is the flux of 
  $\hat{c}$ for \cref{eq:reduced}.
  Furthermore, one has that 
  \begin{displaymath}
    \pd{\hat{c}}{t} + \nabla_{\+x} \cdot \+{\hat{F}}_{\hat{c}} = 
      \inn{\rd c, Q} |_{i(\hat{u})}.
  \end{displaymath}
\end{theorem}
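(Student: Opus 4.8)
The plan is to verify directly that $\+{\hat F}_{\hat c}$ satisfies the defining property of a flux for the reduced model \eqref{eq:reduced}, namely (using the equivalent form of \Cref{def:flux} noted right after it) that $\inn{\rd \hat F_{\hat c}^j, \hat X} = \inn{\rd\hat c, \hat A^j \hat X}$ holds on $\hat{M}$ for every $\hat X \in \Gamma(T\hat M)$ and every $j = 1, 2, \ldots, d$. Two elementary facts will do all the work. First, since $\hat F_{\hat c}^j = F_c^j \circ i$ and $\hat c = c \circ i$, pulling back differentials gives $\rd\hat F_{\hat c}^j = i^* \rd F_c^j$ and $\rd\hat c = i^* \rd c$. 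Second, I will repeatedly use the general pushforward--pullback duality $\inn{\phi^* \omega, V} = \inn{\omega, \phi_* V}$ valid for any smooth map $\phi$, a cotangent vector $\omega$, and a tangent vector $V$, applied both to $\phi = i$ and to $\phi = p$.

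First I would expand the right-hand side. Since $\hat A^j \hat X = p_* A^j i_* \hat X$ by definition, the duality identity for $p$ together with the hypothesis \eqref{eq:flux_proj} give, at each $\hat u \in \hat M$,
\begin{displaymath}
  \inn{\rd\hat c, \hat A^j \hat X} \big|_{\hat u} =
  \inn{p^* \rd\hat c, A^j i_* \hat X} \big|_{i(\hat u)} =
  \inn{\rd c, A^j i_* \hat X} \big|_{i(\hat u)}.
\end{displaymath}
By the equivalent characterization of the flux for the original \cref{eq:1stonM}, the last expression equals $\inn{\rd F_c^j, i_* \hat X}|_{i(\hat u)}$, and applying the duality for $i$ together with $\rd\hat F_{\hat c}^j = i^* \rd F_c^j$ turns this into $\inn{\rd\hat F_{\hat c}^j, \hat X}|_{\hat u}$. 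Chaining these equalities establishes the flux property, so $\+{\hat F}_{\hat c}$ is the flux of $\hat c$ for \eqref{eq:reduced}.

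For the ``furthermore'' assertion, I would apply \Cref{rmk:flux} to the reduced equation --- now legitimate, since $\+{\hat F}_{\hat c}$ has just been shown to be a genuine flux --- to obtain $\pd{\hat c}{t} + \nabla_{\+x} \cdot \+{\hat F}_{\hat c} = \inn{\rd\hat c, \hat Q}|_{\hat u}$. It then remains only to identify $\inn{\rd\hat c, \hat Q}|_{\hat u}$ with $\inn{\rd c, Q}|_{i(\hat u)}$. Since $\hat Q = p_* \circ Q \circ i$, one more application of the duality for $p$ and of \eqref{eq:flux_proj} yields $\inn{\rd\hat c, p_* Q}|_{\hat u} = \inn{p^* \rd\hat c, Q}|_{i(\hat u)} = \inn{\rd c, Q}|_{i(\hat u)}$, which completes the argument.

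The computation is essentially formal, and the only point demanding care --- the one I expect to be the main obstacle to a clean write-up rather than a conceptual difficulty --- is the bookkeeping of basepoints. Every object on $\hat M$ is evaluated at $\hat u$ while its image under $i$ lives at $i(\hat u)$, and the identity $p \circ i = \id_{\hat M}$ must be invoked to see that $p_*|_{i(\hat u)}$ lands back in $T_{\hat u}\hat M$ and that $p^*|_{i(\hat u)}$ sends $T^*_{\hat u}\hat M$ into $T^*_{i(\hat u)} M$, so that the hypothesis $p^* \rd\hat c|_{\hat u} = \rd c|_{i(\hat u)}$ is applied at precisely the correct point.
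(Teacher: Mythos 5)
Your proposal is correct and follows essentially the same route as the paper's proof: both verify the flux identity $\inns{\rd \hat{F}^j_{\hat{c}}, \hat{X}} = \inns{\rd \hat{c}, \hat{A}^j \hat{X}}$ by the same chain of equalities (pullback of differentials along $i$, the flux property of $\+F_c$ for \cref{eq:1stonM}, pushforward--pullback duality for $p$ combined with the hypothesis \eqref{eq:flux_proj}, and the definition of $\hat{A}^j$), merely traversed in the opposite direction, and both conclude the ``furthermore'' statement by applying \Cref{rmk:flux} to the reduced model together with the identification $\inns{\rd \hat{c}, \hat{Q}}|_{\hat{u}} = \inns{\rd c, Q}|_{i(\hat{u})}$. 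Your closing remark on basepoint bookkeeping is a fair observation, though the paper handles it silently with the same evaluation-point notation you use.
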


\begin{proof}
  Note that 
  \begin{displaymath}
    \inns{\rd \hat{F}^j_{\hat{c}}, \hat{X}} |_{\hat{u}} = 
    \inns{\rd F^j_{c}, i_* \hat{X}} |_{i(\hat{u})} = 
    \inns{\rd c, A^j i_* \hat{X}} |_{i(\hat{u})} = 
    \inns{\rd \hat{c}, p_* A^j i_* \hat{X}} |_{\hat{u}} = 
    \inns{\rd \hat{c}, \hat{A}^j \hat{X}} |_{\hat{u}},
  \end{displaymath}
  for each $\hat{u} \in \hat{M}$, each 
  $\hat{X} \in \Gamma(T \hat{M})$ and each $j = 1, 2, \ldots, d$.
  Furthermore, one has that 
  \begin{displaymath}
    \inns{\rd \hat{c}, \hat{Q}} |_{\hat{u}} = 
    \inns{\rd \hat{c}, p_* Q i} |_{\hat{u}} = 
    \inns{\rd c, Q} |_{i(\hat{u})},
  \end{displaymath}
  for each $\hat{u} \in \hat{M}$.
  Therefore, by applying \Cref{rmk:flux}, we complete the proof of 
  this theorem.
\end{proof}

\begin{remark}\label{rmk:reduce_flux}
  \Cref{eq:flux_proj} is equivalent to the statement 
  that the equality 
  $\inn{\rd c, X} |_{i(\hat{u})} = 
  \inn{\rd c, i_* p_* X} |_{i(\hat{u})}$ holds 
  for each $X|_{i(\hat{u})} \in T_{i(\hat{u})} M$. 
  Furthermore, by Riesz representation theorem, 
  under the assumption of \cref{eq:reduce_hyperbolicity},
  which ensures the hyperbolicity of reduced models, 
  \cref{eq:flux_proj} is equivalent to the statement that there 
  exists a tangent vector 
  $\hat{X} |_{\hat{u}} \in T_{\hat{u}} \hat{M}$ such that 
  $\rd c |_{i(\hat{u})} = g(i_* \hat{X} |_{\hat{u}}, \cdot)$.
\end{remark}

\Cref{thm:reduce_flux} shows that the model 
reduction method preserves the conserved quantities and entropy 
as long as the condition \eqref{eq:flux_proj} holds for 
each $\hat{u} \in \hat{M}$.

\subsubsection{Finite propagation speed}
In this part, we demonstrate that the reduced models obtained 
through the natural model reduction preserve the property of finite 
propagation speed. Specifically, the 
maximum propagation speed does not increase
after the natural model reduction. We have the following 
theorem.

\begin{theorem}\label{thm:reduce_speed}
  Given the self-adjoint operator $A^j|_u$ in the Hilbert space 
  $(T_u M, g|_u)$ for each $u \in M$ and $j = 1, 2, \ldots, d$ 
  satisfying that $\rho(\+A|_u) < +\infty$, and a closed submanifold 
  $\hat{M}$ of $M$, If the condition \eqref{eq:reduce_hyperbolicity} 
  holds, then 
  \begin{displaymath}
    \rho(\+{\hat{A}}|_{\hat{u}}) \le \rho(\+A|_{i(\hat{u})}) 
    < +\infty.
  \end{displaymath}
\end{theorem}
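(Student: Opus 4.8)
The plan is to reduce the claim to a direction-by-direction comparison of spectral radii and then exploit the fact that, under \eqref{eq:reduce_hyperbolicity}, the reduced operator in each fixed direction is nothing but a compression of the original self-adjoint operator to a subspace. First I would fix $\+\sigma \in \bS^{d-1}$ and note that, by linearity of the construction $\hat{A}^j = p_* \circ A^j \circ i_*$, one has $\+\sigma \cdot \+{\hat{A}}|_{\hat{u}} = \sigma_j \hat{A}^j = p_*(\+\sigma \cdot \+A)|_{i(\hat{u})}\, i_*$, so that it suffices to prove $\rho(\+\sigma \cdot \+{\hat{A}}|_{\hat{u}}) \le \rho(\+\sigma \cdot \+A|_{i(\hat{u})})$ for every $\+\sigma$ and then take the supremum over $\bS^{d-1}$.

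Next I would settle the boundedness issue, which is the only genuinely delicate point. Since $\rho(\+A|_u) = \sup_{\+\sigma} \rho(\+\sigma \cdot \+A|_u) < +\infty$, choosing $\+\sigma$ to run over the coordinate directions shows that each $A^j|_u$ has finite spectral radius; a self-adjoint operator whose spectrum is bounded is a bounded operator, with operator norm equal to its spectral radius by the spectral theorem. Consequently every real combination $\+\sigma \cdot \+A|_u = \sigma_j A^j|_u$ is a bounded self-adjoint operator and $\rho(\+\sigma \cdot \+A|_u) = \norm{\+\sigma \cdot \+A|_u}$. This removes the domain subtleties flagged in \Cref{remark:hyperbolicity_dense} and lets me work entirely with bounded operators.

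Then I would bring in the geometry. By the remark following \Cref{thm:reduce_hyperbolicity}, condition \eqref{eq:reduce_hyperbolicity} says precisely that $\Pi := i_* p_*$ is the $g$-orthogonal projection of $T_{i(\hat{u})} M$ onto the closed subspace $W := i_* T_{\hat{u}} \hat{M}$, while $\hat{g} = i^* g$ makes $i_* : (T_{\hat{u}} \hat{M}, \hat{g}) \to (W, g)$ a surjective isometry. Writing $T := \+\sigma \cdot \+A|_{i(\hat{u})}$, I would check that $i_*(\+\sigma \cdot \+{\hat{A}})|_{\hat{u}} = \Pi T i_* = \Pi T \Pi i_*$ on $T_{\hat{u}} \hat{M}$, so that $\+\sigma \cdot \+{\hat{A}}|_{\hat{u}}$ is conjugate, via the isometry $i_*$, to the compression $T_W := \Pi T|_W : W \to W$; conjugation by an invertible isometry preserves the spectrum, hence $\rho(\+\sigma \cdot \+{\hat{A}}|_{\hat{u}}) = \rho(T_W)$. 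A one-line computation using self-adjointness of both $T$ and $\Pi$ shows $T_W$ is self-adjoint on $W$, whence $\rho(T_W) = \norm{T_W} \le \norm{T} = \rho(T)$, the middle inequality being the elementary estimate $\norm{\Pi T w} \le \norm{T w} \le \norm{T}$ for unit $w \in W$.

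Finally, taking the supremum over $\+\sigma \in \bS^{d-1}$ yields $\rho(\+{\hat{A}}|_{\hat{u}}) \le \rho(\+A|_{i(\hat{u})}) < +\infty$, which is the assertion. I expect the main obstacle to be not the geometric comparison---which is the routine fact that compressing a self-adjoint operator to a closed subspace cannot increase its norm---but rather the careful justification that finite spectral radius upgrades self-adjointness to genuine boundedness, so that the identity ``spectral radius equals operator norm'' is legitimately available and the conjugation-by-$i_*$ argument applies verbatim; once the operators are known to be bounded, the remaining steps are standard functional-analytic bookkeeping.
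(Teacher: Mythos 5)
Your proposal is correct and is essentially the paper's own argument: both use \eqref{eq:reduce_hyperbolicity} to realize $\+\sigma \cdot \+{\hat{A}}|_{\hat{u}}$, through the isometry $i_* : (T_{\hat{u}}\hat{M}, \hat{g}) \to (i_* T_{\hat{u}}\hat{M}, g)$, as the compression of the bounded self-adjoint operator $\+\sigma \cdot \+A|_{i(\hat{u})}$ to the closed subspace $i_* T_{\hat{u}}\hat{M}$, and then observe that compressing a self-adjoint operator to a closed subspace cannot increase its spectral radius. The paper phrases this final step as restricting the Rayleigh-quotient supremum from $T_{i(\hat{u})}M$ to the subspace, which for bounded self-adjoint operators is the same fact as your norm estimate $\norm{\Pi T|_W} \le \norm{T}$; your extra care in upgrading finite spectral radius to boundedness matches the paper's (unproved) one-line assertion of the same point.
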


\begin{proof}
  Note that $i_*(T_{\hat{u}} \hat{M})$ is a closed subspace of 
  $T_{i(\hat{u})} M$, and that 
  $i_* : T_{\hat{u}} \hat{M} \to i_*(T_{\hat{u}} \hat{M})$ is an 
  isomorphism that preserves the inner product for each 
  $\hat{u} \in \hat{M}$.
  Therefore, $T_{\hat{u}} \hat{M}$ is a Hilbert space with the inner 
  product $\hat{g}|_{\hat{u}}$.
  By the assumption on the spectral radius, one has that $A^j|_u$ is 
  a bounded operator for each $j = 1, 2, \ldots, d$. 
  By \Cref{thm:reduce_hyperbolicity}, one has that 
  $\hat{A}^j|_{\hat{u}}$ is also self-adjoint.
  Note that 
  \begin{displaymath}
    \begin{aligned}
      & \rho(\+{\hat{A}}|_{\hat{u}}) = \sup_{\+\sigma \in \bS^{d-1}}
      \sup_{\hat{X}|_{\hat{u}} \in T_{\hat{u}} \hat{M}} 
      \frac{\hat{g}(\+\sigma \cdot \+{\hat{A}} 
      \hat{X}, \hat{X})|_{\hat{u}}}
      {\hat{g}(\hat{X}, \hat{X})|_{\hat{u}}} = 
      \sup_{\+\sigma \in \bS^{d-1}}
      \sup_{\hat{X}|_{\hat{u}} \in T_{\hat{u}} \hat{M}} 
      \frac{g(\+\sigma \cdot \+A i_* \hat{X}, i_* \hat{X})|_{\hat{u}}}
      {g(i_* \hat{X}, i_* \hat{X})|_{\hat{u}}} \\
      =\ & \sup_{\+\sigma \in \bS^{d-1}}
      \sup_{X|_{i(\hat{u})} \in i_* T_{\hat{u}} \hat{M}} 
      \frac{g(\+\sigma \cdot \+A X, X)|_{i(\hat{u})}}
      {g(X, X)|_{i(\hat{u})}} \le \sup_{\+\sigma \in \bS^{d-1}}
      \sup_{X|_{i(\hat{u})} \in T_u M} 
      \frac{g(\+\sigma \cdot \+A X, X)|_{i(\hat{u})}}
      {g(X, X)|_{i(\hat{u})}} \\
      =\ & \rho(\+A|_{i(\hat{u})}) < +\infty,
    \end{aligned}
  \end{displaymath}
  which completes the proof.
\end{proof}

\subsubsection{Linear stability}
Let us investigate whether the natural model reduction preserves
the linear stability conditions. The first is to study whether 
\Cref{assump:equilibrium} can be preserved.
We need to make the following assumption, which is equivalent to  
preserving the equilibrium set and is easily verifiable given 
the reduced model.

\begin{assumption}
  \label{assump:equilibrium_reduced}
  The following assumptions hold:
  \begin{itemize}
    \item The set $\iota(\tilde{M})$ is a subset of $i(\hat{M})$;
    \item For each $\hat{u} \in \hat{M}$, the following relation 
    holds:
    \begin{equation}\label{eq:equilibriumND}
      p_* Q|_{i(\hat{u})} = 0 \in T_{\hat{u}} \hat{M} 
      \ \Longleftrightarrow\ 
      Q |_{i(\hat{u})} = 0 \in T_{i(\hat{u})} M.
    \end{equation}
  \end{itemize}
\end{assumption}

\begin{lemma}\label{thm:equilibrium}
  Under \Cref{assump:equilibrium_reduced},
  if the original equation \eqref{eq:1stonM} satisfies 
  \Cref{assump:equilibrium} on $(M, g)$,
  then the reduced model \eqref{eq:reduced} satisfies 
  \Cref{assump:equilibrium} on $(\hat{M}, \hat{g})$.
\end{lemma}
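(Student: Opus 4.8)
The plan is to construct explicitly, for the reduced model \eqref{eq:reduced}, its equilibrium manifold, the associated embedding, and a projection, and then to verify the two items of \Cref{assump:equilibrium} on $(\hat{M}, \hat{g})$ in turn. I would begin with the equilibrium set of \eqref{eq:reduced}, namely $\{\hat{u} \in \hat{M} : \hat{Q}|_{\hat{u}} = 0\} = \{\hat{u} \in \hat{M} : p_* Q|_{i(\hat{u})} = 0\}$. The second item of \Cref{assump:equilibrium_reduced} turns this into $\{\hat{u} \in \hat{M} : Q|_{i(\hat{u})} = 0\} = i^{-1}(\iota(\tilde{M}))$, where I use that $i$ is injective and, by the first item of \Cref{assump:equilibrium_reduced}, that $\iota(\tilde{M}) \subseteq i(\hat{M})$. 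Since $i : \hat{M} \to i(\hat{M})$ is a diffeomorphism onto its image, the map $\phi := i^{-1} \circ \iota : \tilde{M} \to \hat{M}$ (equivalently $\phi = p \circ \iota$, as $p$ agrees with $i^{-1}$ on $i(\hat{M})$) is a smooth embedding, being the composition of the embedding $\iota$, whose image lies in $i(\hat{M})$, with the diffeomorphism $i^{-1} : i(\hat{M}) \to \hat{M}$. Its image $\hat{\tilde{M}} := \phi(\tilde{M}) = i^{-1}(\iota(\tilde{M}))$ is therefore an embedded submanifold of $\hat{M}$, which establishes the first item; writing $\hat{\iota} : \hat{\tilde{M}} \to \hat{M}$ for its inclusion, I record the key identity $i \circ \hat{\iota} = \iota \circ \phi^{-1}$.

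Next I would build the projection. The natural candidate is $\hat{\pi} := \phi \circ \pi \circ i$, defined on the open set $\hat{U} := i^{-1}(U) \subseteq \hat{M}$, which contains $\hat{\tilde{M}}$ because $i(\hat{\tilde{M}}) = \iota(\tilde{M}) \subseteq U$. It is smooth and lands in $\hat{\tilde{M}}$. Using $\pi \circ \iota = \id_{\tilde{M}}$, $p \circ i = \id_{\hat{M}}$, and $\iota(\tilde{M}) \subseteq i(\hat{M})$, a short computation gives $\hat{\pi} \circ \hat{\iota} = \id_{\hat{\tilde{M}}}$.

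The main work, and the step I expect to be the crux, is verifying the metric-compatibility identity \eqref{eq:proj_require} for $(\hat{\pi}, \hat{g})$, that is, $\hat{g}(\hat{X}, \hat{\iota}_* \hat{\tilde{Y}})|_{\hat{\iota}(\hat{\tilde{u}})} = \hat{\tilde{g}}(\hat{\pi}_* \hat{X}, \hat{\tilde{Y}})|_{\hat{\tilde{u}}}$, where $\hat{\tilde{g}} := \hat{\iota}^* \hat{g}$. I would chain this down to the original compatibility. Using $\hat{g} = i^* g$ and the pushforward of $i \circ \hat{\iota} = \iota \circ \phi^{-1}$, the left-hand side equals $g(i_* \hat{X}, \iota_* \tilde{Y})|_{\iota(\tilde{u})}$, where $\tilde{u} = \phi^{-1}(\hat{\tilde{u}})$ and $\tilde{Y} := \phi^{-1}_* \hat{\tilde{Y}} \in T_{\tilde{u}} \tilde{M}$. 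Applying the original \eqref{eq:proj_require} with $X = i_* \hat{X}$ rewrites this as $\tilde{g}(\pi_* i_* \hat{X}, \tilde{Y})|_{\tilde{u}}$, and since $\hat{\pi}_* = \phi_* \pi_* i_*$ yields $\pi_* i_* \hat{X} = \phi^{-1}_* \hat{\pi}_* \hat{X}$, it becomes $\tilde{g}(\phi^{-1}_* \hat{\pi}_* \hat{X}, \phi^{-1}_* \hat{\tilde{Y}})|_{\tilde{u}}$. The argument then closes by observing that $\phi$ is an isometry: from $\hat{\tilde{g}} = \hat{\iota}^* i^* g = (\iota \circ \phi^{-1})^* g = (\phi^{-1})^* \tilde{g}$, this last expression is precisely $\hat{\tilde{g}}(\hat{\pi}_* \hat{X}, \hat{\tilde{Y}})|_{\hat{\tilde{u}}}$, as required.

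The delicate points to get right are the identification of $\hat{\tilde{M}}$ as a genuinely embedded submanifold (handled through the embedding $\phi$ rather than by asserting the nested-submanifold fact directly) and the careful bookkeeping of pushforwards and pullbacks under $\phi$, $\iota$, and $i$. I would also emphasize a structural observation: this derivation uses only $\hat{g} = i^* g$ together with the original \Cref{assump:equilibrium}, and in particular it does \emph{not} invoke the adjointness condition \eqref{eq:reduce_hyperbolicity}, so the preservation of the equilibrium structure is independent of the hyperbolicity-preserving hypothesis.
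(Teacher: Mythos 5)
Your proof is correct and takes essentially the same route as the paper's: your $\phi$ is exactly the paper's $\hat{\iota} = i^{-1} \circ \iota$, your projection $\hat{\pi} = \phi \circ \pi \circ i$ differs from the paper's $\pi \circ i$ only by the cosmetic choice of realizing the equilibrium manifold as its image $\phi(\tilde{M}) \subset \hat{M}$ rather than as the abstract $\tilde{M}$, and your chain of equalities verifying \cref{eq:proj_require} is the same computation via $\hat{g} = i^* g$ and $i \circ \hat{\iota} = \iota \circ \phi^{-1}$. Your two refinements --- restricting $\hat{\pi}$ to the open set $i^{-1}(U)$ (the paper writes $\hat{\pi} : \hat{M} \to \tilde{M}$ although $\pi$ is only defined on $U$) and verifying embeddedness of $\phi$ rather than inferring it from injectivity of $\hat{\iota}_*$ alone --- are harmless and slightly more careful than the paper's own exposition.
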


\begin{proof}
  \Cref{assump:equilibrium_reduced} yields that the equilibrium set 
  of \cref{eq:reduced} is also $\tilde{M}$. 
  Note that the embedding mapping $i$ yields a diffeomorphism 
  between $\hat{M}$ and $i(\hat{M}) \subset M$ and that 
  $\iota(\tilde{M}) \subset i(\hat{M})$. Therefore, one can define 
  the mapping $\hat{\iota}$ from $\tilde{M}$ to $\hat{M}$ as 
  \begin{displaymath}
    \hat{\iota} : \tilde{u} \in \tilde{M} \mapsto 
    i^{-1} ( \iota(\tilde{u}) ) \in \hat{M},
  \end{displaymath}
  which yields a diffeomorphism between $\tilde{M}$ and 
  $\hat{\iota}(\tilde{M})$. The following commutative diagrams hold: 
  $\iota = i \circ \hat{\iota}$ and 
  $\iota_*|_{\tilde{u}} = i_*|_{\hat{\iota}(\tilde{u})} 
  \circ \hat{\iota}_*|_{\tilde{u}}$ for each 
  $\tilde{u} \in \tilde{M}$. Since the pushforward mapping 
  $\iota_*|_{\tilde{u}}$ is injective, the pushforward mapping 
  $\hat{\iota}_*|_{\tilde{u}}$ is also injective. Therefore, 
  the equilibrium manifold $\tilde{M}$ is an embedded submanifold 
  of the ansatz manifold $\hat{M}$.

  As for the second item of \Cref{assump:equilibrium}, let us define 
  the projection $\hat{\pi} := \pi \circ i : \hat{M} \to \tilde{M}$, 
  which satisfies that 
  $\hat{\pi} \circ \hat{\iota} = \pi \circ i \circ \hat{\iota} = 
  \pi \circ \iota = \id_{\tilde{M}}$
  and that 
  \begin{multline*}
    \hat{g}(\hat{X}, \hat{\iota}_* \tilde{Y}) 
    |_{\hat{\iota}(\tilde{u})} = 
    g(i_* \hat{X}, i_* \hat{\iota}_* \tilde{Y}) 
    |_{i(\hat{\iota}(\tilde{u}))} \\
    = g(i_* \hat{X}, \iota_* \tilde{Y}) 
    |_{\iota(\tilde{u})} = 
    \tilde{g}(\pi_* i_* \hat{X}, \tilde{Y}) 
    |_{\tilde{u}} = 
    \tilde{g}(\hat{\pi}_* \hat{X}, \tilde{Y}) |_{\tilde{u}},
  \end{multline*}
  for each $\tilde{u} \in \tilde{M}$, $\hat{X} \in \Gamma(T \hat{M})$
  and $\tilde{Y} \in \Gamma(T \tilde{M})$. This observation 
  completes the proof of this lemma.
\end{proof}

\begin{theorem}
  Under \Cref{assump:equilibrium_reduced},
  if the original equation \eqref{eq:1stonM} satisfies 
  GSC (resp. GUSC or GWSC) on $(M, g)$, then 
  the reduced model \eqref{eq:reduced} satisfies 
  GSC (resp. GUSC or GWSC) on $(\hat{M}, \hat{g})$.
\end{theorem}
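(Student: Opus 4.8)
The plan is to verify the defining conditions of GSC (resp. GUSC, GWSC) for the reduced model \eqref{eq:reduced} on $(\hat M,\hat g)$ one by one, transferring each to the corresponding condition already assumed for \eqref{eq:1stonM} on $(M,g)$. By \Cref{thm:equilibrium}, \Cref{assump:equilibrium} already holds for the reduced model, with equilibrium manifold $\tilde M$, embedding $\hat\iota = i^{-1}\circ\iota$, and projection $\hat\pi = \pi\circ i$; I would record at the outset the commutation relations $\iota = i\circ\hat\iota$, $\iota_* = i_*\circ\hat\iota_*$ and $\hat\pi_* = \pi_*\circ i_*$, which are used throughout. Condition \ref{item:stability_2} for the reduced model is exactly the conclusion of \Cref{thm:reduce_hyperbolicity}, so here I invoke the standing adjointness hypothesis \eqref{eq:reduce_hyperbolicity}; and \eqref{eq:stability_1_1} is automatic by \Cref{rmk:stability_1_1}. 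It thus remains to treat \eqref{eq:stability_1_2} and the dissipation inequalities in \ref{item:stability_3}, \ref{item:uniform_stability}, and \ref{item:weak_stability}.

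The crux is a single transfer identity for the covariant derivative of $\hat Q$ at equilibrium. Fix $\tilde u\in\tilde M$ and set $u_0 = \iota(\tilde u) = i(\hat\iota(\tilde u))$. By \eqref{eq:equilibriumND} both $Q|_{u_0} = 0$ and $\hat Q|_{\hat\iota(\tilde u)} = 0$, so by \Cref{rmk:diff_g} the covariant derivatives $D_X Q|_{u_0}$ and $\hat D_{\hat X}\hat Q|_{\hat\iota(\tilde u)}$ are metric-independent and coincide with the respective linearizations. Differentiating $\hat Q = p_*\circ Q\circ i$ along a curve through $\hat\iota(\tilde u)$ and noting that the term in which $p_*$ itself is differentiated carries the factor $Q|_{u_0} = 0$ and hence drops, I obtain
\begin{equation*}
  \hat D_{\hat X}\hat Q\big|_{\hat\iota(\tilde u)} = p_*\big(D_{i_*\hat X}Q\big)\big|_{u_0}.
\end{equation*}
Pairing this with $\hat X$ and using the adjointness \eqref{eq:reduce_hyperbolicity} together with $\hat g = i^*g$ yields the bilinear transfer
\begin{equation*}
  \hat g\big(\hat D_{\hat X}\hat Q,\hat X\big)\big|_{\hat\iota(\tilde u)} = g\big(D_{i_*\hat X}Q,\, i_*\hat X\big)\big|_{u_0}.
\end{equation*}

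With these identities the remaining conditions follow by substituting $Y = i_*\hat X$. For \eqref{eq:stability_1_2}, writing $W := D_{i_*\hat X}Q|_{u_0}$ and applying the injective $i_*$, I reduce $\hat\iota_*\hat\pi_*\hat D_{\hat X}\hat Q$ to $\iota_*\pi_* i_* p_* W$; since $i_*p_*W - W$ is $g$-orthogonal to $i_* T_{\hat\iota(\tilde u)}\hat M \supseteq \iota_* T_{\tilde u}\tilde M$ and $\iota_*\pi_*$ is the $g$-orthogonal projection onto $\iota_* T_{\tilde u}\tilde M$ (by \eqref{eq:proj_require} and \Cref{rmk:proj}), that complement component is discarded, leaving $\iota_*\pi_* W = 0$ by the original \eqref{eq:stability_1_2}. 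For the dissipation conditions, $Y = i_*\hat X$ satisfies $Y|_{u_0}\ne 0 \Leftrightarrow \hat X|_{\hat\iota(\tilde u)}\ne 0$ and $\iota_*\pi_* Y = i_*\hat\iota_*\hat\pi_*\hat X$, so the hypothesis $\hat\iota_*\hat\pi_*\hat X = 0$ is equivalent to $\iota_*\pi_* Y = 0$; the bilinear transfer then carries the original \ref{item:stability_3} and \ref{item:weak_stability} to their reduced versions. For GUSC the same substitution, combined with $i_*\hat X - \iota_*\pi_* i_*\hat X = i_*(\hat X - \hat\iota_*\hat\pi_*\hat X)$ and $\norm{i_*\hat Z}_g = \norm{\hat Z}_{\hat g}$, rewrites the right-hand side of \eqref{eq:stability_3} so that the same $\lambda$ works and \ref{item:uniform_stability} descends.

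The principal obstacle is establishing the transfer identity for $\hat D_{\hat X}\hat Q$ cleanly: one must argue that, because $Q$ vanishes at equilibrium, covariant differentiation collapses to plain linearization (no connection coefficients intervene) and that the base-point dependence of $p_*$ contributes nothing. A second, more subtle point is \eqref{eq:stability_1_2}: the original condition annihilates only vectors of the form $D_Z Q$, whereas after reduction one faces the projected vector $i_* p_* D_{i_*\hat X}Q$, which is not of that form; the resolution is the nesting $\iota_* T\tilde M \subseteq i_* T\hat M$, guaranteed by $\iota(\tilde M)\subseteq i(\hat M)$ in \Cref{assump:equilibrium_reduced}, which lets $\iota_*\pi_*$ suppress exactly the complement component introduced by $i_*p_*$.
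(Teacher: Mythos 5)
Your proposal is correct, and its skeleton coincides with the paper's proof: both delegate the equilibrium structure to \Cref{thm:equilibrium}, condition \ref{item:stability_2} to \Cref{thm:reduce_hyperbolicity}, and \cref{eq:stability_1_1} to \Cref{rmk:stability_1_1}, and both then transfer the remaining conditions through the identity $\hat g(\hat D_{\hat X}\hat Q,\hat Y)|_{\hat\iota(\tilde u)} = g(D_{i_*\hat X}Q,\, i_*\hat Y)|_{\iota(\tilde u)}$ with the substitution $Y=i_*\hat X$, with identical bookkeeping for GSC/GUSC/GWSC (injectivity and isometry of $i_*$, equivalence of the kernel conditions $\hat\iota_*\hat\pi_*\hat X=0 \Leftrightarrow \iota_*\pi_* i_*\hat X = 0$, same constant $\lambda$). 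The two proofs diverge only in how the transfer identity and \cref{eq:stability_1_2} are established. The paper differentiates the scalar identity $g(i_*\hat Q,i_*\hat Y)=g(Q,i_*\hat Y)$ along $i_*\hat X$ at equilibrium and converts $p_*D_{i_*\hat X}(i_*\hat Q)$ into $\hat D_{\hat X}\hat Q$ via the Gauss--Codazzi formula, then verifies \cref{eq:stability_1_2} by duality, pairing against $\hat\iota_*\tilde Y$ and invoking \cref{eq:proj_require} for both $\pi$ and $\hat\pi$. You instead obtain the pointwise identity $\hat D_{\hat X}\hat Q|_{\hat\iota(\tilde u)} = p_*\bigl(D_{i_*\hat X}Q\bigr)|_{\iota(\tilde u)}$ by linearizing $\hat Q=p_*\circ Q\circ i$ directly at a zero of $Q$, using \Cref{rmk:diff_g} to identify covariant derivatives with plain linearizations --- a more elementary route that bypasses Gauss--Codazzi and makes explicit why the base-point dependence of $p_*$ contributes nothing --- and you verify \cref{eq:stability_1_2} on the primal side via the nesting $\iota_*T\tilde M\subseteq i_*T\hat M$ (from $\iota(\tilde M)\subseteq i(\hat M)$), so that $\iota_*\pi_*$ annihilates the complement component introduced by $i_*p_*$; this is the same orthogonality fact as the paper's \cref{eq:proj_require} argument (cf. \Cref{rmk:proj}), phrased with projections rather than pairings. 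Both variants are complete; the paper's buys a statement valid against arbitrary $\hat Y$ in one stroke, while yours trades the connection machinery for a chain-rule computation.
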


\begin{proof}
  The reduced model \eqref{eq:reduced} satisfies 
  \cref{eq:stability_1_1} in \Cref{item:stability_1} 
  thanks to \Cref{rmk:stability_1_1} and \Cref{thm:equilibrium}.

  For each $\hat{u} \in \hat{M}$ and 
  $\hat{Y} \in \Gamma(T \hat{M})$, one has that 
  \begin{displaymath}
    g(i_* \hat{Q}, i_* \hat{Y}) |_{i(\hat{u})} = 
    g(Q, i_* \hat{Y}) |_{i(\hat{u})}.
  \end{displaymath}
  Denote the Levi-Civita connection on $(\hat{M}, \hat{g})$ by 
  $\hat{D}$.
  Gauss--Codazzi formula yields that 
  \begin{displaymath}
    \hat{D}_{\hat{X}} \hat{Y} = p_* D_{i_* \hat{X}} (i_* \hat{Y}),
  \end{displaymath}
  for each $\hat{X}, \hat{Y} \in \Gamma(T \hat{M})$. 
  Therefore, for each $\tilde{u} \in \tilde{M}$ and 
  $\hat{X} \in \Gamma(T \hat{M})$, 
  \begin{multline*}
    \quad\ D_{i_* \hat{X}} 
      \left( g(i_* \hat{Q}, i_* \hat{Y}) \right) 
      \Big|_{\iota(\tilde{u})} = 
      g \left( D_{i_* \hat{X}} (i_* \hat{Q}), i_* \hat{Y} \right)
      \Big|_{\iota(\tilde{u})} = 
      \hat{g} (\hat{D}_{\hat{X}} \hat{Q}, \hat{Y}) 
      |_{\hat{\iota}(\tilde{u})} \\
    = D_{i_* \hat{X}} \left( g(Q, i_* \hat{Y}) \right) 
      \Big|_{\iota(\tilde{u})} = 
      g( D_{i_* \hat{X}} Q, i_* \hat{Y} ) 
      |_{\iota(\tilde{u})}.
  \end{multline*}
  For each $\tilde{Y} \in \Gamma(T \tilde{M})$, 
  let us take $\hat{Y} = \hat{\iota}_* \tilde{Y}$. 
  Since the original equation \eqref{eq:1stonM} satisfies 
  \cref{eq:stability_1_2}, one can obtain that 
  \begin{displaymath}
    \tilde{g} ( \hat{\pi}_* \hat{D}_{\hat{X}} \hat{Q}, \tilde{Y} )
    |_{\tilde{u}} = 
    \hat{g} ( \hat{D}_{\hat{X}} \hat{Q}, \hat{\iota}_* \tilde{Y} )
    |_{\hat{\iota}(\tilde{u})} = 
    g( D_{i_* \hat{X}} Q, {\iota}_* \tilde{Y} ) 
    |_{\iota(\tilde{u})} = 
    \tilde{g}( \pi_* D_{i_* \hat{X}} Q, \tilde{Y} ) 
    |_{\tilde{u}} = 0,
  \end{displaymath}
  which yields that 
  $\hat{\pi}_* \hat{D}_{\hat{X}} \hat{Q} |_{\tilde{u}} = 0$.
  Thus, the reduced model \eqref{eq:reduced} also satisfies 
  \cref{eq:stability_1_2}.

  As for \Cref{item:stability_2}, for each 
  $\hat{X}, \hat{Y} \in \Gamma(T \hat{M})$ and $j = 1, 2, \ldots, d$,
  one has that 
  \begin{displaymath}
    \hat{g} (\hat{A}^j \hat{X}, \hat{Y}) = 
    g( A^j i_* \hat{X}, i_* \hat{Y} ) = 
    g( i_* \hat{X}, A^j i_* \hat{Y} ) = 
    \hat{g} (\hat{X}, \hat{A}^j \hat{Y}).
  \end{displaymath}

  Let us now consider the preservation of Items 
  \ref{item:stability_3}, \ref{item:uniform_stability} or 
  \ref{item:weak_stability}.
  Note that the following equality holds: 
  $g(i_* \hat{Q}, i_* \hat{X}) |_{i(\hat{u})} = 
  g(Q, i_* \hat{X}) |_{i(\hat{u})}$ for each 
  $\hat{u} \in \hat{M}$ and $\hat{X} \in \Gamma(T \hat{M})$.
  Therefore, for each $\tilde{u} \in \tilde{M}$, one has that 
  \begin{multline*}
    D_{i_* \hat{X}} \left( g(i_* \hat{Q}, i_* \hat{X}) \right) 
      \Big|_{\iota(\tilde{u})} = 
      g \big( D_{i_* \hat{X}} (i_* \hat{Q}), i_* \hat{X} \big) 
      \big|_{\iota(\tilde{u})} = 
      \hat{g}(\hat{D}_{\hat{X}} \hat{Q}, \hat{X}) 
      |_{\hat{\iota}(\tilde{u})} \\
    = D_{i_* \hat{X}} \left( g(Q, i_* \hat{X}) \right) 
    \Big|_{\iota(\tilde{u})} = 
    g(D_{i_* \hat{X}} Q, i_* \hat{X}) |_{\iota(\tilde{u})}.
  \end{multline*}
  If \cref{eq:1stonM} satisfies \Cref{item:stability_3}, then 
  for each $\hat{X} \in \Gamma(T \hat{M})$ satisfying that 
  $\hat{X} |_{\hat{\iota}(\tilde{u})} \ne 0$ and 
  $\hat{\iota}_* \hat{\pi}_* \hat{X} |_{\hat{\iota}(\tilde{u})} = 0$, 
  one has that $i_* \hat{X} |_{\iota(\tilde{u})} \ne 0$, 
  \begin{displaymath}
    \iota_* \pi_* i_* \hat{X} |_{\iota(\tilde{u})} =
    \iota_* \hat{\pi}_* \hat{X} |_{\iota(\tilde{u})} = 0,
  \end{displaymath}
  and thus,
  \begin{displaymath}
    g(D_{i_* \hat{X}} Q, i_* \hat{X}) |_{\iota(\tilde{u})} < 0;
  \end{displaymath}
  If \cref{eq:1stonM} satisfies \Cref{item:uniform_stability}, 
  then 
  \begin{displaymath}
    \begin{aligned}
      & \quad\ g(D_{i_* \hat{X}} Q, i_* \hat{X}) 
      |_{\iota(\tilde{u})} \le - \lambda 
      \|{i_* \hat{X} - \iota_* \pi_* i_* \hat{X}}\|_g^2 
      \big|_{\iota(\tilde{u})} \\ 
      & = - \lambda \|{i_* \hat{X} - 
      i_* \hat{\iota}_* \hat{\pi}_* p_* i_* \hat{X}}\|
      _g^2 \big|_{\iota(\tilde{u})} = - \lambda 
      \|{\hat{X} - \hat{\iota}_* \hat{\pi}_* \hat{X}}\|
      _{\hat{g}}^2 \big|_{\hat{\iota}(\tilde{u})};
    \end{aligned}
  \end{displaymath}
  If \cref{eq:1stonM} satisfies \Cref{item:weak_stability}, then 
  \begin{displaymath}
    g(D_{i_* \hat{X}} Q, i_* \hat{X}) |_{\iota(\tilde{u})} \le 0.
  \end{displaymath}
  Therefore, the proof is completed.
\end{proof}

\subsection{Coordinate form}
\label{sec:reduction_coordinate}
In many cases, we are interested in not the explicit expression of 
the solution $u$ but rather some functions of $u$.
Suppose that \cref{eq:1stonM} is hyperbolic w.r.t. the Riemannian 
metric $g$, and we are interested in the value of 
smooth function $\+\omega = 
(\omega^1, \omega^2, \ldots, \omega^n)^\top : M \to \bR^n$.
Let us begin by seeing the evolution equation of $\+\omega$.
We need an assumption that the mapping $\+\omega$, when 
restricted to an open set $\hat{U} \subset \hat{M}$ we are 
considering, is a coordinate chart. 
At this time, for each $\hat{u} \in \hat{U}$, 
one basis of the tangent space $T_{\hat{u}} \hat{M}$ can be chosen 
as $\big\{ \tfrac{\partial \hat{u}}{\partial \omega^k} 
\big|_{\hat{u}} \big\}_{k = 1}^{n}$, where 
$\tfrac{\partial \hat{u}}{\partial \omega^k}$ is a tangent vector 
field on $\hat{U}$.
According to the expression of the reduced model \eqref{eq:reduced}, 
one can obtain that 
\begin{displaymath}
  \pd{\hat{u}}{\omega^{\ell}} \pd{\omega^{\ell}}{t} + 
    \hat{A}^j \pd{\hat{u}}{\omega^{\ell}}\pd{\omega^{\ell}}{x^j} 
    = \hat{Q} |_{\hat{u}}.
\end{displaymath}
Therefore, a closed system w.r.t. $\+\omega$ can be given by
the following symmetric hyperbolic equation,
\begin{displaymath}
  \begin{aligned}
    & \hat{g} \left( \pd{\hat{u}}{\omega^k}, 
      \pd{\hat{u}}{\omega^{\ell}} \right) \pd{\omega^{\ell}}{t} + 
      \hat{g} \left( \pd{\hat{u}}{\omega^k}, \hat{A}^j 
      \pd{\hat{u}}{\omega^{\ell}} \right)\pd{\omega^{\ell}}{x^j} = 
      \hat{g} \left( \pd{\hat{u}}{\omega^k}, 
      \hat{Q} |_{\hat{u}(\+\omega)} \right), \\
    & \omega^k(0, \cdot) = \omega^k \circ p \circ u_0,
  \end{aligned}
\end{displaymath}
which can be written in a more compact form, 
\begin{equation}\label{eq:reduce_compact}
  \begin{aligned}
    & \mathbf{A}^0 \pd{\+\omega}{t} + 
      \mathbf{A}^j \pd{\+\omega}{x^j} = 
      \mathbf{Q}, \\
    & \+\omega(0, \cdot) = \+\omega \circ p \circ u_0,
  \end{aligned}
\end{equation}
where $\mathbf{A}^0_{k, \ell} := \hat{g} 
\big( \tfrac{\partial \hat{u}}{\partial \omega^k}, 
\tfrac{\partial \hat{u}}{\partial \omega^{\ell}} \big)$, 
$\mathbf{A}^j_{k, \ell} := \hat{g} 
\big( \tfrac{\partial \hat{u}}{\partial \omega^k}, 
\hat{A}^j \tfrac{\partial \hat{u}}{\partial \omega^{\ell}} \big)$, 
and $\mathbf{Q}_k := \hat{g} 
\big( \tfrac{\partial \hat{u}}{\partial \omega^k}, 
\hat{Q} |_{\hat{u}(\+\omega)} \big)$.
Here, $\mathbf{A}^0$ is a symmetric and positive definite matrix, 
and $\mathbf{A}^j$ is a symmetric matrix for each 
$j = 1, 2, \ldots, n$.

\section{Application to kinetic equations}
\label{sec:kinetic}
In the previous section, we presented the abstract theory for model 
reduction of the first-order equations valued on manifolds. 
This section applies this theory to kinetic equations, a 
fundamental class of first-order equations valued on manifolds. We 
aim to demonstrate how the abstract theory can be used to develop 
model reduction methods for kinetic equations in a general form.

\subsection{A general form of kinetic equations}
\label{sec:kinetic_general}

Let us consider a general kinetic equation as an evolving law 
for a phase density function $f(\+\xi; t, \+x) \in \bR$ in time $t$,
\begin{equation}\label{eq:gke}
  \begin{aligned}
    & \pd{f}{t} + \+v(\+\xi) \cdot \nabla_{\+x}f = Q[f], \\
    & f(\cdot; 0, \cdot) = f_0,
  \end{aligned}
\end{equation}
where $t \in J \subset \bR$ is the time variable, 
$\+x = (x^1, x^2, \ldots, x^d) \in \Omega = \bR^d$ is the spatial 
variables, $\+\xi \in \+\Xi$ is the ordinate variables 
lying on a manifold $\+\Xi$ with dimension $\hat{d}$, and 
$\+v(\+\xi) = (v^1(\+\xi), v^2(\+\xi), \cdots, v^d(\+\xi))$ is the 
spatial transport velocity.
We adopt the conventional notation for kinetic equations and replace 
$u$ with $f$ to denote the solution in this context.
The phase density function $f$ depends on $t$, $\+x$, and $\+\xi$ 
with a 
total dimension of $1 + d + \hat{d}$. The right-hand term $Q[f]$ 
represents all other physical factors except spatial transport, 
which can introduce ordinate acceleration due to long-range effects 
such as Coulomb interaction or local particle interactions such as 
binary collisions.

This general kinetic equation encompasses many important examples, 
including the Boltzmann equation \cite{Boltzmann}, 
the radiative transfer equation \cite{chandrasekhar}, 
the Landau equation \cite{Landau1965}, 
the Vlasov equation \cite{Vlasov}, 
the Wigner equation \cite{Wigner}, 
the kinetic Fokker-Planck equation \cite{Chandrasekhar1943}, 
the Peierls--Boltzmann equation \cite{peierls}, and the uncertainty 
quantification of transport equations \cite{JinXiuZhu2015}.

The formulation presented here can be connected to the abstract 
theory introduced in the previous section. The manifold 
$M$ comprises all admissible solutions $f(\cdot; t, \+x)$, 
forming an open set in a topological vector space $V$. 
In our case, the linear space $V$ is a function space of real-valued 
functions defined on the manifold $\+\Xi$.
The Riemannian metric $g$ should satisfy specific requirements, as 
stated in \Cref{thm:bilinear_hyperbolicity} and \Cref{assump:metric}
later, to ensure hyperbolicity and linear stability. 
The tangent vectors of $M$ lie in the space $V$, and the 
$(1,1)$-tensor $A^j$ acts on a tangent vector in $V$ as 
multiplication by the function $v^j(\+\xi)$. 
The right-hand term $Q[f]$ is a tangent vector at $f \in M$. We 
always suppose that the solutions to \cref{eq:gke} 
decay sufficiently fast at infinity.

\subsection{Structural properties}
\label{sec:kinetic_properties}
We demonstrated that the model reduction preserves several important 
structural properties in \Cref{sec:reduction_preserve}. To prove 
these properties for the reduced models, it suffices to show that 
the kinetic equations respect these structures. 
Let us expand the framework proposed in 
\Cref{sec:framework_structure} 
to incorporate these structural properties of kinetic equations.

\subsubsection{Hyperbolicity}
\label{sec:kinetic_hyperbolicity}
We begin our analysis by examining the property of hyperbolicity 
of kinetic equations. The conditions the Riemannian metric 
should satisfy can be identified based on the structure of 
\cref{eq:gke}. To facilitate further discussions, we 
introduce the following lemma, the proof of which is provided in 
\Cref{app:kinetic_properties}.

\begin{lemma}\label{thm:bilinear}
  Let $\+v \in C^\infty(\+\Xi, \bR^d)$ be the smooth injective 
  immersion from the finite-dimensional manifold $\+\Xi$ to $\bR^d$,
  and $V = C^\infty_{\rc}(\+\Xi)$ be the space of smooth functions 
  defined on $\+\Xi$ with compact support.
  If $a : V \times V \to \bR$ is a continuous 
  bilinear form on $V$, then the following statements are equivalent:
  \begin{enumerate}[label=(\roman*),ref=(\roman*)]
    \item \label{item:bilinear_1}
    For each $h_1, h_2 \in V$, and 
    $j = 1, 2, \ldots, d$, the following equality holds:
    \begin{displaymath}
      a(v^j h_1, h_2) = 
      a(h_1, v^j h_2);
    \end{displaymath}
    \item \label{item:bilinear_2}
    There exists $\cA \in V^*$ such that 
    for each $h_1, h_2 \in V$, the following equality holds:
    \begin{displaymath}
      a(h_1, h_2) = \inn{\cA, h_1 h_2}.
    \end{displaymath}
  \end{enumerate}
\end{lemma}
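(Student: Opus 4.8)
The implication (ii) $\Rightarrow$ (i) is immediate: if $a(h_1,h_2)=\inn{\cA,h_1h_2}$, then $a(v^jh_1,h_2)=\inn{\cA,v^jh_1h_2}=a(h_1,v^jh_2)$ because pointwise multiplication is commutative. So the whole content is (i) $\Rightarrow$ (ii), and the plan is first to upgrade (i) from multiplication by the coordinate functions $v^j$ to multiplication by arbitrary smooth functions of $\+v$. Iterating (i) gives $a(P(\+v)h_1,h_2)=a(h_1,P(\+v)h_2)$ for every polynomial $P$ on $\bR^d$. Fixing $h_1,h_2$ with supports in a compact $K$ and approximating any $\psi\in C^\infty(\bR^d)$ by polynomials in the $C^\infty$ topology on a neighborhood of the compact set $\+v(K)$ (multivariate Weierstrass, with all derivatives), the multipliers $P(\+v)h_i$ converge to $\psi(\+v)h_i$ in $V=C^\infty_{\rc}(\+\Xi)$, so continuity of $a$ yields $a(\psi(\+v)h_1,h_2)=a(h_1,\psi(\+v)h_2)$ for all $\psi\in C^\infty(\bR^d)$. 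I will call this (i$'$).

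Next I would prove a locality statement: $a(h_1,h_2)=0$ whenever $h_2$ vanishes on a neighborhood of $\mathrm{supp}\,h_1$. Indeed, in that case $\mathrm{supp}\,h_1$ and $\mathrm{supp}\,h_2$ are disjoint compacta, so by injectivity of $\+v$ their images $\+v(\mathrm{supp}\,h_1)$ and $\+v(\mathrm{supp}\,h_2)$ are disjoint compacta in $\bR^d$; a smooth Urysohn function $\psi\in C^\infty(\bR^d)$ with $\psi\equiv0$ near the first and $\psi\equiv1$ near the second gives $\psi(\+v)h_1=0$ and $\psi(\+v)h_2=h_2$, whence (i$'$) yields $a(h_1,h_2)=a(h_1,\psi(\+v)h_2)=a(\psi(\+v)h_1,h_2)=0$. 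Using this, I would define $\cA$ by $\inn{\cA,\phi}:=a(\phi,\chi)$, where $\chi\in V$ is any bump with $\chi\equiv1$ on a neighborhood of $\mathrm{supp}\,\phi$; the locality statement (applied to $\chi-\chi'$) makes this independent of $\chi$, and choosing a common cutoff for finitely many functions shows $\cA$ is linear. Continuity of $a$ on each $C^\infty_K$ then gives $\cA\in V^*$.

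It remains to prove the factorization $a(h_1,h_2)=\inn{\cA,h_1h_2}$, which is where the immersion hypothesis genuinely enters. Since $\+v$ is an immersion, every point of $\+\Xi$ has a neighborhood $W$ on which $\+v$ is an embedding; for supports confined to such a $W$ I can extend $h_2\circ(\+v|_W)^{-1}$ to some $\tilde h_2\in C^\infty(\bR^d)$ with $\tilde h_2(\+v)=h_2$ on $W$, and write $h_2=\tilde h_2(\+v)\chi$ for a bump $\chi$ supported in $W$ with $\chi\equiv1$ near $\mathrm{supp}\,h_2$. Then (i$'$) moves the multiplier across, $a(h_1,h_2)=a(h_1,\tilde h_2(\+v)\chi)=a(\tilde h_2(\+v)h_1,\chi)$, and since $\tilde h_2(\+v)=h_2$ on $W\supseteq\mathrm{supp}\,h_1$ one has $\tilde h_2(\+v)h_1=h_1h_2$, so $a(h_1,h_2)=a(h_1h_2,\chi)=\inn{\cA,h_1h_2}$. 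The general case follows by a finite partition of unity subordinate to embedding charts: expand $a(h_1,h_2)=\sum_{k,\ell}a(\rho_kh_1,\sigma_\ell h_2)$, dispatch the pairs with disjoint supports by the locality statement (both sides vanish), treat the remaining pairs, which after refining the cover lie in a common chart, by the local identity, and recombine using linearity of $\cA$. The main obstacle is exactly this factorization step: it is the only place where both injectivity (to separate supports) and the immersion property (to realize $h_2$ locally as a function of $\+v$ times a cutoff) are needed, and the partition of unity must be arranged so that every interacting pair of localized pieces sits in a single embedding chart. Conceptually this is transparent through the Schwartz kernel $K\in\mathcal D'(\+\Xi\times\+\Xi)$ of $a$: condition (i) says $K$ is annihilated by $v^j\otimes1-1\otimes v^j$, injectivity forces $\mathrm{supp}\,K$ into the diagonal, and the immersion supplies transverse coordinates that rule out transverse derivatives, so $K=\cA\otimes\delta_{\mathrm{diag}}$, which is precisely (ii).
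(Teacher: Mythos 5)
Your proof is correct, but it takes a genuinely different route from the paper's. The paper proves \ref{item:bilinear_1} $\Rightarrow$ \ref{item:bilinear_2} in one global stroke: it invokes Nachbin's theorem (the $C^\infty$ analogue of Stone--Weierstrass) to show that the subalgebra $S$ generated by $\{v^j\}_{j=1}^d$ is dense in all of $C^\infty(\+\Xi)$ --- this is the single place where the injective-immersion hypothesis enters, to verify Nachbin's separation condition on $T\+\Xi$ --- and then defines $\inn{\cA, h} := a(h, \psi)$ with a cutoff $\psi$ exactly as you do, obtaining both well-definedness and the factorization $a(h_1, h_2) = a(\psi, h_1 h_2) = \inn{\cA, h_1 h_2}$ by approximating $h_1$ by elements $\phi_m \in S$ and moving them across $a$ via iterated use of \ref{item:bilinear_1}; no charts, no extensions, no partitions of unity appear. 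You avoid Nachbin entirely: classical multivariate Weierstrass approximation with derivatives upgrades \ref{item:bilinear_1} to multipliers $\psi(\+v)$ with $\psi \in C^\infty(\bR^d)$, injectivity alone yields your locality lemma, the immersion property alone supplies the local factorization through smooth extension from embedding charts, and a partition of unity glues the pieces. The trade-off: the paper's argument is shorter but leans on Nachbin's theorem as a black box, and its density statement is stronger than what is strictly needed (arbitrary elements of $C^\infty(\+\Xi)$ can be moved across $a$, not merely pullbacks $\psi(\+v)$); your argument is more elementary and self-contained, and it cleanly separates the roles of the two hypotheses --- injectivity forces support locality of the kernel, while the immersion makes every germ of a multiplier locally of the form $\psi(\+v)$ --- which the Nachbin route bundles into one separation condition. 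The steps you only sketch (extending $h_2 \circ (\+v|_W)^{-1}$ off the embedded image via a tubular neighborhood, and the Lebesgue-number refinement ensuring that every pair of partition pieces with intersecting supports lies in a common embedding chart) are standard and complete correctly, so there is no gap.
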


From \Cref{thm:bilinear}, the following theorem can be derived 
directly, which provides a necessary and sufficient 
condition for the hyperbolicity of the kinetic equation 
\eqref{eq:gke} w.r.t. the Riemannian metric $g$.

\begin{theorem}\label{thm:bilinear_hyperbolicity}
  Under the assumptions of \Cref{thm:bilinear}, 
  the following statements are equivalent for 
  $g \in \Gamma(T^{0, 2} M)$:
  \begin{enumerate}[label=(\roman*),ref=(\roman*)]
    \item \label{item:metric_1}
    The kinetic equation \eqref{eq:gke} is hyperbolic w.r.t. the 
    Riemannian metric $g$;
    \item \label{item:metric_2}
    There exists a mapping $\cA : M \to V^*$ such that $\cA(f)$ is 
    strictly positive for each $f \in M$ and that 
    \begin{displaymath}
      g(h_1, h_2) = \inn{\cA(f), h_1 h_2},
    \end{displaymath}
    for each $h_1, h_2 \in T_{f} M$;
    \item \label{item:metric_3}
    There exists a strictly positive Radon measure $\mu^f$ on 
    the manifold $\+\Xi$ for each $f \in M$ such that 
    \begin{equation}\label{eq:kinetic_hyperbolicity}
      g(h_1, h_2) = \int_{\+\Xi} h_1 h_2 \,\rd \mu^f(\+\xi), 
    \end{equation}
    for each $h_1, h_2 \in T_f M$.
  \end{enumerate}
\end{theorem}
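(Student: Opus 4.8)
The plan is to apply \Cref{thm:bilinear} pointwise in $f$. The first observation I would record is that, since the solution manifold $M$ is an open subset of the vector space $V = C^\infty_{\rc}(\+\Xi)$, its tangent space at every point is canonically $T_f M = V$. Consequently, for each fixed $f \in M$ the metric $g|_f$ is a continuous symmetric bilinear form defined on all of $V$, and I can invoke \Cref{thm:bilinear} with $a := g|_f$. The whole theorem then reduces to running the lemma's equivalence at every $f$ and tracking two extra pieces of data: the positivity of the representing functional and its identification with a measure.

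For the equivalence of \ref{item:metric_1} and \ref{item:metric_2}, I would use that in the kinetic setting the $(1,1)$-tensor $A^j$ acts on a tangent vector $h \in T_f M = V$ as multiplication by $v^j$, i.e. $A^j h = v^j h$. Hence the hyperbolicity condition of \Cref{def:hyperbolicity}, namely $g(A^j h_1, h_2) = g(h_1, A^j h_2)$, becomes exactly $g(v^j h_1, h_2) = g(h_1, v^j h_2)$, which is condition \ref{item:bilinear_1} of \Cref{thm:bilinear} applied to $a = g|_f$. The lemma then furnishes $\cA(f) \in V^*$ with $g(h_1, h_2) = \inn{\cA(f), h_1 h_2}$, and conversely any such representation makes $A^j = v^j\,\cdot$ symmetric w.r.t. $g$. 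What remains is to upgrade $\cA(f)$ to a strictly positive functional; here positive-definiteness of the Riemannian metric enters, since $\inn{\cA(f), h^2} = g(h,h) > 0$ for every $h \neq 0$.

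For the equivalence of \ref{item:metric_2} and \ref{item:metric_3}, I would appeal to the Riesz--Markov--Kakutani representation theorem: a strictly positive functional $\cA(f)$ on $V$ is represented by integration against a unique strictly positive (full-support) Radon measure $\mu^f$ on $\+\Xi$, and substituting this into the representation of \ref{item:metric_2} yields \eqref{eq:kinetic_hyperbolicity}. The reverse implication is immediate, as integration against a strictly positive Radon measure is manifestly a strictly positive functional with $g(h_1,h_2) = \int_{\+\Xi} h_1 h_2 \,\rd\mu^f$.

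The main obstacle I anticipate is not the algebra but the passage from ``positive on squares'' to a genuine positive measure. The metric only directly supplies positivity of $\cA(f)$ on squares $h^2$, whereas Riesz requires positivity on all nonnegative test functions, and one cannot in general take a smooth square root of a nonnegative $\phi \in C^\infty_{\rc}$. My plan to bridge this gap is a mollification argument: choosing an approximate identity of the form $\sigma_\eps^2$ with $\sigma_\eps \in C^\infty_{\rc}$ nonnegative, the regularizations $\cA(f) * \sigma_\eps^2$ are pointwise nonnegative because each evaluation pairs $\cA(f)$ against a square; passing to the limit shows $\cA(f)$ is a positive distribution, hence a Radon measure, while strict definiteness of $g$ forces full support. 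On the manifold $\+\Xi$ I would carry out this step in local charts glued by a partition of unity.
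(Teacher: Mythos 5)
Your proposal is correct and follows the same skeleton as the paper's proof --- apply \Cref{thm:bilinear} pointwise in $f$ (using $T_f M = V$ for $M$ open in $V$ and $A^j h = v^j h$) to get \ref{item:metric_1} $\Leftrightarrow$ \ref{item:metric_2}, then pass to a measure via Riesz--Markov--Kakutani --- but the two arguments place the real work in different spots, and your version is arguably the more complete one. The paper builds strict positivity (meaning $\inn{\cA(f), h} \ge 0$ for \emph{every} nonnegative $h \in V$, with equality only at $h = 0$) into statement \ref{item:metric_2}, declares the equivalence with \ref{item:metric_1} ``straightforward'' by the lemma, and spends its effort on \ref{item:metric_2} $\Rightarrow$ \ref{item:metric_3}: the monotonicity bound $\sup_{h \in B_K} \inn{\cA, h} \le \inn{\cA, \psi_K}$ with cutoffs $\psi_K$ extends $\cA(f)$ from $C^\infty_{\rc}(\+\Xi)$ to a positive functional on $C_c(\+\Xi)$, where Riesz--Markov--Kakutani applies. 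You correctly identify that the genuinely delicate step actually sits inside \ref{item:metric_1} $\Rightarrow$ \ref{item:metric_2}: the metric only yields $\inn{\cA(f), h^2} = g(h, h) > 0$, i.e.\ positivity on squares, and since a nonnegative smooth function need not admit a smooth square root, positivity on all nonnegative test functions is not formal --- the paper leaves this bridge implicit. Your mollification by squared approximate identities $\sigma_\eps^2$, done in charts and glued by a partition of unity, is a standard and valid bridge (choose the partition functions as squares, $\sum_i \rho_i^2 = 1$, so each localized piece still pairs $\cA(f)$ against squares), with the limit justified by the continuity $\cA(f) \in V^*$ that the lemma's construction supplies. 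Two points you should still make explicit: first, ``positive distribution, hence Radon measure'' is essentially the paper's $\psi_K$-extension argument in disguise, so you are quoting that step rather than avoiding it (and uniqueness of $\mu^f$ also rests on this density); second, strictness of the measure needs a word --- for $h \ge 0$ nonzero, minorize $h$ by a small square bump $\rho^2 \le h$ supported where $h$ is bounded below, giving $\inn{\cA(f), h} \ge g(\rho, \rho) > 0$, which is exactly the full-support claim you attribute to the definiteness of $g$.
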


\begin{proof}
  The equivalence between \ref{item:metric_1} and \ref{item:metric_2}
  is straightforward by \Cref{thm:bilinear}. It suffices to show that 
  \ref{item:metric_2} yields \ref{item:metric_3}. 
  Recall that a functional $\cA \in V^*$ is strictly 
  positive if the inequality $\inn{\cA, h} \ge 0$ holds
  for each $h \in V$ satisfying $h \ge 0$,
  and the equality sign holds if and 
  only if $h = 0$. At this time, for each compact set 
  $K \subset \+\Xi$, one can choose $\psi_K \in V$ such that 
  $\psi_K \in [0, 1]$ and $\psi_K|_{K} \equiv 1$. Define 
  \begin{displaymath}
    B_K := \left\{ h \in V \,\Big|\, \mathrm{supp}(h) \subset K, \ 
    \norm{h}_{C^0(\+\Xi)} \le 1 \right\}.
  \end{displaymath}
  Since $\cA \in V^*$ is strictly positive, one has that 
  \begin{displaymath}
    \sup_{h \in B_K} \inn{\cA, h} \le \inn{\cA, \psi_K} < +\infty.
  \end{displaymath}
  Therefore, the functional $\cA \in V^*$ can be extended to 
  a strictly positive linear functional on $C_c(\+\Xi)$. 
  By Riesz--Markov--Kakutani representation theorem, there exists a 
  unique strictly positive Radon measure $\mu$ on $\+\Xi$ such that 
  $\inn{\cA, h} = \int_{\+\Xi} h(\+\xi) \,\rd \mu(\+\xi)$ for each 
  $h \in C_c(\+\Xi)$. Therefore, there exists a unique strictly 
  positive Radon measure $\mu^f$ on $\+\Xi$ for each $f \in M$ such 
  that $\inn{\cA(f), h} = \int_{\+\Xi} h(\+\xi) \,\rd \mu^f(\+\xi)$ 
  for each $h \in C_c(\+\Xi)$, which completes the proof of 
  \Cref{thm:bilinear_hyperbolicity}.
\end{proof}

\begin{remark}
  The function space $V$ in \Cref{thm:bilinear} is supposed to be 
  $V = C_c^\infty(\+\Xi)$. In practical applications, 
  the space $V$ should be a broader function space encompassing 
  $C_c^\infty(\+\Xi)$ as a subspace. 
  At this time, the conclusion of \Cref{thm:bilinear_hyperbolicity} 
  still holds.
  Suppose that the kinetic equation \eqref{eq:gke} is hyperbolic 
  w.r.t. the Riemannian metric $g$.
  In that case, the completion of the 
  tangent space $T_f M$ w.r.t. the inner product $g|_f$ is 
  $L^2(\+\Xi, \mu^f)$.
  This result 
  means that any information about the derivatives w.r.t. the 
  ordinate variables $\+\xi$ does not have to be considered 
  when choosing the Riemannian metric for model reduction.
\end{remark}

\subsubsection{Conservation laws and entropy dissipation}
When studying kinetic equations, it is often crucial to investigate 
the existence of conserved quantities or entropy. A necessary 
condition for a quantity to be a conserved quantity or entropy is 
that it has a flux. In this part, we employ techniques discussed 
in \Cref{sec:kinetic_hyperbolicity} to characterize quantities with 
fluxes, summarized by the following theorem.

\begin{theorem}\label{thm:bilinear_flux}
  Under the assumptions of \Cref{thm:bilinear}, if the manifold $M$ 
  is a simply connected open subset of the space $V$, then 
  the following statements are equivalent:
  \begin{enumerate}[label=(\roman*),ref=(\roman*)]
    \item \label{item:flux_1}
    The function $c \in C^\infty(M)$ possesses a flux;
    \item \label{item:flux_2}
    There exists a mapping $\cA : M \to V^*$ such that 
    for each $f \in M$ and $h_1, h_2 \in V$, one has that
    \begin{equation}\label{eq:bilinear_flux}
      \partial^2 c(f; h_1, h_2) = \inn{\cA(f), h_1 h_2},
    \end{equation}
    where $\partial^2 c$ is the second-order Gateaux derivative 
    of $c$.
  \end{enumerate}
\end{theorem}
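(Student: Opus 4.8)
The plan is to reinterpret the notion of flux as an exactness condition for a family of differential one-forms on $M$, and then to convert exactness into a pointwise algebraic identity for the Hessian of $c$, to which \Cref{thm:bilinear} applies verbatim. In the kinetic setting the operator $A^j$ acts on a tangent vector $h \in T_f M = V$ as multiplication by $v^j$, so by \Cref{def:flux} the function $c$ possesses a flux precisely when, for each $j = 1, 2, \ldots, d$, the one-form $\beta^j \in \Gamma(T^* M)$ determined by $\inn{\beta^j|_f, h} = \inn{\rd c|_f, v^j h}$ is exact, i.e. $\beta^j = \rd F_c^j$ for some $F_c^j \in C^\infty(M)$.

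First I would prove \ref{item:flux_1} $\Rightarrow$ \ref{item:flux_2}. If $c$ has a flux then each $\beta^j$ is exact, hence closed. Since $M$ is open in the vector space $V$, I would evaluate $\rd \beta^j$ on constant direction fields $h_1, h_2 \in V$, whose Lie bracket vanishes, obtaining
\begin{displaymath}
  (\rd \beta^j)(h_1, h_2)|_f = \partial^2 c(f; h_1, v^j h_2) - \partial^2 c(f; h_2, v^j h_1).
\end{displaymath}
Thus closedness of $\beta^j$, combined with the symmetry of the Hessian, is exactly the assertion that the symmetric continuous bilinear form $a_f(h_1, h_2) := \partial^2 c(f; h_1, h_2)$ obeys $a_f(v^j h_1, h_2) = a_f(h_1, v^j h_2)$ for every $j$. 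This is condition \ref{item:bilinear_1} of \Cref{thm:bilinear}, whose equivalence with \ref{item:bilinear_2} supplies, for each fixed $f$, a functional $\cA(f) \in V^*$ with $\partial^2 c(f; h_1, h_2) = \inn{\cA(f), h_1 h_2}$; collecting these over $f$ yields the mapping $\cA : M \to V^*$ of \ref{item:flux_2}.

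For the converse \ref{item:flux_2} $\Rightarrow$ \ref{item:flux_1} I would run this chain backwards. The representation $\partial^2 c(f; h_1, h_2) = \inn{\cA(f), h_1 h_2}$ makes $a_f$ satisfy \ref{item:bilinear_1} immediately, because $(v^j h_1) h_2 = h_1 (v^j h_2)$ as elements of $V$; the displayed computation then shows each $\beta^j$ is closed. This is where simple connectedness is used: by the Poincaré lemma for one-forms on the simply connected open set $M \subset V$, each closed $\beta^j$ is exact, so a potential $F_c^j$ exists---obtainable as the path integral $F_c^j(f) = \int_0^1 \inn{\beta^j|_{\gamma(t)}, \gamma'(t)} \,\rd t$, with path-independence guaranteed by closedness and simple connectedness---and the identity $\rd F_c^j = \beta^j$ is exactly the statement that $\+F_c$ is a flux of $c$.

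The principal obstacle is analytic rather than algebraic: one must justify the exterior-derivative computation and the Poincaré lemma in the infinite-dimensional setting where $M$ is open in $V = C^\infty_{\rc}(\+\Xi)$. Concretely, $c \in C^\infty(M)$ must be used to guarantee that $a_f$ is a continuous bilinear form on $V$ (so that \Cref{thm:bilinear} is applicable) and that $\beta^j$ is a smooth one-form, and the path-integral construction together with its path-independence must be established in this topological-vector-space context. By contrast, the algebraic heart---passing between the Hessian symmetry condition and the representation through $\cA$---is entirely absorbed into \Cref{thm:bilinear} and needs no additional argument.
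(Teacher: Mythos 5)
Your proposal is correct and is essentially the paper's own argument in differential-forms dress: your ``exact $\Rightarrow$ closed'' step is the paper's differentiation of $\partial F_c^j(f;h)=\partial c(f;v^jh)$ combined with symmetry of the second Gateaux derivative, and your appeal to the Poincar\'e lemma via the path integral $F_c^j(f)=\int_0^1 \partial c(\gamma(\tau); v^j\gamma'(\tau))\,\rd\tau$ is exactly the paper's construction, whose path-independence the paper verifies directly by the smooth-homotopy computation (using the identity $\partial^2 c(f; v^j h_1, h_2)=\partial^2 c(f; v^j h_2, h_1)$ supplied by \cref{eq:bilinear_flux}), i.e.\ by proving the needed infinite-dimensional Poincar\'e lemma by hand rather than citing it. The analytic caveats you flag are precisely what that explicit computation discharges, so there is no substantive gap.
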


\begin{proof}
  Let us prove \Cref{thm:bilinear_flux} in the following order:
  \ref{item:flux_1} $\Rightarrow$ \ref{item:flux_2} $\Rightarrow$ 
  \ref{item:flux_1}.

  \ref{item:flux_1} $\Rightarrow$ \ref{item:flux_2}. 
  By definition, there exists a function 
  $\+F_c \in C^\infty(M, \bR^d)$ such that for each $f \in M$ and 
  $h \in V$, one has that 
  $\partial F_c^j(f; h) = \partial c(f; v^j h)$, which yields that 
  \begin{displaymath}
    \partial^2 c(f; v^j h_1, h_2) = \partial^2 F_c^j(f; h_1, h_2) = 
    \partial^2 F_c^j(f; h_2, h_1) = \partial^2 c(f; v^j h_2, h_1).
  \end{displaymath}
  By \Cref{thm:bilinear}, the statement \ref{item:flux_2} holds.

  \ref{item:flux_2} $\Rightarrow$ \ref{item:flux_1}. 
  Let us choose a function $f_0 \in M$. For each $f \in M$, there 
  exists a smooth path $\gamma : [0, 1] \to M$ such that 
  $\gamma(0) = f_0$ and $\gamma(1) = f$. Define 
  \begin{displaymath}
    F^j_{c}(f) = \int_{0}^{1} 
    \partial c(\gamma(\tau); v^j \gamma'(\tau)) \,\rd \tau.
  \end{displaymath}
  It suffices to show that this definition is independent of the 
  choice of the path. Suppose that two paths, $\gamma_0$ and 
  $\gamma_1$, satisfy the conditions above. Since the manifold $M$ 
  is simply connected, there exists a smooth homotopy 
  $\gamma : [0, 1] \times [0, 1] \to M$ such that 
  $\gamma(0, \cdot) = \gamma_0$, $\gamma(1, \cdot) = \gamma_1$, 
  $\gamma(\cdot, 0) \equiv f_0$, and $\gamma(\cdot, 1) \equiv f$.
  Therefore, one obtains that 
  \begin{displaymath}
    \begin{aligned}
      & \pd{}{s} \int_{0}^{1} \partial c \big( \gamma(s, \tau); v^j 
      \tfrac{\partial \gamma}{\partial \tau} (s, \tau) \big) 
      \,\rd \tau \\
      =\ & \int_{0}^{1} \partial^2 c \big( \gamma(s, \tau); v^j 
      \tfrac{\partial \gamma}{\partial \tau} (s, \tau), 
      \tfrac{\partial \gamma}{\partial s} (s, \tau) \big) + 
      \partial c \big( \gamma(s, \tau); v^j 
      \tfrac{\partial^2 \gamma}{\partial \tau \partial s} (s, \tau) 
      \big) \,\rd \tau \\
      =\ & \int_{0}^{1} \pd{}{\tau} 
      \partial c \big( \gamma(s, \tau); v^j 
      \tfrac{\partial \gamma}{\partial s} (s, \tau) \big) 
      \,\rd \tau = \partial c \big( f; v^j 
      \tfrac{\partial \gamma}{\partial s} (s, 1) \big) - 
      \partial c \big( f_0; v^j 
      \tfrac{\partial \gamma}{\partial s} (s, 0) \big) \\ 
      =\ & 0,
    \end{aligned}
  \end{displaymath}
  which completes the proof of this theorem.
\end{proof}

\begin{remark}
  Roughly speaking, the meaning of \cref{eq:bilinear_flux} is that 
  the Hessian operator $\partial^2 c$ of the quantity $c(f)$ 
  in the ordinate representation is analogous to a diagonal matrix.
  Suppose that the manifold $M$ is the Euclidean space $V = \bR^n$ 
  and that the Hessian matrix of the function $c : M \to \bR$ 
  satisfies \cref{eq:bilinear_flux}, where $h_1 h_2$ represents
  component-wise multiplication. 
  One can obtain that the Hessian matrix 
  $\partial^2 c$ is diagonal, and the function $c$ has the form of 
  $c(f) = c_k(f^k)$, where $f = (f^1, f^2, \ldots, f^n) \in M$, and 
  $c_k : \bR \to \bR$ is an arbitrary smooth function for each 
  $k = 1, 2, \ldots, n$.
\end{remark}

\begin{remark}
  According to \Cref{thm:bilinear_flux} and by induction, 
  if the quantity $c$ has a flux, then the following equality holds:
  \begin{equation}\label{eq:bilinear_flux_k}
    \partial^k c(f; h_1, h_2, \ldots, h_k) = 
    \inn{\cA_k(f), h_1 h_2 \cdots h_k}, 
  \end{equation}
  for each $k \ge 2$, where $\cA_2(f) = \cA(f)$ and 
  \begin{displaymath}
    \partial \cA_k(f; h) = \cA_{k+1}(f) \circ 
    [\text{multiplication operator by } h].
  \end{displaymath}
\end{remark}

The significance of \Cref{thm:bilinear_flux} is that it gives a 
verifiable way to determine whether a quantity has a flux. Let us 
consider the following examples.

\begin{example}
  The quantity $c$ in the form of 
  \begin{displaymath}
    c(f) = \inn{\cB, f^{\otimes k}},
  \end{displaymath}
  can never be a candidate for a conserved quantity or entropy, where 
  $k \ge 2$, the function $f^{\otimes k}$ is defined as 
  \begin{displaymath}
    f^{\otimes k} : (\+\xi_1, \+\xi_2, \ldots, \+\xi_k) \in \+\Xi^k 
    \mapsto f(\+\xi_1) f(\+\xi_2) \cdots f(\+\xi_k) \in \bR,
  \end{displaymath}
  the functional $\cB \in C^\infty_{\rc}(\+\Xi^k)^*$ is symmetric 
  under all permutations of its variables 
  $\+\xi_1, \+\xi_2, \ldots, \+\xi_k \in \+\Xi$,
  and the support set of $\cB$ is not included in the set 
  \begin{displaymath}
    \left\{ (\+\xi_1, \+\xi_2, \ldots, \+\xi_k) \in \+\Xi^k 
    \,\big|\, \+\xi_1 = \+\xi_2 = \cdots = \+\xi_k \in \+\Xi \right\}.
  \end{displaymath}
  At this time, the $k$th-order derivative of $c(f)$ is 
  \begin{multline*}
    \partial^k c(f; h_1, h_2, \ldots, h_k) \\
    = \sum_{\sigma \in S_k} 
    \inn{\cB, h_{\sigma(1)} \otimes h_{\sigma(2)} \otimes 
    \cdots \otimes h_{\sigma(k)}} 
    = k! \inn{\cB, h_1 \otimes h_2 \otimes \cdots \otimes h_k},
  \end{multline*}
  where $S_k$ is the symmetric group of degree $k$.
  By the assumption on the support set of $\cB$, one can choose 
  the functions $h_1, h_2, \ldots, h_k \in V$ such that the 
  intersection of all the support sets of $h_1, h_2, \ldots, h_k$ is 
  empty and that $\partial^k c(f; h_1, h_2, \ldots, h_k) \ne 0$, 
  which contradicts \cref{eq:bilinear_flux_k}. 
\end{example}

\begin{example}
  \label{exp:flux_convolution}
  Another example of a quantity without a flux is 
  \begin{displaymath}
    c(f) = \int_{\+\Xi} G(f * \phi) \,\rd \+\xi,
  \end{displaymath}
  where the manifold $\+\Xi$ is the Euclidean space $\bR^d$, the 
  smooth function $G$ is nonlinear such that $G''(f * \phi)$ does not 
  identically equal to zero, and the support set of 
  $\phi \in C^\infty_{\rc}(\bR^d)^*$ contains more than two points.
  At this time, the second-order derivative of $c(f)$ is 
  \begin{displaymath}
    \partial^2 c(f; h_1, h_2) = 
    \int_{\+\Xi} G''(f * \phi) (h_1 * \phi) (h_2 * \phi) \,\rd \+\xi.
  \end{displaymath}
  By the assumption on the function $G$ and the support set of $\phi$,
  one can choose the functions $h_1, h_2 \in V$ such that the 
  intersection of the support sets of $h_1$ and $h_2$ is empty and 
  that $\partial^2 c(f; h_1, h_2) \ne 0$, which contradicts 
  \cref{eq:bilinear_flux}.
\end{example}

Let us explore two fundamental categories of quantities 
satisfying \cref{eq:bilinear_flux}.

The first category of quantities with fluxes is given by 
\begin{equation}\label{eq:kinetic_conserve}
  c(f) = \inn{\fc, f}, 
\end{equation}
where $\fc \in V^*$ is a linear functional on the space $V$. 
The first-order derivative of $c(f)$ is
$\partial c(f; h) = \inn{\fc, h}$, and the second-order derivative
is given by \cref{eq:bilinear_flux} with $\cA(f) = 0$.
Therefore, the quantity in this representation has a flux, and the 
corresponding flux is $F^j(f) = \inn{\fc, v^j f}$. 
Such quantities in this category collectively form a linear space, 
and they are particularly suitable for the conserved quantities, 
which are one of the significant concerns in kinetic equations. 
\begin{definition}[Conserved quantities for kinetic equations]
  We say that the quantity $c$ in the form of 
  \cref{eq:kinetic_conserve} is a \textit{conserved quantity} for the 
  kinetic equation \eqref{eq:gke} if the equality  
  $\inn{\fc, Q[f]} = 0$ holds for each $f \in M$. 
  Moreover, we define the space $\bE$ of 
  \textit{collision invariants} for \cref{eq:gke} as 
  \begin{equation}\label{eq:collisionInv}
    \bE := \left\{ \fc \in V^* \,\big|\, \inn{\fc, Q[f]} = 0, 
    \forall f \in M \right\}.
  \end{equation}
\end{definition}

Another general category of quantities with fluxes 
can be characterized by the following integral representation,
\begin{equation}\label{eq:kinetic_entropy}
  c(f) = \int_{\+\Xi} \eta(\+\xi, f(\+\xi)) \,\rd \mu(\+\xi),
\end{equation}
where the measure $\mu$ on $\+\Xi$ and the mapping 
$\eta : \+\Xi \times \bR \to \bR$ are chosen to ensure that $c(f)$ is 
well-defined and that the mapping $\eta$ is smooth w.r.t. 
its second variable. 
The first-order derivative of $c(f)$ is $\partial c(f; h) = 
\int_\+\Xi \tfrac{\partial \eta}{\partial f}(\+\xi, f(\+\xi)) 
h(\+\xi) \,\rd\mu(\+\xi)$,
and the second-order derivative is 
\begin{displaymath}
  \partial^2 c(f; h_1, h_2) = \int_{\+\Xi} 
  \dfrac{\partial^2 \eta}{\partial f^2}(\+\xi, f(\+\xi))
  h_1(\+\xi) h_2(\+\xi) \,\rd \mu(\xi),
\end{displaymath}
following \cref{{eq:bilinear_flux}}, where the functional 
$\cA(f)$ is a multiplication operator by the function 
$\tfrac{\partial^2 \eta}{\partial f^2}(\+\xi, f(\+\xi))$.
The quantity $c(f)$ has a flux according to \Cref{thm:bilinear_flux}.
The corresponding flux is given by 
$F^j(f) = 
\int_{\+\Xi} v^j(\+\xi) \eta(\+\xi, f(\+\xi)) \,\rd\mu(\+\xi)$.
Furthermore, we assert that all quantities in this representation 
collectively form a linear space. Let us consider two such 
quantities, 
$c_1(f) = \int_{\+\Xi} \eta_1(\+\xi, f(\+\xi)) \,\rd \mu_1(\+\xi)$ 
and 
$c_2(f) = \int_{\+\Xi} \eta_2(\+\xi, f(\+\xi)) \,\rd \mu_2(\+\xi)$. 
The measures $\mu_1$ and $\mu_2$ are absolutely continuous w.r.t. the 
measure $\mu_1 + \mu_2$, which yields that there 
exist two functions $m_1, m_2 \in L^1(\+\Xi, \mu_1 + \mu_2)$ such 
that $\rd \mu_k (\+\xi) = m_k(\+\xi) \rd (\mu_1 + \mu_2)(\+\xi)$, 
where $k = 1, 2$. Thus, 
\begin{displaymath}
  (c_1 + c_2)(f) = \int_{\+\Xi} 
  \big( \eta_1(\+\xi, f(\+\xi)) m_1(\+\xi) + 
  \eta_2(\+\xi, f(\+\xi)) m_2(\+\xi) \big) 
  \,\rd (\mu_1 + \mu_2)(\+\xi),
\end{displaymath}
which yields that the quantities in this representation are closed 
under addition. Moreover, they are closed under scalar 
multiplication, which confirms the assertion.
This category holds particular relevance in 
the study of entropy. The H-theorem, considered essential due to 
Boltzmann's definition of entropy for gas-particle systems, plays a 
significant role in kinetic equations. For the kinetic equation 
\eqref{eq:gke} in the general form, let us define the entropy 
as follows.
\begin{definition}[Entropy for kinetic equations]
  \label{def:kinetic_entropy}
  We say that the quantity $c$ in the form of 
  \cref{eq:kinetic_entropy} is an 
  \textit{entropy} for the kinetic equation \eqref{eq:gke}
  if the inequality 
  $\int_\+\Xi \tfrac{\partial \eta}{\partial f}(\+\xi, f(\+\xi)) 
  Q[f] \,\rd\mu(\+\xi) \le 0$ holds
  for each $f \in M$. 
\end{definition}

These definitions are consistent with the conserved 
quantities and entropy defined in \Cref{sec:framework_flux}
for the quantities in the specific form of 
\cref{eq:kinetic_conserve,eq:kinetic_entropy}.

\begin{example}
  Let us give an example of the quantity $c(f)$ that satisfies 
  $\inn{\rd c, Q} \le 0$ but does not have a flux and thus is not 
  an entropy.
  Let us consider the Landau equation as follows,
  \begin{displaymath}
    \pd{f}{t} + \+\xi \cdot \nabla_{\+x} f = Q[f] := 
    \nabla_{\+\xi} \cdot \int_{\bR^d} A(\+\xi - \+\xi_*) 
    \left( \nabla_{\+\xi} \frac{\delta H}{\delta f} - 
    \nabla_{\+\xi_*} \frac{\delta H_*}{\delta f_*} \right) f f_*
    \,\rd \+\xi_*,
  \end{displaymath}
  where $\+\xi \in \+\Xi = \bR^d$ is the ordinate variables,
  $f = f(\+\xi)$ and $f_* = f(\+\xi_*)$ are used,  
  the matrix $A(\+z) := |\+z|^\gamma (|\+z^2| I_d - \+z \otimes \+z)$
  with $-d-1 \le \gamma \le 1$
  is symmetric and positive semi-definite, and 
  $H(f) = \int_{\bR^d} f \log f \,\rd \+\xi$ is the entropy since 
  it takes the form of \cref{eq:kinetic_entropy} and satisfies 
  \begin{multline*}
    \inn{\partial H(f), Q[f]} = - \int_{\bR^d} \int_{\bR^d} 
    \left( \nabla_{\+\xi} \frac{\delta H}{\delta f} - 
    \nabla_{\+\xi_*} \frac{\delta H_*}{\delta f_*} \right) \cdot \\
    A(\+\xi - \+\xi_*) 
    \left( \nabla_{\+\xi} \frac{\delta H}{\delta f} - 
    \nabla_{\+\xi_*} \frac{\delta H_*}{\delta f_*} \right) f f_*
    \,\rd \+\xi_* \,\rd \+\xi \le 0.
  \end{multline*}
  In \cite{Carrillo2020particle}, they considered the regularized 
  spatially homogeneous Landau equation as follows to discuss their 
  new particle method, 
  \begin{displaymath}
    \pd{f}{t} = Q_{\epsilon}[f] := 
    \nabla_{\+\xi} \cdot \int_{\bR^d} A(\+\xi - \+\xi_*) 
    \left( \nabla_{\+\xi} \frac{\delta H_{\epsilon}}{\delta f} - 
    \nabla_{\+\xi_*} \frac{\delta H_{\epsilon,*}}{\delta f_*} 
    \right) f f_* \,\rd \+\xi_*,
  \end{displaymath}
  where the regularized version of $H(f)$ is 
  \begin{displaymath}
    H_{\epsilon}(f) = \int_{\bR^d} (f * \psi_\epsilon) 
    \log(f * \psi_\epsilon) \,\rd \+\xi, 
  \end{displaymath}
  and the mollifier $\psi_\epsilon$ is 
  \begin{displaymath}
    \psi_\epsilon(\+\xi) = \frac{1}{(2 \pi \epsilon)^{d / 2}}
    \exp \left( - \frac{|\+\xi|^2}{2 \epsilon} \right).
  \end{displaymath}
  This strategy was also used in the nonlinear Fokker--Planck 
  equation \cite{Carrillo2019blob}. 
  At this time, one can obtain that 
  \begin{multline*}
    \inn{\partial H_\epsilon(f), Q_\epsilon[f]} 
    = - \int_{\bR^d} \int_{\bR^d} 
    \left( \nabla_{\+\xi} \frac{\delta H_\epsilon}{\delta f} - 
    \nabla_{\+\xi_*} \frac{\delta H_{\epsilon,*}}{\delta f_*} \right) 
    \cdot \\ A(\+\xi - \+\xi_*) 
    \left( \nabla_{\+\xi} \frac{\delta H_\epsilon}{\delta f} - 
    \nabla_{\+\xi_*} \frac{\delta H_{\epsilon,*}}{\delta f_*} \right)
    f f_* \,\rd \+\xi_* \,\rd \+\xi \le 0.
  \end{multline*}  
  However, the quantity $H_\epsilon(f)$ is not an entropy 
  for the regularized spatially inhomogeneous Landau equation since 
  it does not have a flux,
  according to \Cref{exp:flux_convolution}.
\end{example}

\subsubsection{Finite propagation speed}
For the sake of simplicity, let us suppose that the tangent space 
$T_f M$ is a Hilbert space w.r.t. the inner product $g|_f$.
By \Cref{thm:bilinear_hyperbolicity}, one has that 
$T_f M = L^2(\+\Xi, \mu^f)$. For each $\+\sigma \in \bS^{d-1}$, the 
operator $\+\sigma \cdot \+A|_f$ is the multiplication operator by 
the function $\+\sigma \cdot \+v$. Its spectrum is the 
essential range of the function $\+\sigma \cdot \+v$, i.e., 
\begin{displaymath}
  \left\{ \lambda \in \bR \,\Big|\, \mu^f \left( \left\{ \+\xi \in 
  \+\Xi \,\big|\, |\+\sigma \cdot \+v(\+\xi) - \lambda|  < 
  \varepsilon \right\} \right) > 0, \ \forall \varepsilon > 0  
  \right\}.
\end{displaymath}
Note that the mapping $\+\sigma \cdot \+v : \+\Xi \to \bR$ is 
continuous and that the measure $\mu^f$ is strictly positive. 
Therefore, the spectrum of $\+\sigma \cdot \+A |_f$ is exactly the 
range of the function $\+\sigma \cdot \+v$. We have proven the 
following proposition.

\begin{proposition}\label{thm:kinetic_speed}
  If the kinetic equation \eqref{eq:gke} is hyperbolic w.r.t. the 
  Riemannian metric $g$ and the tangent space $T_f M$ is a 
  Hilbert space w.r.t. the inner product $g|_f$ for each 
  $f \in M$, then the maximum propagation speed of \cref{eq:gke} is 
  \begin{displaymath}
    \rho(\+A|_f) = \sup_{\+\xi \in \+\Xi} |\+v(\+\xi)|.
  \end{displaymath}
\end{proposition}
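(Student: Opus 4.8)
The plan is to combine the spectral computation carried out in the paragraph immediately preceding the statement with the definition of the maximum propagation speed and an elementary Cauchy--Schwarz argument. By \Cref{thm:bilinear_hyperbolicity}, the hyperbolicity assumption together with the Hilbert space assumption forces $T_f M = L^2(\+\Xi, \mu^f)$, and for each $\+\sigma \in \bS^{d-1}$ the operator $\+\sigma \cdot \+A|_f$ is multiplication by the continuous function $\+\sigma \cdot \+v$. Since $\mu^f$ is strictly positive and $\+\sigma \cdot \+v$ is continuous, its spectrum coincides with the range of $\+\sigma \cdot \+v$, so that its spectral radius is
\[
  \rho(\+\sigma \cdot \+A|_f) = \sup_{\+\xi \in \+\Xi} |\+\sigma \cdot \+v(\+\xi)|.
\]

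First I would recall the definition $\rho(\+A|_f) = \sup_{\+\sigma \in \bS^{d-1}} \rho(\+\sigma \cdot \+A|_f)$ and substitute the expression above, obtaining $\rho(\+A|_f) = \sup_{\+\sigma \in \bS^{d-1}} \sup_{\+\xi \in \+\Xi} |\+\sigma \cdot \+v(\+\xi)|$. Next I would interchange the two suprema, which is valid because they range over a product of index sets, to arrive at $\sup_{\+\xi \in \+\Xi} \sup_{\+\sigma \in \bS^{d-1}} |\+\sigma \cdot \+v(\+\xi)|$.

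Finally, for each fixed $\+\xi$ I would apply the Cauchy--Schwarz inequality: for any $\+w \in \bR^d$ one has $|\+\sigma \cdot \+w| \le |\+\sigma|\,|\+w| = |\+w|$ whenever $\+\sigma \in \bS^{d-1}$, with equality attained at $\+\sigma = \+w / |\+w|$ for $\+w \ne 0$ (and trivially for $\+w = 0$). Hence $\sup_{\+\sigma \in \bS^{d-1}} |\+\sigma \cdot \+v(\+\xi)| = |\+v(\+\xi)|$, and taking the supremum over $\+\xi \in \+\Xi$ yields the claimed identity $\rho(\+A|_f) = \sup_{\+\xi \in \+\Xi} |\+v(\+\xi)|$, the formula being understood in the extended reals when $\+v$ is unbounded.

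Since the heavy lifting---identifying $T_f M$ with $L^2(\+\Xi, \mu^f)$ and computing the spectrum of the multiplication operator---is already completed in the discussion before the statement, I do not expect a genuine obstacle here. The one point deserving care is the passage from the spectral radius to the plain supremum of $|\+\sigma \cdot \+v|$, rather than an essential supremum; this is precisely where the strict positivity of $\mu^f$ and the continuity of $\+v$ are used, guaranteeing that the essential range and the range of $\+\sigma \cdot \+v$ coincide.
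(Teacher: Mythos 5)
Your proposal is correct and follows essentially the same route as the paper, whose entire proof is the spectral computation in the preceding paragraph identifying $T_f M$ with $L^2(\+\Xi, \mu^f)$ and the spectrum of the multiplication operator $\+\sigma \cdot \+A|_f$ with the range of $\+\sigma \cdot \+v$ (using strict positivity of $\mu^f$ and continuity of $\+v$, exactly the point you flag). The only difference is that you make explicit the final step --- interchanging the suprema and applying Cauchy--Schwarz to get $\sup_{\+\sigma \in \bS^{d-1}} |\+\sigma \cdot \+v(\+\xi)| = |\+v(\+\xi)|$ --- which the paper leaves implicit.
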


\subsubsection{Linear stability and H-theorem}
\label{sec:kinetic_linear}
One of the main features of many collisional kinetic equations is 
their tendency to converge an equilibrium distribution,  
often characterized by H-theorem \cite{Villanireview}. 
We would like to consider the relation between the linear stability 
conditions and the H-theorem for the kinetic equation \eqref{eq:gke}.

\begin{definition}
  \label{def:Hthm}
  We say that the kinetic equation \eqref{eq:gke} satisfies 
  \textit{H-theorem} if there exists a real-valued function $H$ 
  defined on a convex open subset $U \subset V$ 
  containing the equilibrium 
  manifold $\tilde{M}$
  with strictly positive definite second-order Fr\'echet derivative 
  $\partial^2 H(f)$ for each $f \in U$
  such that the following conditions hold:
  \begin{enumerate}[label=H\arabic*.,ref=H\arabic*]
    \item \label{item:H1}
    For each $\tilde{f} \in \tilde{M}$, one has that 
    $\partial H(\tilde{f}) \in \bE$, where $\bE$ is the space of 
    collision invariants for \cref{eq:gke} 
    defined by \cref{eq:collisionInv}.

    \item \label{item:H2}
    For each $f \in U \cap M$, the following inequality holds,
    \begin{displaymath}
      S(f) := \inn{\partial H(f), Q[f]} \le 0,
    \end{displaymath}
    and the equality sign holds if and only if $f \in \tilde{M}$.
  \end{enumerate}
\end{definition}

\begin{remark}
  \label{rmk:Hthm}
  H-theorem says that the equilibrium distribution achieves the 
  minimum of the entropy under constraints imposed by the 
  conservation laws \cite{Villanireview}.
  The condition \ref{item:H2} yields that the quantity $H$ is an 
  entropy for \cref{eq:gke} according to \Cref{def:cqe}. 
  Suppose that a basis of the space $\bE$ is 
  $\{ \fc_i \,|\, i \in \cI \}$, where $\cI$ is the 
  index set. Let us consider the following optimization problem:
  \begin{equation}\label{eq:H-opt}
    \begin{aligned}
      \min_{f \in U} \ \ & H(f), \\ \text{s.t.} \ \ & 
      \inn{\fc_i, f} = c_i \in \bR, \quad \forall i \in \cI.
    \end{aligned}
  \end{equation}
  The critical points of this optimization problem are contained in 
  the equilibrium set,
  \begin{equation}\label{eq:H-equi}
    \tilde{M} = \{ f \in M \,|\, \partial H(f) \in \bE \},
  \end{equation}
  according to \Cref{def:Hthm}.
\end{remark}

We need to make the following assumption on the Riemannian metric $g$.

\begin{assumption}
  \label{assump:metric}
  The Riemannian metric $g$ satisfies \cref{eq:kinetic_hyperbolicity} 
  to ensure the hyperbolicity of the kinetic equation \eqref{eq:gke}, 
  and satisfies 
  \begin{equation}\label{eq:kinetic_natural}
    g(\cdot, \cdot) |_{\iota(\tilde{f})} = \partial^2 H(\tilde{f}),
  \end{equation}
  for each $\tilde{f} \in \tilde{M}$. 
\end{assumption}

\begin{remark}
  \Cref{eq:kinetic_natural} in \Cref{assump:metric} is natural when 
  the system has an entropy. For example, let us consider
  the case when the manifold $M$ is an open set in a 
  finite-dimensional Euclidean space. If the system has an entropy 
  $H(u)$, then the symmetrizer $A^0|_u$,
  which determines the Riemannian metric to ensure hyperbolicity, 
  should be the Hessian of the entropy, i.e., $\partial^2 H(u)$.
  This observation is classical due to 
  Godunov \cite{Godunov1961interesting},
  Friedrichs and Lax \cite{Friedrichs1971conservation}, and 
  Boillat \cite{Boillat1974} for conservation laws.
\end{remark}

Before investigating the linear stability conditions for the 
kinetic equation \eqref{eq:gke}, let us verify whether \cref{eq:gke}
satisfies \Cref{assump:equilibrium}, which H-theorem and 
\Cref{assump:metric} can guarantee.

\begin{lemma}
  \label{thm:HassumpEqui}
  Suppose that the Riemannian metric $g$ satisfies 
  \Cref{assump:metric}. If the kinetic equation \eqref{eq:gke} 
  satisfies H-theorem, then 
  \cref{eq:gke} satisfies \Cref{assump:equilibrium}.
\end{lemma}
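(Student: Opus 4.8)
The plan is to verify the two items of \Cref{assump:equilibrium} separately, extracting the submanifold structure of $\tilde M$ from H-theorem and using \Cref{assump:metric} to certify a physically natural, moment-preserving projection.

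First I would record the characterization $\tilde M = \{f \in M : \partial H(f) \in \bE\}$ of \cref{eq:H-equi}: the inclusion $\subseteq$ is precisely \ref{item:H1}, while if $\partial H(f) \in \bE$ then \cref{eq:collisionInv} forces $S(f) = \inn{\partial H(f), Q[f]} = 0$, so \ref{item:H2} gives $f \in \tilde M$. Viewing $\partial H$ as a map from the convex open set $U$ into $V^*$, strict positive-definiteness of $\partial^2 H(f)$ makes it a local diffeomorphism onto its image (in the appropriate functional-analytic setting, $\partial^2 H(f)$ is a topological isomorphism $V \to V^*$). Then $\tilde M = (\partial H)^{-1}(\bE)$ is the preimage of the finite-dimensional linear subspace $\bE$ under a local diffeomorphism, hence a smooth embedded submanifold, with tangent space $\iota_* T_{\tilde u} \tilde M = \{Z : \partial^2 H(\tilde u)(Z, \cdot) \in \bE\}$. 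This disposes of the first item.

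For the second item I would take $\pi$ to be the moment-matching projection: for $f$ near $\tilde M$, let $\pi(f)$ be the unique $\tilde f \in \tilde M$ with $\inn{\fc_i, \tilde f} = \inn{\fc_i, f}$ for every $i \in \cI$, where $\{\fc_i\}$ is a basis of $\bE$. Well-posedness and smoothness follow from the implicit function theorem once the restriction to $\tilde M$ of the moment map $\tilde f \mapsto (\inn{\fc_i, \tilde f})_i$ is seen to be a local diffeomorphism: its differential sends $\iota_* \tilde Y$ to $(\inn{\fc_i, \iota_* \tilde Y})_i$, and since the tangent-space description above together with \cref{eq:kinetic_natural} gives $g(\iota_* \tilde Y, \cdot)|_{\iota(\tilde u)} \in \bE$, a vanishing image forces $g(\iota_* \tilde Y, \iota_* \tilde Y)|_{\iota(\tilde u)} = 0$ and hence $\tilde Y = 0$. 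The identity $\pi \circ \iota = \id_{\tilde M}$ is immediate from uniqueness. The heart of the argument is \cref{eq:proj_require}, which by \Cref{rmk:proj} is equivalent to $X - \iota_* \pi_* X$ being $g$-orthogonal to $\iota_* T_{\tilde u} \tilde M$ at $\iota(\tilde u)$. Differentiating the constraints defining $\pi$ yields $\inn{\fc_i, X - \iota_* \pi_* X} = 0$ for all $i$, so the residual is annihilated by $\bE$; differentiating $\partial H(f) \in \bE$ along $\tilde M$ and applying \cref{eq:kinetic_natural} yields $g(\iota_* \tilde Y, \cdot)|_{\iota(\tilde u)} \in \bE$ for each $\tilde Y \in T_{\tilde u} \tilde M$. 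Pairing these two observations gives $g(\iota_* \tilde Y, X - \iota_* \pi_* X)|_{\iota(\tilde u)} = 0$, the desired orthogonality.

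I expect the genuine difficulty to be functional-analytic rather than algebraic: ensuring $\partial^2 H(f)$ is an isomorphism so that $\partial H$ is a local diffeomorphism, and establishing existence, uniqueness, and smooth dependence of the moment-matching projection. These are the places where strict positive-definiteness of $\partial^2 H$ and the finite-dimensionality of $\bE$ (equivalently of $\tilde M$) are indispensable, whereas the two differentiations producing \cref{eq:proj_require} are short once the setting is in place.
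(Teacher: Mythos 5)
Your proof is correct and essentially coincides with the paper's: the paper defines $\pi$ as the solution of the constrained entropy minimization \eqref{eq:H-opt}, which is exactly your moment-matching projection, and it derives \cref{eq:proj_require} by the same two differentiations --- of the moment constraints (giving $\inn{\fc, \iota_* \pi_* h} = \inn{\fc, h}$) and of $\partial H(\tilde{f}^\kappa) \in \bE$ along curves in $\tilde{M}$ (giving $g(\iota_* \tilde{h}, \cdot)|_{\iota(\tilde{f})} \in \bE$ via \cref{eq:kinetic_natural}) --- combined by pairing. The only differences are presentational: you justify local existence and smoothness of $\pi$ by the implicit function theorem and explicitly verify the embedded-submanifold structure of $\tilde{M}$ (under the added hypothesis that $\partial^2 H(f)$ is a topological isomorphism), points the paper leaves implicit in its convex-optimization construction.
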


\begin{proof}
  According to H-theorem, the equilibrium manifold $\tilde{M}$ is 
  given by \cref{eq:H-equi}.
  Given a distribution function $f \in M$, let us solve the convex 
  optimization problem \eqref{eq:H-opt} with the parameters
  $c_i = \inn{\fc_i, f}$ and obtain the optimal solution 
  $\pi(f) \in \tilde{M}$, which defines a projection 
  $\pi : M \to \tilde{M}$ satisfying that 
  $\pi \circ \iota = \id_{\tilde{M}}$.
  For each $f \in U \cap M$ and each $h \in T_f M$, choose a curve 
  $\kappa \mapsto f^\kappa = f + \kappa h$ in $U \cap M$. 
  By the definition of the projection $\pi$, one can obtain that 
  $\inn{\fc, \iota(\pi(f^\kappa))} = \inn{\fc, f^\kappa}$
  and thus, $\inn{\fc, \iota_* \pi_* h} = \inn{\fc, h}$.
  For each $\tilde{f} \in \tilde{M}$ and each 
  $\tilde{h} \in T_{\tilde{f}} \tilde{M}$, choose a curve 
  $\kappa \mapsto \tilde{f}^\kappa$ in $\tilde{M}$ satisfying that 
  \begin{displaymath}
    \tilde{f}^0 = \tilde{f}, \quad 
    \od{\tilde{f}^\kappa}{\kappa} \bigg|_{\kappa = 0} = \tilde{h}.
  \end{displaymath}
  By \cref{eq:H-equi}, one has that 
  $\partial H(\tilde{f}^\kappa) \in \bE$, which yields that 
  \begin{equation}\label{eq:H-bE}
    g(\iota_* \tilde{h}, \cdot)|_{\iota(\tilde{f})} = 
    \partial^2 H(\tilde{f})(\tilde{h}, \cdot) \in \bE,
  \end{equation}
  thanks to \Cref{assump:metric}. Therefore, 
  \begin{displaymath}
    \tilde{g}(\tilde{h}, \pi_* h) |_{\tilde{f}} = 
    g(\iota_* \tilde{h}, h) |_{\iota(\tilde{f})},
  \end{displaymath}
  for each $\tilde{f} \in \tilde{M}$, $h \in T_{\iota(\tilde{f})} M$,
  and $\tilde{h} \in T_{\tilde{f}} \tilde{M}$, which completes the 
  proof.
\end{proof}

The following theorem, roughly speaking, 
says that the H-theorem yields GWSC.

\begin{theorem}\label{thm:H->GWSC}
  Suppose that the Riemannian metric $g$ satisfies 
  \Cref{assump:metric}. If the kinetic equation \eqref{eq:gke} 
  satisfies H-theorem, 
  then it satisfies GWSC on $(M, g)$. 
\end{theorem}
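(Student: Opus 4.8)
The plan is to verify in turn the three ingredients of GWSC, relying on \Cref{thm:HassumpEqui}, which already guarantees that \cref{eq:gke} satisfies \Cref{assump:equilibrium}; in particular the equilibrium manifold $\tilde{M}$ and the projection $\pi$ are available, and \cref{eq:stability_1_1} holds automatically by \Cref{rmk:stability_1_1}. Condition \ref{item:stability_2} is immediate: \Cref{assump:metric} forces $g$ to have the integral form \eqref{eq:kinetic_hyperbolicity}, which by \Cref{thm:bilinear_hyperbolicity} is exactly the statement that \cref{eq:gke} is hyperbolic w.r.t. $g$. Throughout I will use the recurring facts that $Q[\tilde{f}] = 0$ for $\tilde{f} \in \tilde{M}$, that by \Cref{rmk:diff_g} the covariant derivative $D_X Q|_{\iota(\tilde{f})}$ is metric-independent and hence equals the Gateaux derivative $\partial Q(\tilde{f}; X)$, and that $g|_{\iota(\tilde{f})} = \partial^2 H(\tilde{f})$ by \eqref{eq:kinetic_natural}.

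For \cref{eq:stability_1_2} I would argue as follows. By injectivity of $\iota_*$ and the characterization \eqref{eq:proj_require} of $\pi$, the identity $\iota_* \pi_* D_X Q|_{\iota(\tilde{f})} = 0$ is equivalent to $g(D_X Q, \iota_* \tilde{Y})|_{\iota(\tilde{f})} = 0$ for every $\tilde{Y} \in \Gamma(T \tilde{M})$. Now \eqref{eq:H-bE}, established in the proof of \Cref{thm:HassumpEqui}, shows that $\fc := g(\iota_* \tilde{Y}, \cdot)|_{\iota(\tilde{f})}$ lies in the space $\bE$ of collision invariants. Since every $\fc \in \bE$ satisfies $\inn{\fc, Q[f]} = 0$ identically in $f$, differentiating in the direction $X$ gives $\inn{\fc, \partial Q(\tilde{f}; X)} = 0$; as $D_X Q|_{\iota(\tilde{f})} = \partial Q(\tilde{f}; X)$, this is precisely the desired orthogonality.

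The heart of the argument is condition \ref{item:weak_stability}, i.e. $g(D_X Q, X)|_{\iota(\tilde{f})} \le 0$. The idea is to exploit that the dissipation functional $S(f) = \inn{\partial H(f), Q[f]}$ attains its maximum value $0$ exactly on $\tilde{M}$, so that along the straight line $f_s = \tilde{f} + sX$ (which stays in $U \cap M$ for $s$ small, since $U$ and $M$ are open and contain $\tilde{f}$) the scalar map $s \mapsto S(f_s)$ has a local maximum at $s = 0$. I would then compute its second derivative at $s = 0$: the two terms $\partial^3 H(\tilde{f})(X, X, Q[\tilde{f}])$ and $\inn{\partial H(\tilde{f}), \partial^2 Q(\tilde{f}; X, X)}$ both vanish --- the former because $Q[\tilde{f}] = 0$, the latter because $\partial H(\tilde{f}) \in \bE$ (by \ref{item:H1}) annihilates every $f$-derivative of $Q$ --- leaving $\tfrac{\rd^2}{\rd s^2} S(f_s)|_{s=0} = 2\,\partial^2 H(\tilde{f})(X, \partial Q(\tilde{f}; X))$. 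The second-order maximum condition forces this to be nonpositive, and translating through $g|_{\iota(\tilde{f})} = \partial^2 H(\tilde{f})$ and $D_X Q|_{\iota(\tilde{f})} = \partial Q(\tilde{f}; X)$ yields $g(D_X Q, X)|_{\iota(\tilde{f})} \le 0$.

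The main obstacle I anticipate lies in the bookkeeping of this second-variation computation: one must correctly identify which derivatives of $H$ and $Q$ survive at equilibrium, and justify that $\partial H(\tilde{f}) \in \bE$ kills the term $\partial^2 Q(\tilde{f}; X, X)$, which requires differentiating the defining identity $\inn{\partial H(\tilde{f}), Q[f]} = 0$ twice in $f$ and using that it holds for all $f$, not merely at $\tilde{f}$. A secondary point is to confirm that $s = 0$ is genuinely an interior maximum of $s \mapsto S(f_s)$, so that the second-order necessary condition applies; this rests on the openness of $U \cap M$ together with the strict-equality clause of \ref{item:H2}.
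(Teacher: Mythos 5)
Your proposal is correct and follows essentially the same route as the paper: \cref{eq:stability_1_2} is obtained by differentiating the collision-invariant identity supplied by \cref{eq:H-bE}, and \ref{item:weak_stability} by the second variation of $S$ along the line $\tilde{f} + \kappa h$ at its interior maximum $\kappa = 0$ guaranteed by \ref{item:H2}. The only cosmetic difference is that the paper first factors $S(f^\kappa) = \inn{\partial H(f^\kappa) - \partial H(f^0), Q[f^\kappa] - Q[f^0]}$ (using \ref{item:H1} and $Q[f^0] = 0$) and reads the second derivative off a product of two first-order-vanishing factors, which sidesteps the $\partial^3 H(\tilde{f})(X, X, Q[\tilde{f}])$ term appearing in your term-by-term expansion and thereby avoids invoking third-order differentiability of $H$ beyond what \Cref{def:Hthm} guarantees.
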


\begin{proof}
  For each $\tilde{f} \in \tilde{M}$ and 
  $h \in T_{\iota(\tilde{f})} M$, choose 
  a curve $\kappa \mapsto f^\kappa = f + \kappa h$ on $U \cap M$.
  For each $\tilde{h} \in T_{\tilde{f}} \tilde{M}$, 
  \cref{eq:H-bE} yields that 
  \begin{displaymath}
    0 = \od{}{\kappa} \partial^2 H(\tilde{f}) 
    (\tilde{h}, Q[f^\kappa]) \bigg|_{\kappa = 0} = 
    \partial^2 H(\tilde{f}) (\tilde{h}, D_h Q|_{\iota(\tilde{f})}) = 
    g(D_h Q, \iota_* \tilde{h}) |_{\iota(\tilde{f})},
  \end{displaymath}
  which yields that $\iota_* \pi_* D_h Q |_{\iota(\tilde{f})} = 0$.
  By H-theorem, one has that $\partial H(f^0) \in \bE$, 
  $Q[f^0] \equiv 0$, $S(f^0) = 0$, and thus, 
  \begin{displaymath}
    S(f^\kappa) = \inn{\partial H(f^\kappa), Q[f^\kappa]} = 
    \inn{\partial H(f^\kappa) - \partial H(f^0), 
    Q[f^\kappa] - Q[f^0]} \le S(\tilde{f}) = 0.
  \end{displaymath}
  which yields that 
  \begin{displaymath}
    \frac12 
    \dfrac{\rd^2}{\rd \kappa^2} S(f^\kappa) \bigg|_{\kappa = 0} = 
    \partial^2 H(f^0) \big( \partial Q[f^0] (h), h \big) = 
    g(D_h Q, h) |_{\iota(\tilde{f})} \le 0.
  \end{displaymath}
  Therefore, the proof is completed. 
\end{proof}

As for the property of GSC for the kinetic equation \eqref{eq:gke}, 
we need a stronger version of H-theorem.

\begin{definition}
  \label{def:dH}
  We say that the kinetic equation \cref{eq:gke} satisfies 
  \textit{non-degenerate H-theorem} if it satisfies H-theorem and 
  satisfies that for each $\tilde{f} \in \tilde{M}$ and each 
  $h \in T_{\iota(\tilde{u})} M \setminus 
  \iota_* T_{\tilde{u}} \tilde{M}$, there exists a constant 
  $\lambda > 0$ such 
  that for sufficiently small $|\kappa|$, one has that 
  \begin{displaymath}
    S(\tilde{f} + \kappa h) \le - \lambda |\kappa|^2.
  \end{displaymath}
\end{definition}

The following theorem is straightforward according to the proof of 
\Cref{thm:H->GWSC}.

\begin{theorem}
  \label{thm:dH->GSC}
  Suppose that the Riemannian metric $g$ satisfies 
  \Cref{assump:metric}. If the kinetic equation \eqref{eq:gke} 
  satisfies non-degenerate H-theorem, 
  then it satisfies GSC on $(M, g)$.
\end{theorem}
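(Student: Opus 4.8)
The plan is to reuse the proof of \Cref{thm:H->GWSC} almost verbatim, upgrading only its terminal inequality from $\le 0$ to a strict negative bound by feeding in the extra quantitative estimate supplied by the non-degenerate H-theorem of \Cref{def:dH}. Since GSC and GWSC share the requirements \ref{item:stability_1} and \ref{item:stability_2}, and since the non-degenerate H-theorem contains the H-theorem, I would first observe that all the work done in \Cref{thm:H->GWSC} transfers without change: \cref{eq:stability_1_1} holds by \Cref{rmk:stability_1_1} (using that \eqref{eq:gke} satisfies \Cref{assump:equilibrium} via \Cref{thm:HassumpEqui}), \cref{eq:stability_1_2} follows by differentiating the collision-invariant identity \cref{eq:H-bE} along a curve through $\iota(\tilde{f})$, and \ref{item:stability_2} is just the hyperbolicity guaranteed by the $L^2$-form \cref{eq:kinetic_hyperbolicity} in \Cref{assump:metric}. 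Thus the only genuinely new item to establish is \ref{item:stability_3}.

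For \ref{item:stability_3}, I would take an arbitrary $X \in \Gamma(T M)$ and $\tilde{u} \in \tilde{M}$ with $X|_{\iota(\tilde{u})} \ne 0$ and $\iota_* \pi_* X|_{\iota(\tilde{u})} = 0$, and set $h := X|_{\iota(\tilde{u})}$. Invoking the orthogonal-projection characterization of $\iota_* \pi_*$ from \Cref{rmk:proj}, the vanishing of $\iota_* \pi_* X$ together with $h \ne 0$ places $h$ in the orthogonal complement of $\iota_* T_{\tilde{u}} \tilde{M}$, hence in $T_{\iota(\tilde{u})} M \setminus \iota_* T_{\tilde{u}} \tilde{M}$, which is precisely the set on which the non-degenerate bound of \Cref{def:dH} is asserted. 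Along the curve $\kappa \mapsto \tilde{f} + \kappa h$ used in \Cref{thm:H->GWSC}, that proof already records $S(\tilde{f}) = 0$, $\od{}{\kappa} S|_{\kappa = 0} = 0$, and the identity $\tfrac{1}{2} \tfrac{\rd^2}{\rd \kappa^2} S|_{\kappa = 0} = g(D_X Q, X)|_{\iota(\tilde{u})}$. Combining this Taylor expansion with the non-degenerate estimate $S(\tilde{f} + \kappa h) \le - \lambda |\kappa|^2$ then yields $g(D_X Q, X)|_{\iota(\tilde{u})} \le - \lambda < 0$, which is exactly \ref{item:stability_3}.

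The one place warranting care is the bookkeeping in the previous paragraph: one must check that the Taylor-expansion identity $\tfrac{1}{2} S''(0) = g(D_X Q, X)|_{\iota(\tilde{u})}$ from \Cref{thm:H->GWSC} depends only on $S(\tilde{f}) = 0$ and $S'(0) = 0$, both of which are already furnished by the plain H-theorem part of \Cref{def:dH}, so that no new computation is needed beyond inserting the quadratic bound. The accompanying logical step $\iota_* \pi_* X|_{\iota(\tilde{u})} = 0 \Rightarrow h \notin \iota_* T_{\tilde{u}} \tilde{M}$ is the only inference not already present in \Cref{thm:H->GWSC}, and it is immediate once the orthogonality of the projection is in hand. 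I do not anticipate a substantive obstacle; the essential content is merely that the strict, quantitative form of entropy dissipation at equilibrium converts directly into the strict dissipativity along transversal directions required by GSC.
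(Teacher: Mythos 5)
Your proposal is correct and follows exactly the route the paper intends: the paper states that \Cref{thm:dH->GSC} is ``straightforward according to the proof of \Cref{thm:H->GWSC}'', and your argument simply spells this out, reusing that proof's Taylor expansion $S(\tilde{f})=0$, $S'(0)=0$, $\tfrac12 S''(0)=g(D_X Q, X)|_{\iota(\tilde{u})}$ and inserting the quadratic bound $S(\tilde{f}+\kappa h)\le -\lambda|\kappa|^2$ to obtain the strict inequality in \ref{item:stability_3}. The one inference you flag as new, that $\iota_*\pi_* X|_{\iota(\tilde{u})}=0$ with $X|_{\iota(\tilde{u})}\ne 0$ forces $h\notin \iota_* T_{\tilde{u}}\tilde{M}$, is indeed immediate (since $\iota_*\pi_*$ is the identity on $\iota_* T_{\tilde{u}}\tilde{M}$), so there is no gap.
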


\begin{remark}
  The constant $\lambda$ in \Cref{def:dH} 
  can depend on the point $\tilde{f}$ 
  and the tangent vector $h$, which is often a weak requirement. 
  If there exists a constant $\lambda > 0$ independent of 
  $\tilde{f}$ and $h$ such that 
  \begin{displaymath}
    S(\tilde{f} + \kappa h) \le - \lambda |\kappa|^2 
    \norm{h - \iota_* \pi_* h}^2_{g |_{\iota(\tilde{f})}},
  \end{displaymath}
  then the kinetic equation \eqref{eq:gke} 
  satisfies GUSC on $(M, g)$.
\end{remark}

We can prove the converse part given by the following theorem.

\begin{theorem}\label{thm:GUSC->H}
  Suppose that the Riemannian metric $g$ satisfies 
  \Cref{assump:metric} and that the equilibrium manifold is given by 
  \cref{eq:H-equi}.
  If the kinetic equation \eqref{eq:gke} satisfies the condition 
  \ref{item:uniform_stability} in the definition of GUSC on $(M, g)$, 
  then \cref{eq:gke} satisfies H-theorem. 
\end{theorem}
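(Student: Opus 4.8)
The plan is to treat $H$ (which exists as the data entering \Cref{assump:metric}) as a candidate entropy and to verify the requirements \ref{item:H1} and \ref{item:H2} of \Cref{def:Hthm}, together with the strict convexity of $H$ on a suitable domain. First I would fix the domain $U$. By \cref{eq:kinetic_natural} the Hessian $\partial^2 H(\tilde f) = g(\cdot,\cdot)|_{\iota(\tilde f)}$ is strictly positive definite at every $\tilde f \in \tilde M$, since $g$ is a Riemannian metric; by continuity of $\partial^2 H$ it stays positive definite on an open neighborhood of $\tilde M$, and I would take $U$ to be a convex such neighborhood containing $\tilde M$. Condition \ref{item:H1} is then immediate: the hypothesis \cref{eq:H-equi} states precisely that $\tilde M = \{f \in M : \partial H(f) \in \bE\}$, so $\partial H(\tilde f) \in \bE$ for every $\tilde f \in \tilde M$. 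It thus remains to establish \ref{item:H2}, namely $S(f) := \inn{\partial H(f), Q[f]} \le 0$ on $U \cap M$ with equality exactly on $\tilde M$; this is the core of the argument.

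The idea for \ref{item:H2} is a second-order analysis of $S$ along $\tilde M$, reusing the computation in the proof of \Cref{thm:H->GWSC}. I would first record the exact vanishing at equilibrium: for $\tilde f \in \tilde M$ one has $Q[\tilde f] = 0$, hence $S(\tilde f) = 0$; differentiating $S$ and using $Q[\tilde f] = 0$ together with $\partial H(\tilde f) \in \bE$ (so that $\inn{\partial H(\tilde f), Q[\cdot]} \equiv 0$ and therefore its $f$-derivative vanishes) gives $\partial S(\tilde f) = 0$. Differentiating once more, the terms containing $Q[\tilde f]$ and $\inn{\partial H(\tilde f), \partial^2 Q[\tilde f](\cdot,\cdot)}$ both drop out, leaving for each $h \in T_{\iota(\tilde f)} M$
\[
  \partial^2 S(\tilde f)(h, h) = 2\, \partial^2 H(\tilde f)\big(h, \partial Q[\tilde f](h)\big) = 2\, g(D_h Q, h)\big|_{\iota(\tilde f)},
\]
where I use \cref{eq:kinetic_natural} and the fact, recalled in \Cref{rmk:diff_g}, that at a zero of $Q$ the covariant derivative $D_h Q$ coincides with the ordinary derivative $\partial Q[\tilde f](h)$ and is metric-independent.

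Next I would invoke the stability hypothesis. Condition \ref{item:uniform_stability} supplies a constant $\lambda > 0$ with $g(D_h Q, h)|_{\iota(\tilde f)} \le -\lambda\,\norm{h - \iota_*\pi_* h}_g^2|_{\iota(\tilde f)}$, so $\partial^2 S(\tilde f)$ is negative semidefinite with kernel contained in $\iota_* T_{\tilde f}\tilde M$ (recall from \Cref{rmk:proj} that $\iota_*\pi_*$ is the $g$-orthogonal projection onto $\iota_* T_{\tilde f}\tilde M$, so $\norm{h - \iota_*\pi_* h}_g = \norm{h}_g$ exactly on the normal directions). Combining $S|_{\tilde M} \equiv 0$, $\partial S|_{\tilde M} = 0$, and strict negativity of the transverse Hessian, I would pass to a tubular-neighborhood parametrization of points near $\tilde M$ as pairs $(\tilde f, v)$ with $v$ a normal displacement: for such $v$ one has $\iota_*\pi_* v = 0$, hence $\partial^2_{vv} S(\tilde f, 0)(v,v) \le -2\lambda\norm{v}_g^2$, and a Taylor expansion in $v$ yields $S(\tilde f, v) \le -\lambda\norm{v}_g^2 + o(\norm{v}_g^2) < 0$ for small $v \ne 0$, with equality only when $v = 0$, i.e.\ $f \in \tilde M$. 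Shrinking $U$ to this neighborhood establishes \ref{item:H2}, and hence the H-theorem.

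The main obstacle I anticipate is exactly this final passage from the pointwise second-order information on $\tilde M$ to the global sign statement \ref{item:H2} on a neighborhood: controlling the remainder $o(\norm{v}_g^2)$ uniformly along $\tilde M$, ensuring that $\lambda$ can be taken uniform (which is precisely where the \emph{uniform} stability \ref{item:uniform_stability}, rather than the merely pointwise GSC, is needed), and reconciling the tubular-neighborhood geometry with the convexity of $U$ demanded by \Cref{def:Hthm}. These points are routine when $\tilde M$ is compact or finite-dimensional, but require care in the infinite-dimensional function-space setting of \cref{eq:gke}.
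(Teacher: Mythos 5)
Your proposal is correct and takes essentially the same route as the paper's proof: both establish $S(\tilde f)=0$ and $\partial S(\tilde f)=0$ from \cref{eq:H-equi} (i.e., from $\partial H(\tilde f)\in\bE$ and $Q[\tilde f]=0$), identify $\tfrac12\,\partial^2 S_{\tilde f}(0)(h,h)=g(D_h Q,h)|_{\iota(\tilde f)}$ via \Cref{assump:metric} and the metric-independence of $D_h Q$ at zeros of $Q$, and then conclude \ref{item:H2} on a neighborhood of $\tilde M$ by a quantitative Taylor expansion combined with condition \ref{item:uniform_stability}. The only cosmetic difference is the covering device: you parametrize by strict normal displacements in a tubular neighborhood, whereas the paper works on the fattened cones $N(\tilde f)=\{\,h : \sqrt{2}\,\norm{\iota_*\pi_* h}_g \le \norm{h}_g \le \delta(\tilde f)\,\}$, which tolerate a small tangential component and make the assertion that the sets $\tilde f + N(\tilde f)$ cover a neighborhood of $\tilde M$ slightly more robust --- the same uniformity caveats you flag at the end.
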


\begin{proof}
  For each $\tilde{f} \in \tilde{M}$ and 
  each tangent vector $h \in T_{\iota(\tilde{f})} M$ belonging to 
  a neighborhood of the origin at $T_{\iota(\tilde{f})} M$,
  define $S_{\tilde{f}}(h) := S(\tilde{f} + h)$.
  By \cref{eq:H-equi}, one has that 
  \begin{displaymath}
    S_{\tilde{f}}(h) = \inn{\partial H(\tilde{f} + h), 
    Q[\tilde{f} + h]} 
    = \inn{\partial H(\tilde{f} + h) - 
    \partial H(\tilde{f}), 
    Q[\tilde{f} + h] - Q[\tilde{f}]},
  \end{displaymath}
  which yields that
  \begin{displaymath}
    S_{\tilde{f}}(0) = 0, \quad 
    \partial S_{\tilde{f}}(0) = 0.
  \end{displaymath}
  By Taylor formula, for each $\tilde{f} \in \tilde{M}$, there exists
  $\delta(\tilde{f}) > 0$ dependent on $\tilde{f}$ continuously
  such that for each 
  $h \in T_{\iota(\tilde{f})} M$ satisfying that 
  $\| { h } \|_{g|_{\iota(\tilde{f})}}  \le \delta(\tilde{f})$, 
  one has that 
  \begin{displaymath}
    \bigg| S_{\tilde{f}}(h) - 
    \frac{1}{2} \partial^2 S_{\tilde{f}}(0) (h, h) \bigg|
    \le \frac{1}{4} \lambda \norm{h}_{g|_{\iota(\tilde{f})}}^2,
  \end{displaymath}
  where 
  \begin{displaymath}
    \frac{1}{2} \partial^2 S_{\tilde{f}}(0) (h, h) = 
    g(D_h Q, h) |_{\iota(\tilde{f})} \le - \lambda 
    \norm{h - \iota_* \pi_* h}_{g|_{\iota(\tilde{f})}}^2 ,
  \end{displaymath}
  by the condition \ref{item:uniform_stability}.
  Define 
  \begin{displaymath}
    N(\tilde{f}) := \left\{ 
    h \in T_{\iota(\tilde{f})} M \,\Big|\,
    \sqrt{2} \| \iota_* \pi_* h \|_{g|_{\iota(\tilde{f})}}  
    \le \| h \|_{g|_{\iota(\tilde{f})}} \le \delta(\tilde{f})
    \right\}.
  \end{displaymath}
  For each $h \in N(\tilde{f})$, one has that 
  \begin{displaymath}
    S_{\tilde{f}}(h) \le - \lambda 
    \norm{h - \iota_* \pi_* h}_{g|_{\iota(\tilde{f})}}^2  + 
    \frac{1}{4} \lambda \norm{h}_{g|_{\iota(\tilde{f})}}^2 
    \le - \frac{1}{2} \lambda 
    \norm{h - \iota_* \pi_* h}_{g|_{\iota(\tilde{f})}}^2 \le 0,
  \end{displaymath}
  and the equality sign holds if and only if $h = 0$, i.e., 
  $\tilde{f} + h = \tilde{f} \in \tilde{M}$.
  Note that the set $\bigcup_{\tilde{f} \in \tilde{M}} 
  \big( \tilde{f} + N(\tilde{u}) \big)$ 
  contains an open neighborhood $\tilde{U} \subset M$ of 
  the equilibrium manifold $\tilde{M}$.
  Therefore, \cref{eq:gke} satisfies 
  the condition \ref{item:H2} on the open set $\tilde{U}$,
  which completes the proof.
\end{proof}

\begin{remark}
  These results on the relation between the linear stability 
  conditions and the H-theorem can be applied to 
  \cref{eq:1stonM} with a few modifications, 
  assuming that the manifold $M$ is an 
  immersed submanifold in the topological space $V$. For example, 
  one needs to use exponential mappings to replace the term 
  $\tilde{f} + h$ in the proof of \Cref{thm:GUSC->H}.
  The finite-dimensional version of H-theorem was also studied in 
  \cite{Yong2008interesting}.
\end{remark}

Let us consider several examples of kinetic equations to 
illustrate the discussions in this section. 
For each $\tilde{f} \in \tilde{M}$ and a closed subspace $W$ of the 
tangent space $T_{\iota(\tilde{f})} M$, let us denote the 
orthogonal projection from $T_{\iota(\tilde{f})} M$ to $W$ w.r.t. 
the inner product $g|_{\iota(\tilde{f})} = \partial^2 H(\tilde{f})$ 
by $\cP_{W}$.

\begin{example}
  Consider the Boltzmann equation \eqref{eq:boltzmann} with 
  $\+\Xi = \bR^d$ and $v^j(\+\xi) = \xi^j$. 
  Given the distribution function $f = f(\+\xi) \in M$, define 
  the density $\rho$, flow velocity $\+u$, 
  temperature $\theta$, pressure tensor $P = (p^{i,j})_{d \times d}$, 
  and heat flux $\+q$ by 
  \begin{equation}\label{eq:moments}
    \begin{aligned}
      & \rho := \int_{\bR^d} f(\+\xi) \,\rd \+\xi, \qquad
      \rho \+u := \int_{\bR^d} \+\xi f(\+\xi) \,\rd \+\xi, \qquad
      d \rho \theta := \int_{\bR^d} |\+\xi - \+u|^2 f(\+\xi) 
      \,\rd \+\xi, \\
      & \rho P := \int_{\bR^d} (\+\xi - \+u) \otimes (\+\xi - \+u) 
      f(\+\xi) \,\rd \+\xi, \qquad
      \rho \+q := \int_{\bR^d} |\+\xi - \+u|^2 (\+\xi - \+u) f(\+\xi) 
      \,\rd \+\xi.
    \end{aligned}
  \end{equation}
  We will consider four collision terms, all satisfying
  the H-theorem in \Cref{def:Hthm} with the function $H$ in the form 
  of \cref{eq:kinetic_entropy} with 
  $\eta(\+\xi, f) = \eta(f) = f \log f - f$ and the space of 
  collision invariants defined as 
  \begin{equation}\label{eq:Boltzmann_CI}
    \bE = \mathrm{span} 
    \left\{ 1, \xi^1, \ldots, \xi^d, |\+\xi|^2 \right\}.
  \end{equation}
  At this time, the minimizer of the function $H$ 
  under the constraints imposed by 
  $\rho > 0$, $\+u \in \bR^d$ and $\theta > 0$ takes the form of 
  \begin{displaymath}
    f_{\mathrm{eq}}(\+\xi) := \frac{\rho}{(2 \pi \theta)^{d / 2}} 
    \exp \left( - \frac{|\+\xi - \+u|^2}{2 \theta} \right).
  \end{displaymath}

  Firstly, let us consider the two-body collision term given by 
  \cref{eq:boltzmann_twobody}. Note that 
  \begin{multline*}
    S(f) = - \frac14 \int_{\bR^d} \int_{\bR^d} \int_{S^{d-1}} 
    B(|\+\xi - \+\xi_*|, \cos \chi) \\
    \big( f(\+\xi') f(\+\xi_*') - f(\+\xi) f(\+\xi_*) \big)
    \log \frac{f(\+\xi') f(\+\xi_*')}{f(\+\xi) f(\+\xi_*)}
    \,\rd \+\sigma \,\rd \+\xi_* \,\rd \+\xi \le 0.
  \end{multline*}
  Moreover, the equality sign holds if and only if 
  $f(\+\xi) f(\+\xi_*) = f(\+\xi') f(\+\xi_*')$, which is equivalent 
  to that 
  $\eta'(f) = \log f \in \bE$ by \cite[p. 36--42]{Cercignani}.
  Therefore, the Boltzmann equation with the two-body collision term 
  satisfies H-theorem. One can show that it satisfies the 
  non-degenerate 
  H-theorem and thus satisfies GSC. We can also prove it directly. 
  By \Cref{thm:H->GWSC}, it satisfies GWSC.
  For each $\tilde{f} \in \tilde{M}$, one has that 
  $W_0 := \iota_* (T_{\tilde{f}} \tilde{M}) = 
  \tilde{f} \cdot \bE \subset T_{\iota(\tilde{f})} M$, which yields 
  that $\cP_{W_0} w = \iota_* \pi_* w$ for each 
  $w \in T_{\iota(\tilde{f})} M$. By direct calculation, 
  \begin{multline*}
      g(D_w Q, w) = - \frac14 \int_{\bR^d} 
      \int_{\bR^d} \int_{S^{d-1}} B(|\+\xi - \+\xi_*|, \cos \chi) \\
      \qquad \bigg| \frac{w(\+\xi)}{\tilde{f}(\+\xi)} + 
      \frac{w(\+\xi_*)}{\tilde{f}(\+\xi_*)} - 
      \frac{w(\+\xi')}{\tilde{f}(\+\xi')} - 
      \frac{w(\+\xi_*')}{\tilde{f}(\+\xi_*')} \bigg|^2
      \tilde{f}(\+\xi) \tilde{f}(\+\xi_*)
      \,\rd \+\sigma \,\rd \+\xi_* \,\rd \+\xi \le 0.
  \end{multline*}
  Moreover, the equality sign holds if and only if 
  $w \tilde{f}^{-1} \in \bE$, equivalent to that 
  $w \in W_0$, which yields that the Boltzmann equation with the 
  two-body collision 
  term satisfies GSC \cite{Cercignani}. 

  Secondly, consider the Bhatnagar--Gross--Krook (BGK) collision term:
  \begin{displaymath}
    Q[f] := \frac1\tau (f_{\mathrm{eq}} - f),
  \end{displaymath}
  where $\tau > 0$ is the relaxation time.
  For each $\tilde{f} \in \tilde{M}$, one has that 
  $\tilde{f} = \tilde{f}_{\mathrm{eq}}$ and thus,
  $\partial H(\tilde{f}) = \eta'(\tilde{f}) = \log \tilde{f} \in \bE$.
  Since the equality $D_w Q = \frac{1}{\tau} (\cP_{W_0} w - w)$ holds 
  for each $w \in T_{\iota(\tilde{f})} M$, one has that 
  \begin{displaymath}
    g(D_w Q, w) = \frac{1}{\tau} g(\cP_{W_0} w - w, w) 
    = - \frac{1}{\tau} \|{w - \cP_{W_0} w}\|_g^2.
  \end{displaymath}
  Therefore, the BGK model satisfies H-theorem and GUSC by applying 
  \Cref{thm:H->GWSC,thm:GUSC->H}.

  Thirdly, consider the Shakhov collision term:
  \begin{displaymath}
    Q[f] := \frac1\tau (f_{\mathrm{S}} - f), 
  \end{displaymath}
  where $\mathrm{Pr} > 0$ is the Prandtl number, and
  \begin{displaymath}
    f_{\mathrm{S}}(\+\xi) := f_{\mathrm{eq}}(\+\xi) \left( 1 + 
    \frac{(1 - \mathrm{Pr}) \+q^\top (\+\xi - \+u)}{(d + 2) \theta^2}
    \left( \frac{|\+\xi - \+u|^2}{2 \theta} - \frac{d + 2}{2} \right) 
    \right).
  \end{displaymath}
  For each $\tilde{f} \in \tilde{M}$, one has that 
  $\tilde{f} = \tilde{f}_{\mathrm{S}}$ and thus,
  \begin{displaymath}
    \rho \+q = \int_{\bR^d} |\+\xi - \+u|^2 (\+\xi - \+u) 
    \tilde{f}_S(\+\xi) 
    \,\rd \+\xi = (1 - \mathrm{Pr}) \rho \+q,
  \end{displaymath}
  which yields that $\+q = \+0$. Therefore,
  $\log \tilde{f} = \log \tilde{f}_{\mathrm{eq}} \in \bE$.
  Define 
  \begin{displaymath}
    W_1 := \tilde{f} \cdot \mathrm{span} 
    \left\{ (\xi^j - u^j) \big( |\+\xi - \+u|^2 - (d + 2) 
    \theta \big) 
    \right\}_{j = 1}^d \subset 
    T_{\iota(\tilde{f})} M,
  \end{displaymath}
  which is orthogonal to $W_0$ w.r.t. the inner product 
  $g|_{\iota(\tilde{f})}$. One has that $D_w Q = 
  \frac{1}{\tau} (\cP_{W_0} w + (1 - \mathrm{Pr}) \cP_{W_1} w - w)$
  for each $w \in T_{\iota(\tilde{f})} M$, which yields that 
  \begin{multline*}
    g(D_w Q, w) = - \frac{1}{\tau} \|{ w - \cP_{W_0} w }\|_{g}^2 + 
    \frac{1 - \mathrm{Pr}}{\tau} 
    \|{ \cP_{W_1} (w - \cP_{W_0} w) }\|_{g}^2 \\
    \le - \frac{1}{\tau} \min \{ \mathrm{Pr}, 1 \} 
    \|{ w - \cP_{W_0} w }\|_{g}^2.
  \end{multline*}
  Therefore, the Shakhov model satisfies H-theorem \cite{Shakhov} and 
  GUSC \cite{Bae2023Shakhov} 
  by applying \Cref{thm:H->GWSC,thm:GUSC->H}.

  Finally, consider the Ellipsoidal BGK (ES-BGK) collision term:
  \begin{displaymath}
    Q[f] := \frac{\mathrm{Pr}}{\tau} (f_{\mathrm{G}} - f),
  \end{displaymath}
  where $\mathrm{Pr} \ge \frac{d-1}{d} > 0$ is the Prandtl number, 
  \begin{displaymath}
    f_{\mathrm{G}}(\+\xi) := \frac{\rho}{\sqrt{\det(2 \pi \Lambda)}}
    \exp \left( - \frac12 (\+\xi - \+u)^\top \Lambda^{-1} 
    (\+\xi - \+u) \right),
  \end{displaymath}
  and $\Lambda := \frac{1}{\mathrm{Pr}} \theta I + 
  \left( 1 - \frac{1}{\mathrm{Pr}} \right) P$ 
  is a symmetric and positive definite matrix.
  For each $\tilde{f} \in \tilde{M}$, one has that 
  $\tilde{f} = \tilde{f}_{\mathrm{G}}$ and thus, 
  $\Lambda = \theta I$, which yields that 
  $\log \tilde{f} = \log \tilde{f}_{\mathrm{eq}} \in \bE$.
  Define 
  \begin{multline*}
    W_2 := \mathrm{span} 
    \left\{ d |\xi^j - u^j|^2 - |\+\xi - \+u|^2 
    \right\}_{j = 1}^{d - 1} \\ \oplus 
    \left\{ (\xi^{j_1} - u^{j_1}) \cdot (\xi^{j_2} - u^{j_2}) \right\}
    _{1 \le j_1 < j_2 \le d} \subset T_{\iota(\tilde{f})} M,
  \end{multline*}
  which is orthogonal to $W_0$ w.r.t. the inner product 
  $g|_{\iota(\tilde{f})}$. One has that $D_w Q = 
  \frac{\mathrm{Pr}}{\tau} (\cP_{W_0} w + (1 - \mathrm{Pr}^{-1}) 
  \cP_{W_2} w - w)$ for each $w \in T_{\iota(\tilde{f})} M$, 
  which yields that 
  \begin{multline*}
    g(D_w Q, w) = - \frac{\mathrm{Pr}}{\tau} 
    \|{ w - \cP_{W_0} w }\|_{g}^2 + 
    \frac{\mathrm{Pr} - 1}{\tau} 
    \|{ \cP_{W_2} (w - \cP_{W_0} w) }\|_{g}^2  \\
    \le - \frac{1}{\tau} \min \{ \mathrm{Pr}, 1 \} 
    \|{ w - \cP_{W_0} w }\|_{g}^2.
  \end{multline*}
  Therefore, the ES-BGK model satisfies H-theorem 
  \cite{Andries2000ESBGK} and GUSC \cite{Yun2015ESBGK} by 
  applying \Cref{thm:H->GWSC,thm:GUSC->H}.
\end{example}

\begin{example}
  Now let us consider the quantum Boltzmann equation in the form of 
  \cref{eq:gke} with $\+\Xi = \bR^d$ and $v^j(\+\xi) = \xi^j$.

  Firstly, let us consider the Boltzmann--Fermi collision term for 
  fermions:
  \begin{multline}\label{eq:QB_collision}
      Q[f] := \int_{\bR^d} \int_{S^{d-1}} 
      B(|\+\xi - \+\xi_*|, \cos \chi) \Big( f(\+\xi') f(\+\xi_*') 
      \big( 1 + \eps f(\+\xi) \big) \big( 1 + \eps f(\+\xi_*) \big) \\
      \quad - f(\+\xi) f(\+\xi_*) 
      \big( 1 + \eps f(\+\xi') \big) \big( 1 + \eps f(\+\xi_*') \big) 
      \Big) \,\rd \+\sigma \,\rd \+\xi_*,
  \end{multline}
  where $\eps < 0$ is a constant, $\+\xi'$ and $\+\xi_*'$ are 
  given by \cref{eq:collision} as in the classical case.
  The equation is supplemented with the bound 
  $0 \le f \le - 1 / \eps$.
  The Boltzmann--Fermi model satisfies H-theorem with the 
  function $H$ in the form of \cref{eq:kinetic_entropy} with 
  $\eta(\+\xi, f) = \eta(f) = 
  f \log f - \eps^{-1} (1 + \eps f) \log(1 + \eps f)$
  and the space $\bE$ of collision invariants given by 
  \cref{eq:Boltzmann_CI}. 
  Note that 
  \begin{multline*}
      S(f) = - \frac14 \int_{\bR^d} \int_{\bR^d} \int_{S^{d-1}} 
      B(|\+\xi - \+\xi_*|, \cos \chi) \Big( f(\+\xi') f(\+\xi_*') 
      \big( 1 + \eps f(\+\xi) \big) \big( 1 + \eps f(\+\xi_*) \big) \\
      \quad - f(\+\xi) f(\+\xi_*) \big( 1 + \eps f(\+\xi') \big) 
      \big( 1 + \eps f(\+\xi_*') \big) \Big) 
      \log \frac{f(\+\xi') f(\+\xi_*') \big( 1 + \eps f(\+\xi) \big) 
      \big( 1 + \eps f(\+\xi_*) \big)}
      {f(\+\xi) f(\+\xi_*) \big( 1 + \eps f(\+\xi') \big) 
      \big( 1 + \eps f(\+\xi_*') \big)} \\ 
      \,\rd \+\sigma \,\rd \+\xi_* \,\rd \+\xi \le 0,
  \end{multline*}
  and the equality sign holds if and only if 
  \begin{displaymath}
    f(\+\xi') f(\+\xi_*') \big( 1 + \eps f(\+\xi) \big) 
    \big( 1 + \eps f(\+\xi_*) \big) = f(\+\xi) f(\+\xi_*) 
    \big( 1 + \eps f(\+\xi') \big) \big( 1 + \eps f(\+\xi_*') \big),
  \end{displaymath}
  which is equivalent to that
  $\eta'(f) = \log \frac{f}{1 + \eps f} \in \bE$.
  For each $\tilde{f} \in \tilde{M}$, 
  define $\rho$, $\+u$ and $\theta$ as in \cref{eq:moments}
  and define
  $\phi(\+\xi) := \eta'(\tilde{f}(\+\xi)) = \alpha - \beta 
  |\+\xi - \+u|^2$
  with $\alpha \in \bR$ and $\beta > 0$.
  Let $\theta_{\mathrm{c}} := \frac{1}{(d + 2) \pi} 
  ( - \eps \rho \Gamma(1 + d / 2) )^{\frac{2}{d}}$, 
  which is related to the Fermi energy,
  where $\Gamma$ is the gamma function defined by 
  $\Gamma(z) := \int_{0}^{+\infty} e^{-s} s^{z - 1} \,\rd s$ 
  for $\Re(z) > 0$.
  If $\theta > \theta_{\mathrm{c}}$, then 
  $\tilde{f}(\+\xi) = (\exp(- \phi(\+\xi)) - \eps)^{-1}$;
  if $\theta = \theta_{\mathrm{c}}$, then 
  $\tilde{f}(\+\xi) = - \eps^{-1} \mathbbm{1}_{|\+\xi - \+u|^2 < 
  {(d + 2)\theta_{\mathrm{c}}}} (\+\xi)$, which can be obtained 
  by letting $\alpha = (d + 2) \theta_{\mathrm{c}} \beta$ tend to 
  $+ \infty$. Consider the case that $\theta < \theta_c$ when 
  $\tilde{f}$ lies in the interior of $\tilde{M}$. 
  Note that $W_3 := \iota_* (T_{\tilde{f}} \tilde{M}) = 
  \tilde{f} (1 + \eps \tilde{f}) \cdot \bE \subset 
  T_{\iota(\tilde{f})} M$, which yields 
  that $\cP_{W_3} w = \iota_* \pi_* w$ for each 
  $w \in T_{\iota(\tilde{f})} M$. 
  By direct calculation, 
  \begin{multline*}
    g(D_w Q, w) = \\
    - \frac14 \int_{\bR^d} \int_{\bR^d} \int_{S^{d-1}} 
    B(|\+\xi - \+\xi_*|, \cos \chi) \bigg| 
    \frac{w(\+\xi)}{\tilde{f}(\+\xi) (1 + \eps \tilde{f}(\+\xi))} + 
    \frac{w(\+\xi_*)}{\tilde{f}(\+\xi_*) 
    (1 + \eps \tilde{f}(\+\xi_*))}
    \\ -
    \frac{w(\+\xi')}{\tilde{f}(\+\xi') (1 + \eps \tilde{f}(\+\xi'))} -
    \frac{w(\+\xi_*')}{\tilde{f}(\+\xi_*') 
    (1 + \eps \tilde{f}(\+\xi_*'))}
    \bigg|^2 \tilde{f}(\+\xi) \tilde{f}(\+\xi_*) 
    (1 + \eps \tilde{f}(\+\xi')) (1 + \eps \tilde{f}(\+\xi_*')) \\
    \,\rd \+\sigma \,\rd \+\xi_* \,\rd \+\xi \le 0,
  \end{multline*}
  and the equality sign holds if and only if 
  $w \tilde{f}^{-1} (1 + \eps \tilde{f})^{-1} \in \bE$, 
  which is equivalent to that $w \in W_3$.
  This yields that the Boltzmann--Fermi model satisfies GSC.

  The Boltzmann--Bose collision term for bosons is the same as 
  \cref{eq:QB_collision} but with $\eps > 0$.
  According to the argument of the above example, the 
  Boltzmann--Bose model also satisfies H-theorem.
  For each $\tilde{f} \in \tilde{M}$, define 
  $\phi(\+\xi) := \eta'(\tilde{f}(\+\xi)) = \alpha - \beta 
  |\+\xi - \+u|^2$
  with $\alpha \le - \log \eps$ and $\beta > 0$.
  Define the critical temperature of Bose-Einstein condensation 
  by $\theta_{\mathrm{c}} := (2 \pi d)^{-1} Z(\frac{d}{2} + 1) 
  Z(\frac{d}{2})^{- \frac{2}{d} - 1} (\eps \rho)^{\frac{2}{d}}$
  with $d \ge 2$, where $Z$ is the Riemann zeta function defined by 
  $Z(z) := \sum_{n = 1}^{\infty} n^{-z}$ with $\Re(z) > 1$ and 
  $Z(1) := + \infty$. If $\theta > \theta_{\mathrm{c}}$, then 
  $\tilde{f}(\+\xi) = (\exp(- \phi(\+\xi)) - \eps)^{-1}$ with 
  $\alpha < - \log \eps$; if $\theta \le \theta_{\mathrm{c}}$, then 
  $\tilde{f}(\+\xi) = (\exp(- \phi(\+\xi)) - \eps)^{-1} + \gamma 
  \delta(\+\xi - \+u)$ with $\alpha = - \log \eps$, 
  $\beta^{-1} = 2 d Z(\frac{d}{2}) Z(\frac{d}{2} + 1)^{-1} \theta$
  and $\gamma = \rho - \eps^{-1} \beta^{- \frac{d}{2}} 
  \pi^{\frac{d}{2}} Z(\frac{d}{2}) \ge 0$, 
  where $\delta$ is Dirac delta function.
  One can obtain that the Boltzmann--Bose model also satisfies GSC 
  using the same argument as the previous case.
\end{example}

\subsection{Natural model reduction}
In this part, let us review the motivations and methodologies of 
natural model reduction proposed in \Cref{sec:reduction} 
for the kinetic equation \eqref{eq:gke} and 
demonstrate how the conditions that guarantee specific
structure-preserving properties of reduced models can be verified or 
satisfied. 
Furthermore, we give a posteriori error estimate for the model 
reduction method.

\subsubsection{Motivations and methodologies}
To carry out the model reduction in the ordinate variables $\+\xi$, 
we aim to reduce the solution functions to be defined on 
$\bR \times \Omega \times \{1, \cdots, n\}$
rather than on $\bR \times \Omega \times \+\Xi$, 
where $n$ is a positive integer. 
Suppose that the ansatz for the phase density functions 
takes the form of 
\begin{equation}\label{eq:kinetic_ansatz}
  \hat{f}(\+\xi; t, \+x) = \hat{f}(\+\xi; \+\omega(t, \+x)),
\end{equation}
where the parameters are $\+\omega : \bR \times \Omega \to \bR^n$.
Therefore, the ansatz manifold $\hat{M}$ can be defined as follows,
\begin{displaymath}
  \hat{M} = \left\{ \hat{f} = \hat{f}(\cdot; \+\omega) : 
  \+\Xi \to \bR \,\big|\, 
  \+\omega = (\omega^1, \omega^2, \ldots, \omega^n)^\top \in \bR^n 
  \right\}.
\end{displaymath}
The tangent space $T_{\hat{f}} \hat{M}$ of the manifold $\hat{M}$ at 
$\hat{f}(\cdot; \+\omega)$ is 
spanned by the partial derivatives of $\hat{f}$ w.r.t. the parameters 
$\omega^k$, where $k = 1, 2, \ldots, n$.

Model reduction aims to derive the governing equations for 
$\+\omega$, which should satisfy a system of PDEs in $t$ and $\+x$.
Let us revisit the motivations and 
methodologies of model reduction in the context of the kinetic 
equations given by \cref{eq:gke}. By substituting the ansatz 
\eqref{eq:kinetic_ansatz} into the kinetic equation \eqref{eq:gke}, 
a problem arises that the distribution function $\hat{f}$ does not 
satisfy \cref{eq:gke} since the time derivative   
$\tpd{\hat{f}}{t} = \tpd{\omega^k}{t} \tpd{\hat{f}}{\omega^k}$
lies in the tangent space $T_{\hat{f}} \hat{M}$ while the other term 
$Q[\hat{f}] - \+v(\+\xi) \cdot \nabla_{\+x} \hat{f} = 
Q[\hat{f}] - v^j(\+\xi) \tpd{\omega^\ell}{x^j} 
\tpd{\hat{f}}{\omega^\ell}$ does not necessarily. 
Therefore, it is natural to carry out 
model reduction by using a projection $\bP_{\+\omega}$ from the space 
$V$ to the subspace $T_{\hat{f}} \hat{M}$,
where $\hat{f} = \hat{f}(\cdot; \+\omega)$, and then projecting 
$v^j(\+\xi) \tfrac{\partial \hat{f}}{\partial \omega^\ell}$ and 
$Q[\hat{f}]$ onto $T_{\hat{f}} \hat{M}$. Suppose that 
\begin{displaymath}
  \bP_{\+\omega} \left( v^j(\+\xi) \pd{\hat{f}}{\omega^\ell} \right) 
  = m^{j,k}_{\ell}(\+\omega) \pd{\hat{f}}{\omega^k},
  \quad \bP_{\+\omega} Q[\hat{f}] 
  = q^k(\+\omega) \pd{\hat{f}}{\omega^k}.
\end{displaymath}
Therefore, a reduced model can be given as follows,
\begin{displaymath}
  \pd{\omega^k}{t} + m_\ell^{j,k}(\+\omega) 
  \pd{\omega^\ell}{x^j} = q^k(\+\omega), \quad 
  k = 1, 2, \ldots, n.
\end{displaymath}

\subsubsection{Strucutral properties}
To ensure the hyperbolicity and linear stability of the reduced model 
and to make the projection uniquely determined, we need 
to specify the orthogonality of the projection $\bP_{\+\omega}$ 
w.r.t. the inner product $g|_{i(\hat{f})}$, where 
$\hat{f} = \hat{f}(\cdot; \+\omega)$.
By \Cref{thm:bilinear_hyperbolicity}, the kinetic equation 
\eqref{eq:gke} is hyperbolic w.r.t. the Riemannian metric 
$g|_f = \inn{\cdot, \cdot}_{L^2(\+\Xi, \mu^f)}$, where $\mu^f$ is a 
strictly positive Radon measure on the manifold 
$\+\Xi$ for each $f \in M$.
According to the discussions in \Cref{sec:kinetic_linear}, the 
Riemannian metric $g$ should also satisfy \cref{eq:kinetic_natural},
which determines the measure $\mu^{\tilde{f}}$ for each 
$\tilde{f} \in \tilde{M}$ the ansatz manifold $\hat{M}$ contains the 
equilibrium manifold $\tilde{M}$,
and the projection $\bP_{\+\omega}$ should satisfy that 
$\bP_{\+\omega} Q[\hat{f}(\cdot; \+\omega)] = 0$ if and only if 
$\hat{f} \in \tilde{M}$, to guarantee linear stability.
The reduced model is a symmetric hyperbolic system 
given by \cref{eq:reduce_compact} as follows:
\begin{displaymath}
  \bigg\langle \pd{\hat{f}}{\omega^k}, 
  \pd{\hat{f}}{\omega^{\ell}} 
  \bigg\rangle _{L^2(\+\Xi, \mu^{\hat{f}})} 
  \pd{\omega^{\ell}}{t} + 
  \bigg\langle \pd{\hat{f}}{\omega^k}, v^j 
  \pd{\hat{f}}{\omega^{\ell}} 
  \bigg\rangle _{L^2(\+\Xi, \mu^{\hat{f}})} 
  \pd{\omega^{\ell}}{x^j} = 
  \bigg\langle \pd{\hat{f}}{\omega^k}, 
  \hat{Q} |_{\hat{f}(\+\omega)} 
  \bigg\rangle _{L^2(\+\Xi, \mu^{\hat{f}})}.
\end{displaymath}
By \Cref{thm:reduce_speed}, the reduced model preserves the property 
of finite propagation speed naturally.
For a given quantity 
$c \in C^\infty(M)$, we can refer to \Cref{rmk:reduce_flux} to 
establish that the reduced models preserve the property of $c(f)$ as 
a quantity with flux, a conserved quantity, or an entropy, given the 
existence of a tangent vector $h \in T_{\hat{f}} \hat{M}$ such that 
$\partial c(\hat{f}; \cdot) = 
\inn{h, \cdot}_{L^2(\+\Xi, \mu^{\hat{f}})}$.

\subsubsection{Error estimate}
Let us quantify the approximation error between the 
approximate solution $\hat{f}(\cdot; \+\omega)$ and the exact
solution $f$ to \cref{eq:gke}. 
Our results show that the error can be bounded 
by the accumulation of the approximation residual of tangent vectors 
under appropriate Lipschitz assumptions of the collision terms. Let
$\delta f := \hat{f}(\cdot; \+\omega) - f$ represent the difference 
between the solutions. 
The approximation residual of tangent vectors can be expressed as 
follows,
\begin{displaymath}
  \begin{aligned}
    & \cR[\hat{f}(\cdot; \+\omega)] = \left( \pd{}{t} + \+v(\+\xi) 
    \cdot \nabla_{\+x} \right) \hat{f}(\+\xi; \+\omega) - 
    Q[\hat{f}(\cdot; \+\omega)] \\
    =\ & (\mathrm{id} - \bP_{\+\omega}) \left( \+v(\+\xi) \cdot 
    \nabla_{\+x} \hat{f}(\+\xi; \+\omega) - 
    Q[\hat{f}(\cdot; \+\omega)] \right) 
  \end{aligned}
\end{displaymath}
Suppose that the solutions decay sufficiently fast at infinity to 
avoid boundary terms.
We will consider the error in the norm defined as,
\begin{displaymath}
  \norm{\delta f}_p = \left( \int_{\bR^d} \int_{\+\Xi} |\delta f|^p \,
  \rd \mu(\+\xi) \,\rd \+x \right)^{1 / p}, \quad p \in (1, +\infty).
\end{displaymath}
The following theorem provides a posteriori error estimate. 
\begin{theorem}
  Suppose that for a given $p \in (1, \infty)$, the collisional
  operator $Q$ satisfies the following Lipschitz-type condition,
  \begin{displaymath}
    \int_{\+\Xi} |f_1 - f_2|^{p-1} |Q[f_1] - Q[f_2]| \,\rd \mu(\+\xi) 
    \leq L_Q
    \int_{\+\Xi} |f_1 - f_2|^p \,\rd \mu(\+\xi), \quad \forall f_1, 
    f_2 \in M,
  \end{displaymath}
  where $L_Q$ is a positive constant. The following error estimate 
  holds:
  \begin{equation}\label{eq:error}
    \| \delta f \|_p (T) \le \| \delta f \|_p (0) + \int_0^T
    e^{L_Q (T - \tau)} 
    \| \cR[\hat{f}(\cdot; \+\omega(\tau, \cdot))] \|_p 
    \,\rd \tau.
  \end{equation}
\end{theorem}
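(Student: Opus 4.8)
The plan is to derive a scalar differential inequality for the $L^p$ norm $\norm{\delta f}_p$ and then close it with Grönwall's lemma. First I would write down the equation satisfied by the error. The exact solution obeys $\partial_t f + \+v \cdot \nabla_{\+x} f = Q[f]$, while by the very definition of the residual the ansatz obeys $\partial_t \hat{f} + \+v \cdot \nabla_{\+x} \hat{f} = Q[\hat{f}] + \cR[\hat{f}(\cdot; \+\omega)]$. Subtracting gives
\[
  \partial_t (\delta f) + \+v(\+\xi) \cdot \nabla_{\+x} (\delta f) = \big( Q[\hat{f}] - Q[f] \big) + \cR[\hat{f}(\cdot; \+\omega)].
\]
The strategy is then to test this equation against $p \, |\delta f|^{p-1} \mathrm{sgn}(\delta f)$ and integrate over $\bR^d \times \+\Xi$ against $\rd \mu(\+\xi) \, \rd \+x$, which produces $\tfrac{\rd}{\rd t} \norm{\delta f}_p^p$ on the left-hand side.

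Next I would dispose of the three resulting terms. The transport contribution is $\int_{\bR^d} \int_{\+\Xi} \+v(\+\xi) \cdot \nabla_{\+x} |\delta f|^p \, \rd \mu \, \rd \+x$; since $\+v$ is independent of $\+x$ this equals $\int_{\bR^d} \int_{\+\Xi} \nabla_{\+x} \cdot \big( \+v \, |\delta f|^p \big) \, \rd \mu \, \rd \+x$, which vanishes after integration in $\+x$ under the assumed fast decay at infinity. The collision contribution is bounded directly by the hypothesis, giving $p \int_{\bR^d} \int_{\+\Xi} |\delta f|^{p-1} |Q[\hat{f}] - Q[f]| \, \rd \mu \, \rd \+x \le p L_Q \norm{\delta f}_p^p$. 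For the residual contribution I would apply Hölder's inequality in the pair $(\+x, \+\xi)$ with conjugate exponents $p$ and $p' = p/(p-1)$, using the identity $\norm{|\delta f|^{p-1}}_{p'} = \norm{\delta f}_p^{p-1}$, to obtain the bound $p \, \norm{\delta f}_p^{p-1} \, \norm{\cR[\hat{f}(\cdot; \+\omega)]}_p$.

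Collecting these estimates yields $\tfrac{\rd}{\rd t} \norm{\delta f}_p^p \le p L_Q \norm{\delta f}_p^p + p \, \norm{\delta f}_p^{p-1} \, \norm{\cR}_p$. Writing $y(t) := \norm{\delta f}_p(t)$ and dividing by $p \, y^{p-1}$ on the open set where $y > 0$ reduces this to the linear differential inequality $\tfrac{\rd y}{\rd t} \le L_Q \, y + \norm{\cR}_p$. Multiplying by the integrating factor $e^{-L_Q t}$ and integrating from $0$ to $T$ then gives an estimate of the form \eqref{eq:error}, namely $y(T) \le e^{L_Q T} y(0) + \int_0^T e^{L_Q (T-\tau)} \norm{\cR[\hat{f}(\cdot; \+\omega(\tau, \cdot))]}_p \, \rd \tau$.

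I expect the main obstacle to be the rigorous justification of the renormalization step, i.e. that $t \mapsto \norm{\delta f}_p^p$ is differentiable with the derivative written above. For $p > 1$ the map $s \mapsto |s|^p$ is $C^1$ with derivative $p \, |s|^{p-1} \mathrm{sgn}(s)$, so the formal chain rule is admissible, but one must still argue for solutions of limited regularity; the standard remedy is to regularize, replacing $|\delta f|^p$ by a smooth convex approximation (and $\norm{\delta f}_p$ by $(\norm{\delta f}_p^p + \varepsilon)^{1/p}$ to legitimize the division by $y^{p-1}$ where the norm vanishes), carry out the estimate, and pass to the limit. The integration by parts in $\+x$ and the interchange of $\tfrac{\rd}{\rd t}$ with the spatial–ordinate integral both rely on the decay assumptions already invoked throughout the paper, so once the renormalized identity is secured the remaining steps are routine.
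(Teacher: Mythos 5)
Your proposal follows essentially the same route as the paper's own proof: subtract the two equations, test with $|\delta f|^{p-2}\,\delta f$, annihilate the transport term via decay at infinity, bound the residual term by H\"older and the collision difference by the Lipschitz hypothesis, and close with Gr\"onwall (your remarks on regularizing $|s|^p$ and the division by $y^{p-1}$ only make explicit what the paper leaves implicit). One point worth noting: what Gr\"onwall actually delivers, and what you correctly state, is $\norm{\delta f}_p(T) \le e^{L_Q T}\norm{\delta f}_p(0) + \int_0^T e^{L_Q(T-\tau)} \| \cR[\hat{f}(\cdot;\+\omega(\tau,\cdot))] \|_p \,\rd\tau$, so the absence of the factor $e^{L_Q T}$ on the initial term in the displayed estimate \eqref{eq:error} appears to be a typo in the paper rather than a gap in your argument.
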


\begin{proof}
  Note that the exact solution $f$ and the approximate solution 
  $\hat{f}(\cdot; \+\omega)$ satisfy the following equations, 
  \begin{displaymath}
    \begin{aligned}
      & \pd{f}{t} + \+v(\+\xi) \cdot \nabla_{\+x} f = Q[f], \\
      & \pd{\hat{f}}{t} + 
      \+v(\+\xi) \cdot \nabla_{\+x} \hat{f}(\+\xi; \+\omega) = 
      Q[\hat{f}(\cdot; \+\omega)] + \cR[\hat{f}(\cdot; \+\omega)],
    \end{aligned}
  \end{displaymath}
  respectively. Therefore, 
  \begin{displaymath}
    \pd{(\delta f)}{t} + \+v(\+\xi) \cdot \nabla_{\+x} (\delta f) = 
    \delta Q + \cR[\hat{f}(\cdot; \+\omega)],
  \end{displaymath}
  where $\delta Q = Q[\hat{f}(\cdot; \+\omega)] - Q[f]$.
  One has that 
  \begin{multline*}
    \frac{1}{p} \od{}{t} \int_{\bR^d} \int_{\+\Xi} 
    |\delta f|^p \,\rd \mu(\+\xi) \,\rd \+x + 
    \frac{1}{p} \int_{\bR^d} \int_{\+\Xi} 
    \+v(\+\xi) \cdot \nabla_{\+x} |\delta f|^p 
    \,\rd \mu(\+\xi) \,\rd x \\
    = \int_{\bR^d} \int_{\+\Xi} 
    \left( \cR[\hat{f}(\cdot; \+\omega)] + \delta Q \right) 
    |\delta f|^{p - 2} \delta f \,\rd \mu(\+\xi) \,\rd \+x,
  \end{multline*}
  by multiplying by $|\delta f|^{p - 2} \delta f$ and integrating 
  over $\+\Xi \times \bR^d$.
  The assumption that the solutions decay sufficiently fast at 
  infinity yields that
  \begin{displaymath}
    \int_{\bR^d} \int_{\+\Xi} \+v(\+\xi) \cdot \nabla_{\+x} 
    |\delta f|^p \,\rd \mu(\+\xi) \,\rd x = 0.
  \end{displaymath}
  By H\"older inequality, one has that 
  \begin{displaymath}
    \int_{\bR^d} \int_{\+\Xi} \cR[\hat{f}(\cdot; \+\omega)] 
    |\delta f|^{p-2} \delta f \,\rd \mu(\+\xi) \,\rd \+x \le 
    \|{\cR[\hat{f}(\cdot; \+\omega)]}\|_p \norm{\delta f}_p^{p-1}.
  \end{displaymath}
  The Lipschitz-type condition of the collision operator 
  gives us that 
  \begin{displaymath}
    \int_{\bR^d} \int_{\+\Xi} |\delta f|^{p - 2} 
    \delta f \cdot \delta Q \,\rd \mu(\+\xi) \,\rd \+x \le 
    \int_{\bR^d} \int_{\+\Xi} |\delta f|^{p - 1} \cdot |\delta Q| 
    \,\rd \mu(\+\xi) \,\rd \+x \le L_Q \norm{\delta f}_p^p.
  \end{displaymath}
  Collecting these inequalities together, one has that 
  \begin{displaymath}
    \od{\norm{\delta f}_p}{t} \leq L_Q \norm{\delta f}_p + \|{
    \cR[\hat{f}(\cdot; \+\omega)]}\|_p,
  \end{displaymath}
  and then by Gr\"onwall's inequality, one can obtain 
  \cref{eq:error}. 
\end{proof}

\begin{remark}
  The first term of the error bound \eqref{eq:error}, i.e.,
  $\|\delta f\|_p(0)$, is the approximation error of the initial 
  value, which is known once given the problem under consideration. 
  The second term of the error bound \eqref{eq:error}, in the form of 
  integration, is computable once given the solution $\+\omega$ to 
  the reduced model. Therefore, the error bound \eqref{eq:error} is a 
  posteriori error estimate. It allows us to carry out 
  model reduction without any a priori knowledge of the solution 
  manifold while the error of the reduced model is still
  available. Therefore, in principle, one may adopt the method 
  developed here without assuming that the solution 
  manifold is low-dimensional.
\end{remark}

\subsection{Examples}
\label{sec:example}
We will present some examples showcasing the 
application of model reduction techniques to kinetic equations within 
the proposed framework. These examples are preliminary 
demonstrations intended to provide an initial glimpse into the 
potential of this methodology.

\subsubsection{Entropy minimization principle}

Consider the kinetic equation \eqref{eq:gke} that has an entropy 
$H(f) = \int_{\+\Xi} \eta(f(\+\xi)) \,\rd \mu(\+\xi)$ according to 
\Cref{def:kinetic_entropy}, where $\eta$ is a smooth, strictly convex 
function, and $\mu$ is a strictly positive Radon measure on the 
manifold $\+\Xi$.
According to the entropy minimization principle in \cite{Levermore}, 
let the ansatz manifold $\hat{M}$ be the set of all the 
feasible solutions of the following optimization problem
\begin{displaymath}
	\begin{aligned}
		\min_{f \in M} \ \ & H(f), \\ \text{s.t.} \ \ & 
		\inn{\fc_p, f} := 
    \int_{\+\Xi} m_p(\+\xi) f(\+\xi) \,\rd \mu(\+\xi) = 
		c_p \in \bR, \quad p = 1, 2, \ldots, n,
	\end{aligned}
\end{displaymath}
where $m_p$ is an appropriate function defined on $\+\Xi$ for each 
$p = 1, 2, \ldots, n$ such that the space $\bE$ of collision 
invariants satisfies 
\begin{equation}\label{eq:minimizeEntropyBE}
  \bE \subset 
  \mathrm{span} \left\{ \fc_p \,\big|\, p = 1, 2, \ldots, n \right\}.
\end{equation}
For example, in the Boltzmann equation \eqref{eq:boltzmann}, the 
corresponding optimization problem can be chosen as 
\begin{displaymath}
	\begin{aligned}
		\min_{f \in M} \ \ & 
    H(f) = \int_{\bR^d} f \log f - f \,\rd \+\xi, \\ 
		\text{s.t.} \ \ & 
		\int_{\bR^d} \+\xi^{\+p} f(\+\xi) \,\rd \+\xi = c_{\+p} 
		\in \bR, \quad \+p \in \bN^d, \ |\+p| \le N.
	\end{aligned}
\end{displaymath}
By Karush--Kuhn--Tucker conditions, there exist $n$ constants 
$\alpha^1, \alpha^2, \ldots, \alpha^n$ such that the optimal solution 
$\hat{f}$ satisfies that 
\begin{displaymath}
	\eta'(\hat{f}(\+\xi)) = \alpha^p m_p(\+\xi).
\end{displaymath}
Denote the Legendre transform of $\eta$ as 
$\zeta(z) := \sup_y \big( y z - \eta(y) \big)$. Therefore, 
the ansatz manifold $\hat{M}$ is 
\begin{displaymath}
	\hat{M} = \left\{ \hat{f}(\+\xi) = 
	\zeta' \left( \alpha^p m_p(\+\xi) \right) \,\Bigg|\,
	\alpha^k \in \bR, m_k \hat{f} \in L^1(\+\Xi), 
	\forall k = 1, 2, \ldots, n \right\}.
\end{displaymath}
The tangent space $T_{\hat{f}} \hat{M}$ of $\hat{M}$ at 
$\hat{f} = \zeta' \left( \alpha^p m_p \right)$ is 
\begin{displaymath}
	T_{\hat{f}} \hat{M} = \mathrm{span} \left\{ m_k 
	\zeta'' \left( \alpha^p m_p \right) \,\Bigg|\, 
	k = 1, 2, \ldots, n \right\}.
\end{displaymath}
As shown in \Cref{sec:kinetic_linear},
a natural choice of Riemannian metric is 
\begin{displaymath}
	g|_{i(\hat{f})} = 
	\inn{\cdot, \cdot}_{L^2(\+\Xi, \eta''(\hat{f}(\+\xi)) 
	\,\rd \mu(\+\xi))}.
\end{displaymath}
Using the orthogonal projection w.r.t. the Riemannian 
metric $g$, the model reduction preserves the properties of 
hyperbolicity and linear stability of the kinetic equation. 
The reduced model can be written in 
the compact form \eqref{eq:reduce_compact}, where 
\begin{gather*}
	\mathbf{A}_{k, \ell}^0 = 
	\hat{g}(\tfrac{\partial \hat{f}}{\partial \alpha^k}, 
	\tfrac{\partial \hat{f}}{\partial \alpha^\ell}) = \int_{\+\Xi}
	m_k(\+\xi) m_\ell(\+\xi) \zeta'' \left( \alpha^p m_p(\+\xi) \right)
	\,\rd \mu(\+\xi), \\
	\mathbf{A}_{k, \ell}^j = 
	\hat{g}(\tfrac{\partial \hat{f}}{\partial \alpha^k}, \hat{A}^j
	\tfrac{\partial \hat{f}}{\partial \alpha^\ell}) = 
	\int_{\+\Xi} v^j(\+\xi)
	m_k(\+\xi) m_\ell(\+\xi) \zeta'' \left( \alpha^p m_p(\+\xi) \right)
	\,\rd \mu(\+\xi), \\
	\mathbf{Q}_{k} = 
	\hat{g}(\tfrac{\partial \hat{f}}{\partial \alpha^k}, Q[\hat{f}]) = 
	\int_{\+\Xi} m_k(\+\xi) 
	Q \left[\zeta' \left( \alpha^p m_p(\+\xi) \right) \right] 
	\,\rd \mu(\+\xi).
\end{gather*}
The reduced model can also be written in the conservative form, 
\begin{displaymath}
	\pd{\hat{c}_k}{t} + \pd{\hat{F}^j_k}{x^j} = \mathbf{Q}_k, \quad 
	k = 1, 2, \ldots, n,
\end{displaymath}
where 
\begin{gather*}
	c_k(f) = \int_{\+\Xi} m_k(\+\xi) f(\+\xi) \,\rd \mu(\+\xi), \quad 
	F^j_k(f) = \int_{\+\Xi} v^j(\+\xi) m_k(\+\xi) f(\+\xi) 
	\,\rd \mu(\+\xi), \\
	\hat{c}_k(\hat{f}) = \int_{\+\Xi} m_k(\+\xi) 
	\zeta'(\alpha^p m_p(\+\xi)) \,\rd \mu(\+\xi), \quad 
	\hat{F}^j_k(\hat{f}) = \int_{\+\Xi} v^j(\+\xi) 
	m_k(\+\xi) \zeta'(\alpha^p m_p(\+\xi)) \,\rd \mu(\+\xi).
\end{gather*}
Here $\+F_k$ is the flux of $c_k$ for \cref{eq:gke} 
according to \Cref{def:flux}, which is preserved by the model 
reduction by \Cref{thm:reduce_flux} and \Cref{rmk:reduce_flux}, 
since 
\begin{displaymath}
	\inn{\rd c_k |_{i(\hat{f})}, h} = 
	\int_{\+\Xi} m_k(\+\xi) h(\+\xi) \,\rd \mu(\+\xi) = 
	g \big( m_k \zeta'' \left( \alpha^p m_p \right),
	h \big) \big|_{i(\hat{f})},
\end{displaymath}
for each $h \in T_{i(\hat{f})} M$, where 
$m_k \zeta'' \left( \alpha^p m_p \right) \in T_{\hat{f}} \hat{M}$.
By \cref{eq:minimizeEntropyBE}, the reduced model preserves all the 
conserved quantities of the kinetic equation.
Furthermore, the entropy $H(f)$ is also preserved by the model 
reduction since 
\begin{displaymath}
	\rd H |_{i(\hat{f})} = \alpha^k \rd \hat{c}_k |_{i(\hat{f})} = 
	g \big( \alpha^k m_k \zeta'' \left( \alpha^p m_p \right),
	\cdot \big) \big|_{i(\hat{f})},
\end{displaymath}
and $\alpha^k m_k \zeta'' \left( \alpha^p m_p \right) 
\in T_{\hat{f}} \hat{M}$. 
However, this model is known to be 
impractical for numerical simulations since 
the coefficients in \cref{eq:reduce_compact} are typically expensive 
to compute, and it is challenging to obtain the bijection between the 
coefficients $\alpha^p$ and the moments $c_p$, which involves 
solving a nonlinear equation with a large condition number.

\subsubsection{Polynomial multiplicative perturbation around Maxwellian}
We would like to consider the Boltzmann equation 
\eqref{eq:boltzmann}.
As mentioned previously, the Boltzmann equation has an entropy 
$H(f) = \int_{\bR^d} \eta(f(\+\xi)) \,\rd \+\xi$ where 
$\eta(f) = f \log f - f$. The celebrated H-theorem says 
\begin{displaymath}
	\int_{\bR^d} Q[f] \eta'(f(\+\xi))  \,\rd \+\xi = 
	\int_{\bR^d} Q[f] \log f \,\rd \+\xi \le 0, 
\end{displaymath}
and the equality sign holds if and only if $Q[f] = 0$, or 
equivalently, the function $f$ is a Maxwellian.
The collision term tends to push the solutions toward 
the Maxwellian distribution.
Let the ansatz manifold $\hat{M}$ be the set of distribution 
functions $\hat{f}$, which are polynomial multiplicative 
perturbations 
around local Maxwellian with the same flow velocity $\+u$ and 
temperature $\theta$ as the distribution function $\hat{f}$. 
In other words, 
\begin{multline*}
	\hat{M} = \Bigg\{ \hat{f}(\+\xi) = 
	\frac{\rho}{(2 \pi \theta)^{d / 2}} 
  \exp \left( - \frac{|\+\xi - \+u|^2}{2 \theta} \right) 
	\sum_{\+k \in \bN^d, |\+k| \le N} \hat\alpha^{\+k} \+\xi^{\+k} 
	\,\Bigg|\, \\
	\int_{\bR^d} \hat{f}(\+\xi) \,\rd \+\xi = \rho, 
	\int_{\bR^d} \+\xi \hat{f}(\+\xi) \,\rd \+\xi = \rho \+u, 
	\int_{\bR^d} |\+\xi - \+u|^2 \hat{f}(\+\xi) \,\rd \+\xi = 
	d \rho \theta \Bigg\}.
\end{multline*}
Such ansatz can also be found in \cite{Fan_new,Fan2015}, 
but yields a different reduced model. 
The key difference is which vector bundle of the ansatz 
manifold $\hat{M}$ is projected onto, and we will discuss the 
connections between these models in detail in our future work. 
Define an auxiliary variable 
$\+w = \tfrac{\+\xi - \+u}{\sqrt{\theta}}$ 
and $d$-dimensional Hermite polynomials $H_{\+k}(\+w)$ 
satisfying that 
\begin{displaymath}
	\int_{\bR^d} H_{\+k_1}(\+w) H_{\+k_2}(\+w) \frac{1}{(2 \pi)^{d/2}}
	\exp \left( -\frac12 |\+w|^2 \right) \,\rd \+w = \+k_1 ! 
	\delta_{\+k_1, \+k_2}.
\end{displaymath}
Let $P(\+w) = \tfrac{1}{(2 \pi)^{d/2}} 
\exp \left( -\tfrac12 |\+w|^2 \right) 
\sum_{|\+k| \le N} \alpha^{\+k} H_{\+k}(\+w)$, and 
then the manifold $\hat{M}$ can be written as 
\begin{displaymath}
	\hat{M} = \left\{ \hat{f}(\+\xi) = \frac{\rho}{\theta^{d / 2}} 
	P(\+w) \,\Bigg|\, \alpha^{\+0} = 1,
	\alpha^{\+e_1} = \cdots = \alpha^{\+e_d} = 
	\sum_{k = 1}^d \alpha^{2 \+e_k} = 0 \right\}.
\end{displaymath}
Therefore, the tangent space of $\hat{M}$ at $\hat{f}$ is 
\begin{multline*}
	T_{\hat{f}} \hat{M} = \mathrm{span} \left\{ \tpd{\hat{f}}{\rho}; 
	\tpd{\hat{f}}{u^j}; \tpd{\hat{f}}{\theta} ;
	\tpd{\hat{f}}{\alpha^{2 \+e_1}}; \ldots; 
  \tpd{\hat{f}}{\alpha^{2 \+e_{d-1}}};
	\tpd{\hat{f}}{\alpha^{\+k}}, 2 \le |\+k| \le N, 
	\+k \ne 2 \+e_j \right\} \\
	= \mathrm{span} \bigg\{ P(\+w); \tpd{P(\+w)}{w^j}; 
	\+w \cdot \nabla_{\+w} P(\+w); 
	\exp \left(- \tfrac12 |\+w|^2 \right) 
	\left( H_{2\+e_1}(\+w) - H_{2\+e_d}(\+w) \right); \\ 
	\ldots; \exp \left(- \tfrac12 |\+w|^2 \right) 
	\left( H_{2\+e_{d-1}}(\+w) - H_{2\+e_d}(\+w) \right); \\
	\exp \left(- \tfrac12 |\+w|^2 \right) H_{\+k}(\+w), 
	2 \le |\+k| \le N, \+k \ne 2 \+e_j \bigg\} \\
	= \exp \left(- \tfrac12 |\+w|^2 \right) \cdot 
	\mathrm{span} \Bigg\{ 1; w^j + \sum_{|\+k| = N} 
	\alpha^{\+k} H_{\+k + \+e_j}(\+w); 
	|\+w|^2 + \sum_{|\+k| = N} \sum_{j = 1}^d
	\alpha^{\+k} w^j H_{\+k + \+e_j}(\+w); \\
	H_{2\+e_1}(\+w) - H_{2\+e_d}(\+w); \ldots; 
	H_{2\+e_{d-1}}(\+w) - H_{2\+e_d}(\+w); 
	H_{\+k}(\+w), 2 \le |\+k| \le N, \+k \ne 2 \+e_j
	\Bigg\}.
\end{multline*}
The Riemannian metric $g$ is chosen as 
\begin{displaymath}
	g |_{i(\hat{f})} = \inn{\cdot, \cdot}_{L^2(\+\Xi, 
	\exp (\tfrac12 |\+w|^2) \,\rd \mu(\+\xi))}.
\end{displaymath}
The reduced model satisfies conservation of mass since 
$\exp(-\tfrac12 |\+w|^2) \in T_{\hat{f}} \hat{M}$. However, the 
reduced model does not satisfy conservation of momentum and energy
since $w^j \exp(-\tfrac12 |\+w|^2)$ and 
$|\+w|^2 \exp(-\tfrac12 |\+w|^2)$ do not necessarily lie in the 
tangent space $T_{\hat{f}} \hat{M}$.

\subsubsection{A conservative one-dimensional moment model}
Let us consider the one-dimensional Boltzmann equation. 
We would like to derive
a reduced moment model, which satisfies conservation of mass, 
momentum, and energy, and can be written as a hyperbolic system of 
balanced laws. 

Let the ansatz manifold $\hat{M}$ be 
\begin{multline*}
	\hat{M} = \Bigg\{ \hat{f}(\xi) = 
	\exp \left( - \frac{|\xi - u|^2}{2 \theta} \right) 
	\sum_{k = 0}^N \alpha^k 
	\xi^k \,\Bigg|\, \\ u \in \bR ; \theta > 0 ; \alpha^k \in \bR, 
	k = 0, 1, \ldots, N \Bigg\},
\end{multline*}
which is an $(N + 3)$-dimensional manifold.
We remove the previous requirement on the momentum and energy 
of the Maxwellian in the definition of $\hat{M}$. The tangent space 
is 
\begin{displaymath}
	T_{\hat{f}} \hat{M} = \exp \left( - \frac{|\xi - u|^2}{2 \theta} 
	\right) \cdot \mathrm{span} \left\{ \xi^k \,\big|\, 
	k = 0, 1, \ldots, N + 2 \right\}.
\end{displaymath}
The Riemannian metric $g$ is chosen as 
\begin{displaymath}
	g |_{i(\hat{f})} = \inn{\cdot, \cdot}_{L^2(\+\Xi, 
	\exp (\tfrac{|\xi - u|^2}{2 \theta}) \,\rd \mu(\+\xi))}.
\end{displaymath}
Therefore, the quantities 
$c_k(\hat{f}) = \int_{\bR} \xi^k \hat{f}(\xi) \,\rd \xi$ 
with $k = 0, 1, \ldots, N + 2$ have fluxes for the reduced system.
The corresponding fluxes are 
$F_k(\hat{f}) = \int_{\bR} \xi^{k+1} \hat{f}(\xi) \,\rd \xi$.
With a bijection between the coefficients involved in the ansatz and 
the moments $c_k$, the reduced model can be written as a 
hyperbolic system of balanced laws w.r.t. the moments $c_k$.
The coefficients of \cref{eq:reduce_compact} here can be 
written explicitly.

\section{Conclusions}
\label{sec:concl}
Our framework of model reduction 
employs the projection onto the tangent bundle to reduce 
kinetic equations while preserving fundamental properties, including
hyperbolicity, conservation laws, entropy dissipation, finite 
propagation speed, and linear stability. 
We investigate these properties of kinetic equations within the novel 
framework, determining the choice of Riemannian metric.
This framework unifies the studies on kinetic equations and reduced 
models, provides a theoretical guarantee for model reduction 
from a general perspective,
and applies to reducing various kinetic models.
This framework solves the problems regarding 
the theoretical analysis of 
properties of many reduced models of kinetic equations at once.
The approach presented in this paper holds potential for application in various high-dimensional problems.

The framework proposed in this paper can give a reduced model once 
given the ansatz manifold and a Riemannian metric. Furthermore, we 
can also approximate the tangent bundle by a vector bundle.
It should be noted that the currently presented framework cannot 
include some of the reduced models we mentioned in the introduction, 
such as HME and QMOM, which can be 
included in a broader framework that utilizes the projection onto 
the vector bundle, which approximates the tangent bundle. 
The approach demonstrated in this paper can be 
generalized to the analysis of the properties of this broader 
framework.

Future research directions for this model reduction framework 
encompass both theoretical and practical avenues. On the theoretical 
front, exploring the preservation of other structures, such as 
nonlinear stability around equilibrium states, formulating boundary 
conditions, and (generalized) Kreiss conditions arising in the 
boundary value problems, offers promising prospects. On the 
side of applications, leveraging insights into distribution function 
solutions to kinetic equations in specific problems or physical 
scenarios opens doors to proposing improved reduced models that may 
yield better numerical simulations. Additionally, 
innovative machine learning techniques can provide the ansatz 
manifold in this model reduction framework, potentially 
yielding enhanced outcomes for specific problems.

\appendix

\section{Proof of \Cref{thm:bilinear}}
\label{app:kinetic_properties}
Let us first introduce Nachbin's theorem, which is a variant of 
Stone--Weierstrass theorem for $C^\infty (\Xi)$.

\begin{lemma}[Nachbin's theorem]
  \label{lemma:nachbin}
  Let $S$ be a subalgebra of the algebra $C^\infty(\Xi)$. 
  If for each $(\+\xi_1, w_1), (\+\xi_2, w_2) \in T \Xi$, there 
  exists $f \in S$ such that 
  $(f(\+\xi_1), \inn{\partial f |_{\+\xi_1}, w_1}) \ne 
  (f(\+\xi_2), \inn{\partial f |_{\+\xi_2}, w_2})$, 
  then $S$ is dense in $C^\infty(\Xi)$. 
\end{lemma}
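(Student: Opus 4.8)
The plan is to treat this as a smooth analogue of the Stone--Weierstrass theorem and to show that the closed subalgebra generated by $S$ exhausts $C^\infty(\Xi)$. First I would pass to the closure $\overline{S}$ in the $C^\infty$ topology, namely uniform convergence of the function together with all its derivatives on every compact set. Since addition and multiplication are jointly continuous for this topology, $\overline{S}$ is again a subalgebra, and it trivially inherits the $1$-jet separation hypothesis since $S \subset \overline{S}$; it therefore suffices to prove $\overline{S} = C^\infty(\Xi)$. (One should also assume $S$ is unital, or at least has no common zero, since otherwise no constant could be approximated near a common zero; the hypothesis already yields point separation by taking $w_1 = w_2 = 0$ and $\+\xi_1 \neq \+\xi_2$.)

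The analytic heart is a \emph{composition--closure} property: for $f_1, \dots, f_k \in \overline{S}$ and any $\Phi \in C^\infty(\bR^k)$ one has $\Phi(f_1, \dots, f_k) \in \overline{S}$. To establish it I would approximate $\Phi$, together with all its partial derivatives and uniformly on a neighborhood of the compact set $(f_1, \dots, f_k)(K)$, by polynomials $P_m$ in $k$ variables, i.e. the Weierstrass theorem upgraded to all derivatives (for instance by mollifying Bernstein polynomials). Substituting, $P_m(f_1, \dots, f_k)$ is a polynomial in the $f_i$ and hence lies in $\overline{S}$, while the multivariate chain rule (Fa\`a di Bruno) shows $P_m(f_1, \dots, f_k) \to \Phi(f_1, \dots, f_k)$ in the $C^\infty$ topology over each compact $K$. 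This step replaces the lattice operations $\max/\min$ used in the classical continuous Stone--Weierstrass proof, which are unavailable in the smooth category, and I expect it to be the main obstacle, since all derivatives must be controlled uniformly on compacta simultaneously.

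With composition--closure in hand I would localize using the $1$-jet separation. Near any point $p \in \Xi$ the tangent-separating part of the hypothesis (taking $\+\xi_1 = \+\xi_2 = p$ and $w_2 = 0$) shows that the differentials $\{\,\partial f|_p : f \in \overline{S}\,\}$ have no common kernel, so one may select $f_1, \dots, f_n \in \overline{S}$, $n = \dim \Xi$, with linearly independent differentials at $p$; then $(f_1, \dots, f_n)$ is a local diffeomorphism near $p$ and every $g \in C^\infty(\Xi)$ equals $\Phi(f_1, \dots, f_n)$ locally for a smooth $\Phi$, whence $g \in \overline{S}$ near $p$. To globalize I would assemble point- and tangent-separating families into an injective immersion $F = (f_1, \dots, f_N): \Xi \to \bR^N$ with components in $\overline{S}$; if $F$ is arranged to be a proper embedding, then every $g \in C^\infty(\Xi)$ factors as $g = G \circ F$ with $G \in C^\infty(\bR^N)$, and composition--closure yields $g \in \overline{S}$. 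The delicate points are the properness and, for non-compact $\Xi$, the passage from the local conclusion to a global one; organizing the globalization through a single embedding is meant precisely to avoid having to exhibit partition-of-unity bump functions inside $\overline{S}$.

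As an alternative I would note the duality route: $C^\infty(\Xi)$ is a Fr\'echet space whose continuous dual consists of the compactly supported distributions, so by Hahn--Banach density is equivalent to showing that any such distribution $T$ annihilating $S$ must vanish. Since $S$ is an algebra, $T$ annihilates every product of its elements, and one would use the $1$-jet separation to kill $T$ on its compact support order by order. I expect this route to concentrate the same essential difficulty in the step that converts jet separation into the vanishing of the finite-order distribution $T$.
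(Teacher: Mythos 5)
The paper offers no proof of this statement to compare against: it is quoted as Nachbin's theorem, a classical $C^\infty$ variant of Stone--Weierstrass, and used as a black box in \Cref{app:kinetic_properties} to prove \Cref{thm:bilinear}. What can be judged is whether your reconstruction is a sound proof of the classical result, and in outline it is: passing to the closed subalgebra $\overline{S}$, establishing closure under smooth functional calculus $\Phi(f_1,\dots,f_k)$ via polynomial approximation in the $C^\infty$ topology (Bernstein polynomials of a smooth function do converge together with all derivatives on a cube, so this step is solid), extracting local coordinates from the tangent-separation part of the hypothesis, and factoring an arbitrary $g$ through a map with components in $\overline{S}$ is precisely the standard route; the lattice operations of the continuous Stone--Weierstrass proof are indeed unavailable and functional calculus is the correct replacement.

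Two remarks. First, your caveat about unitality is not pedantry but a genuine correction to the lemma as printed: the displayed $1$-jet separation yields point separation and tangent-vector separation, but not the non-vanishing condition (for each $p$ some $f \in S$ with $f(p) \ne 0$) that Nachbin's theorem requires. Concretely, $S = \{ f \in C^\infty(\bR) \mid f(0) = 0 \}$ contains $f(x) = x$, which alone witnesses the stated separation for every pair of distinct elements of $T\bR$, yet $S$ is a proper closed subalgebra, so the statement is false without non-vanishing. The application in the appendix is unharmed, since the algebra generated by $\{v^j\}$ is defined with $k \ge 0$ and hence contains the constant $1$. Second, the globalization difficulty you flag (properness of the embedding on non-compact $\Xi$) dissolves once you recall that density in the Fr\'echet topology of $C^\infty(\Xi)$ only requires approximation in $C^m(K)$ for each compact $K$ and each order $m$: it suffices to pick finitely many elements of $S$ whose joint map is an injective immersion on a compact neighborhood of $K$ (the usual compactness argument covering $K \times K$ away from the diagonal, plus immersivity near it), which on a compact set is automatically an embedding, and then to factor $g$ through it by a Whitney extension or a tubular-neighborhood retraction. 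With that localization your argument closes; the Hahn--Banach alternative you sketch is also viable but, as you yourself observe, relocates rather than removes the essential work of converting jet separation into the vanishing of a finite-order compactly supported distribution.
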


Under the assumption that $\+v \in C^\infty(\Xi, \bR^d)$ is a smooth 
injective immersion, one can obtain that the mapping 
\begin{displaymath}
  (\+\xi, w) \in T \Xi \mapsto (\+v(\+\xi), 
  \inn{\partial \+v|_{\+\xi}, w}) \in \bR^d \times \bR^d
\end{displaymath}
is injective. 
Let $S$ be the subalgebra of the algebra $C^\infty(\Xi)$ generated 
by $\{ v^j \}_{j = 1}^d$, i.e., 
\begin{displaymath}
  S = \mathrm{span} \left\{ v^{j_1} v^{j_2} \cdots v^{j_k} 
  \,\big|\, k \ge 0;\ j_1, j_2, \ldots, j_k = 1, 2, \ldots, d 
  \right\}.
\end{displaymath}
The subalgebra $S$ satisfies the assumption of 
\Cref{lemma:nachbin}, which yields that 
$S$ is dense in $C^\infty(\Xi)$.

\begin{proof}[Proof of \Cref{thm:bilinear}]
  Note that the direction \ref{item:bilinear_2} $\Rightarrow$ 
  \ref{item:bilinear_1} is straightforward. Let us prove 
  \ref{item:bilinear_1} $\Rightarrow$ \ref{item:bilinear_2}. 
  
  For each $h \in V \subset C^\infty(\Xi)$, 
  choose $\psi \in V$ such that $\psi \in [0, 1]$,
  and $\psi = 1$ on $\mathrm{supp}(h)$. Define 
  \begin{displaymath}
    \inn{\cA, h} = a(h, \psi).
  \end{displaymath}
  We assert that $\cA$ is well-defined. In other words, its 
  definition is 
  independent of the choice of $\psi$. Suppose that $\psi_1$ and 
  $\psi_2$ satisfy the conditions above. 
  Note that there exists a 
  sequence $\{ \phi_m \}_{m \in \bN}$ in $S$ such that $\phi_m$ 
  converges to $h$ in $C^\infty(\Xi)$ as $m$ tends to infinity.
  By \ref{item:bilinear_1}, the following equality holds,
  \begin{displaymath}
    a(\phi_m \psi_1, \psi_1 - \psi_2) = 
    a(\psi_1, \phi_m (\psi_1 - \psi_2)),
  \end{displaymath}
  and by letting $m$ tend to infinity, one obtains that 
  \begin{displaymath}
    a(h, \psi_1 - \psi_2) = 
    a(h \psi_1, \psi_1 - \psi_2) = 
    a(\psi_1, h (\psi_1 - \psi_2)) = 0.
  \end{displaymath}
  Assert that $\cA \in V^*$. Suppose that $\{ h_m \}_{m \in \bN}$
  is a sequence in $V$ such that $h_m$ converges to $h \in V$ as $m$ 
  tends to infinity. By definition, there exists a compact set 
  $K \subset \Xi$ such that $h_m$ and $h$ vanish out of $K$.
  Choose $\psi \in V$ such that $\psi \in [0, 1]$, and $\psi = 1$ on 
  $K$. Therefore, 
  \begin{displaymath}
    \inn{\cA, h_m} = a(h_m, \psi) \to 
    a(h, \psi),
  \end{displaymath}
  in $V$ as $m$ tends to infinity. Let us prove 
  \ref{item:bilinear_2}.
  For each $h_1, h_2 \in V$, choose $\psi \in V$ such that 
  $\psi \in [0, 1]$, and $\psi = 1$ on 
  $\mathrm{supp}(h_1) \cup \mathrm{supp}(h_2)$.
  There exists a sequence $\{ \phi_m \}_{m \in \bN}$ in $S$ such that 
  $\phi_m$ converges to $h_1$ in $C^\infty(\Xi)$, and thus, 
  $\phi_m \psi$ and $\phi_m h_2$ converge to $h_1$ and $h_1 h_2$ in 
  $V$, respectively, as $m$ tends to infinity. Note that 
  \begin{displaymath}
    a(\phi_m \psi, h_2) = a(\psi, \phi_m h_2),
  \end{displaymath}
  which yields that 
  \begin{displaymath}
    a(h_1, h_2) = a(\psi, h_1 h_2) = 
    \inn{\cA, h_1 h_2},
  \end{displaymath}
  by letting $m$ tend to infinity.
\end{proof}

  \bibliographystyle{acm}
  \bibliography{ref}


\end{document}